\documentclass[12pt]{amsart}

\usepackage{amssymb}
\usepackage{graphicx}
\usepackage{multicol}
\usepackage{mathrsfs}


\topmargin = .25in
\headheight = .25in
\headsep = .25in
\oddsidemargin = .5in
\evensidemargin = .5in
\textwidth = 5.5in
\textheight = 8.5in
\footskip = .5in


\newtheorem{thm}{Theorem}[section]

\newtheorem{lemma}[thm]{Lemma}


\keywords{Taxicab geometry, Conic Sections}
\subjclass[2010]{51M05, 51M15}

\title{A new perspective on taxicab conic sections}
\author{Emily Frost,
Dylan Helliwell,
Suki Shergill
}
\address{Department of Mathematics, 
Seattle University.}
\email{helliwed(at)seattleu.edu}
\date{\today}
\thanks{Emily Frost and Suki Shergill were supported by Seattle University's College of Science and Engineering Summer Undergraduate Research Program in 2011 and 2018 respectively.}

\begin{document}

\begin{abstract}
We explore taxicab conic sections from the perspective of slicing taxicab cones by planes, as opposed to the more well-studied approach from the perspective of distance formulations.  After establishing a significant amount of structural framework, a complete characterization of the resulting taxicab conic sections is established, and a number of special cases are explored.
\end{abstract}

\maketitle

\section{Introduction}

In Euclidean space, conic sections are classically known and well studied.  The name ``conic section'' comes from the fact that they are realized exactly as the intersections of circular cones and planes.  From this geometric origin, formulations in terms of distance arise, and from these, the familiar algebraic representations of the curves are derived.

In $\mathbb{R}^2$, the taxicab distance between two points $x$ and $y$ is given by
\[
d(x,y) = |x_1 - y_1| + |x_2 - y_2|
\]
and, using this as an alternative to Euclidean distance, conic sections have been studied from the perspective of the distance formulations mentioned above:  starting with the distance formula for a given conic section, switch to taxicab distance and explore what objects arise.  This is a fruitful exercise and a number of interesting shapes and special cases emerge from the analysis.  See \cite{Krause, Reynolds, Laatsch, KAGO} for examples of this analysis, incorporating both two-foci definitions and focus-directrix definitions of taxicab conics.  Figure~\ref{distanceconicsfig} shows the variety of shapes that can arise from these definitions.

\begin{figure}
\begin{picture}(250,500)
\put(10,425){
\includegraphics[scale = .4, clip = true, draft = false]{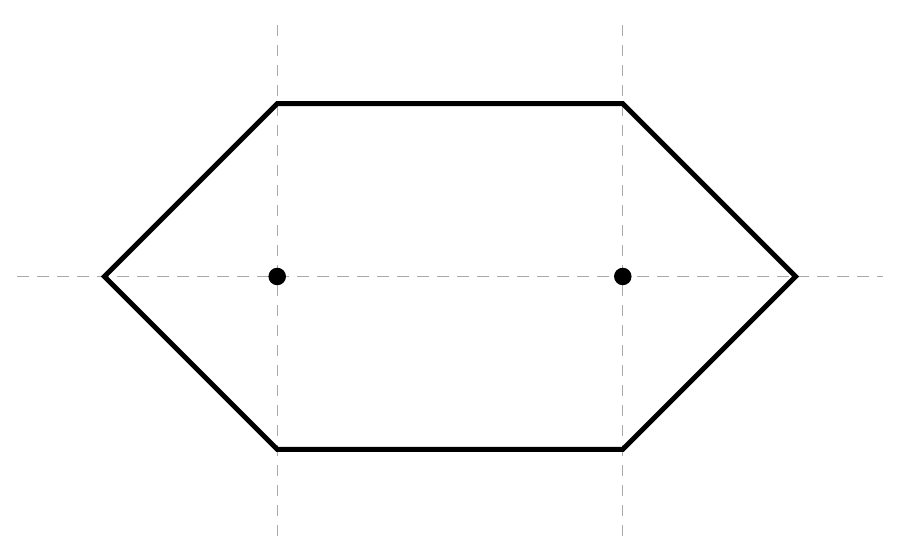}
}
\put(145,415){
\includegraphics[scale = .4, clip = true, draft = false]{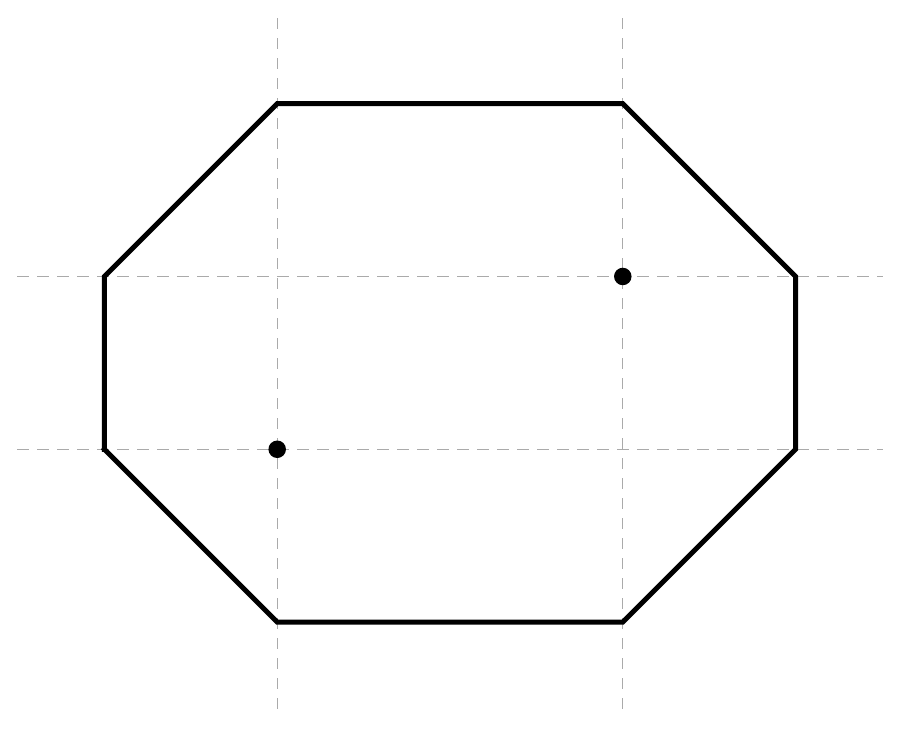}
}
\put(-50,320){
\includegraphics[scale = .4, clip = true, draft = false]{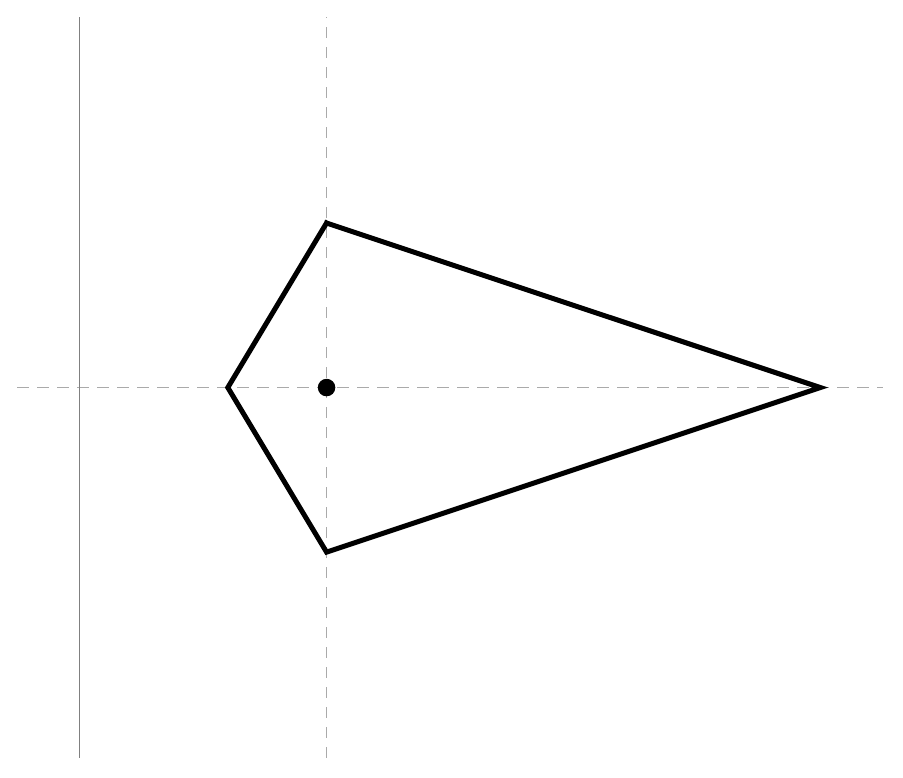}
}
\put(60,325){
\includegraphics[scale = .4, clip = true, draft = false]{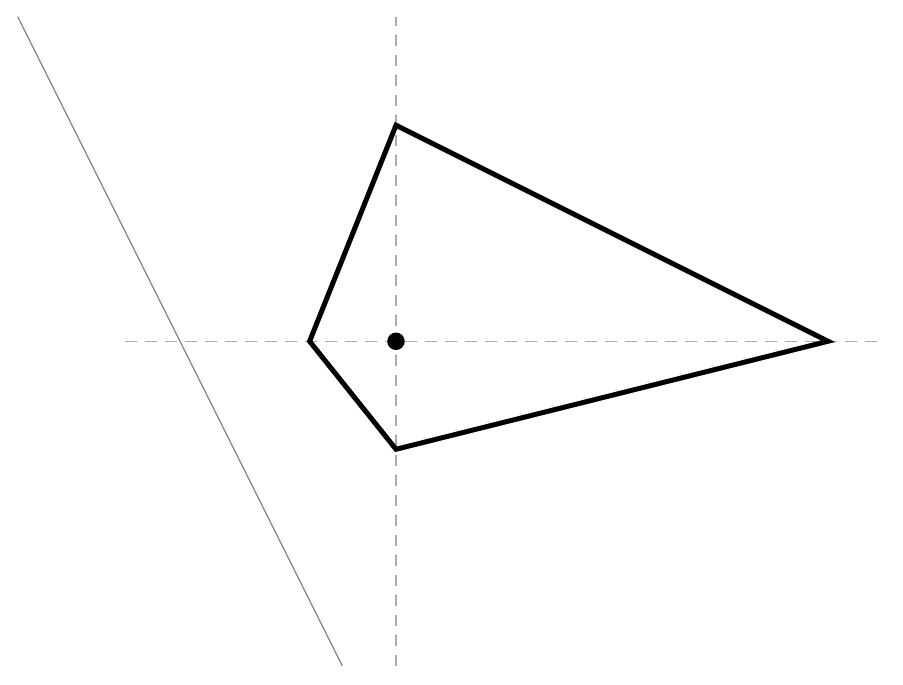}
}
\put(190,310){
\includegraphics[scale = .4, clip = true, draft = false]{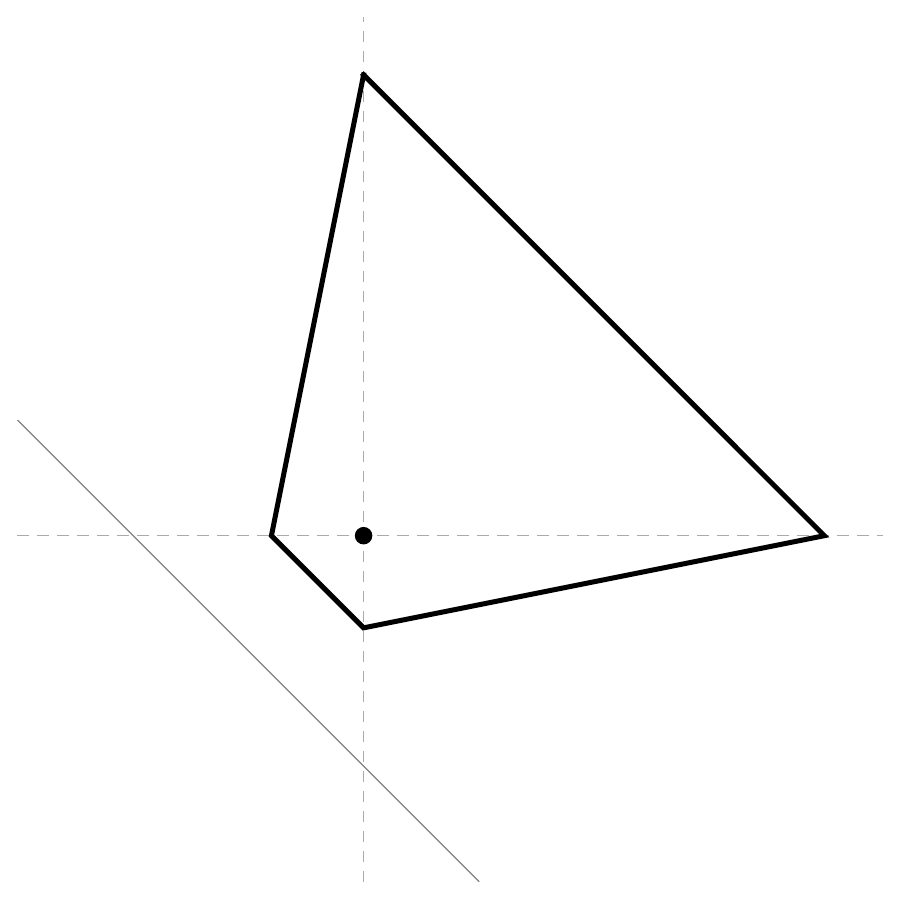}
}
\put(-55,210){
\includegraphics[scale = .4, clip = true, draft = false]{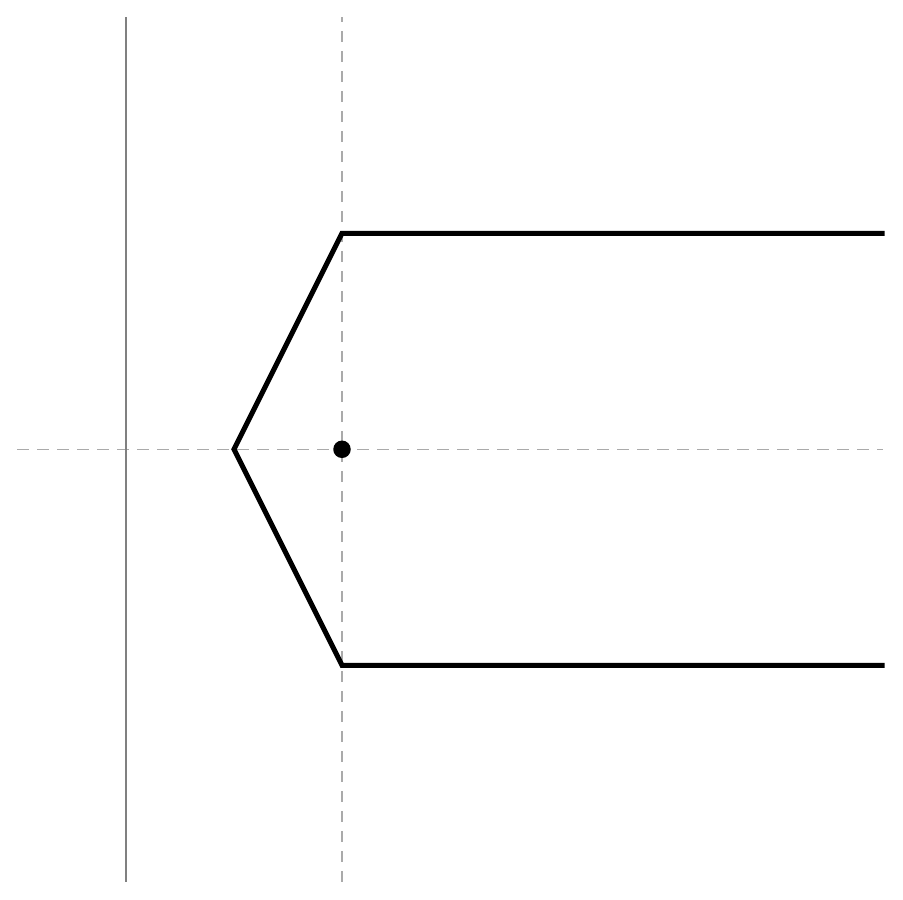}
}
\put(60,210){
\includegraphics[scale = .4, clip = true, draft = false]{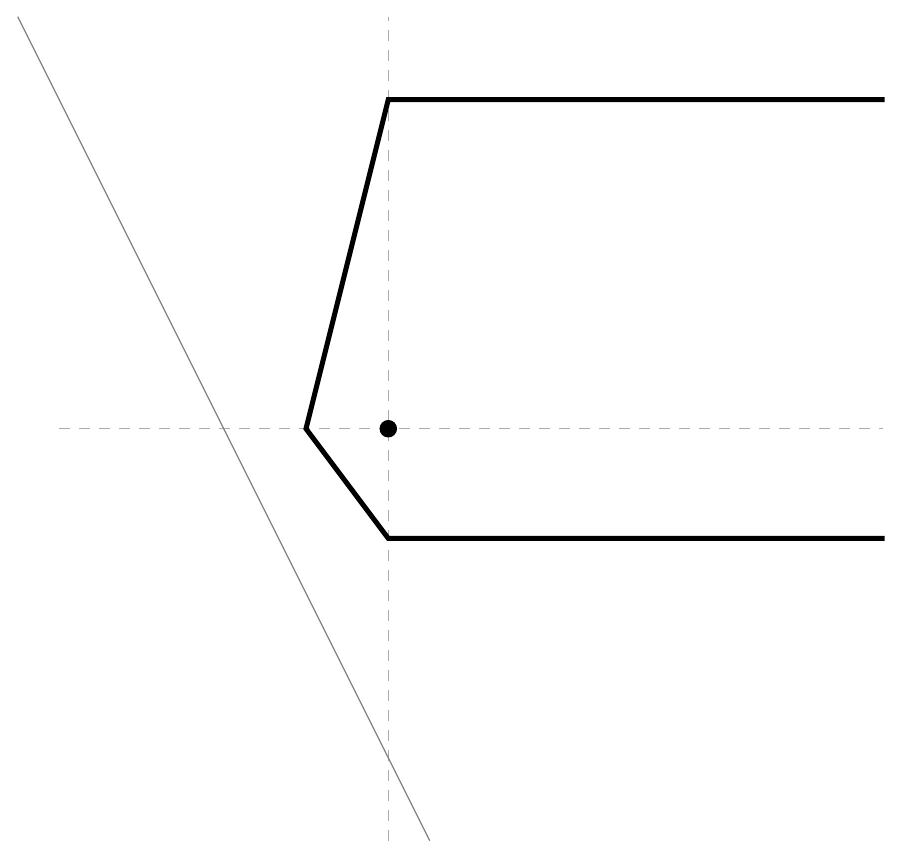}
}
\put(180,200){
\includegraphics[scale = .4, clip = true, draft = false]{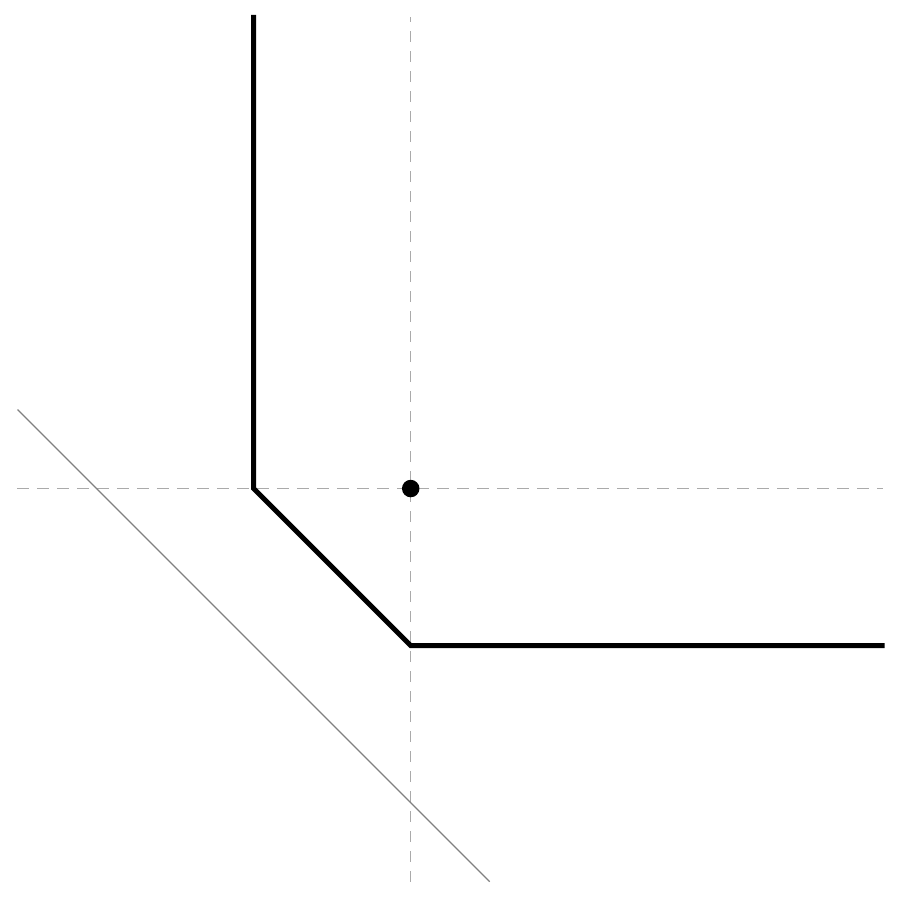}
}
\put(-50,110){
\includegraphics[scale = .4, clip = true, draft = false]{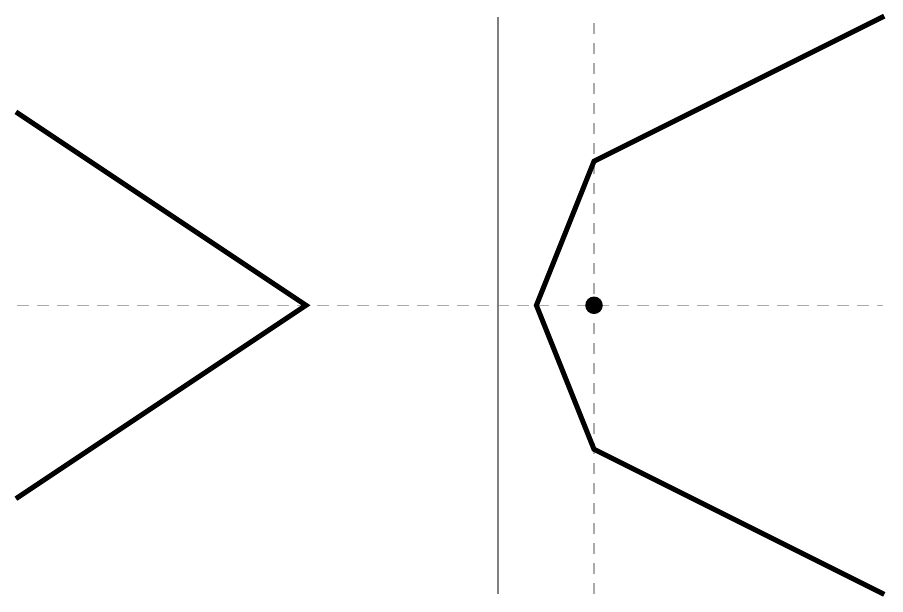}
}
\put(65,100){
\includegraphics[scale = .4, clip = true, draft = false]{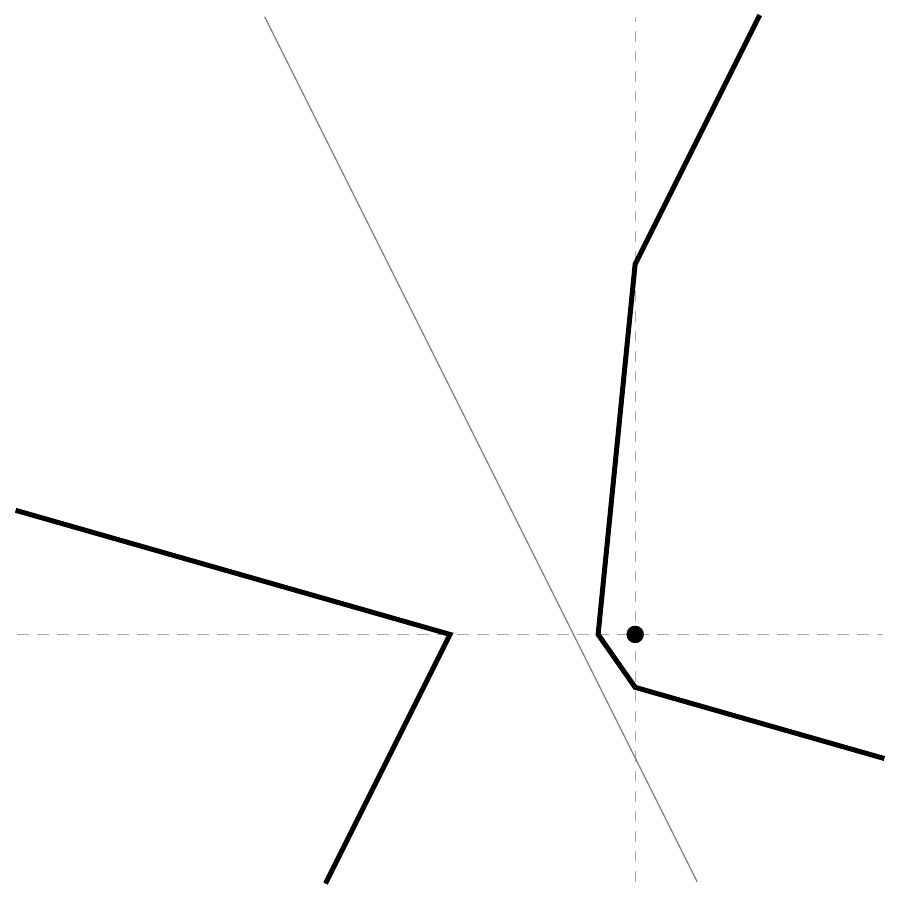}
}
\put(190,90){
\includegraphics[scale = .4, clip = true, draft = false]{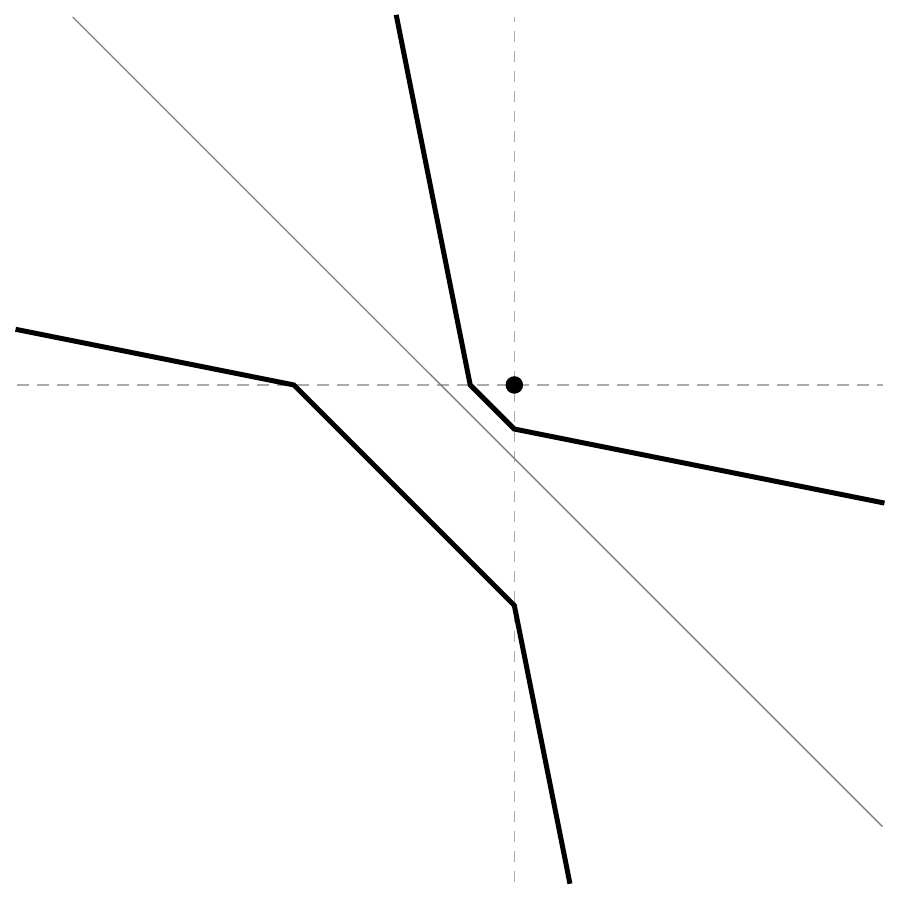}
}
\put(10,0){
\includegraphics[scale = .4, clip = true, draft = false]{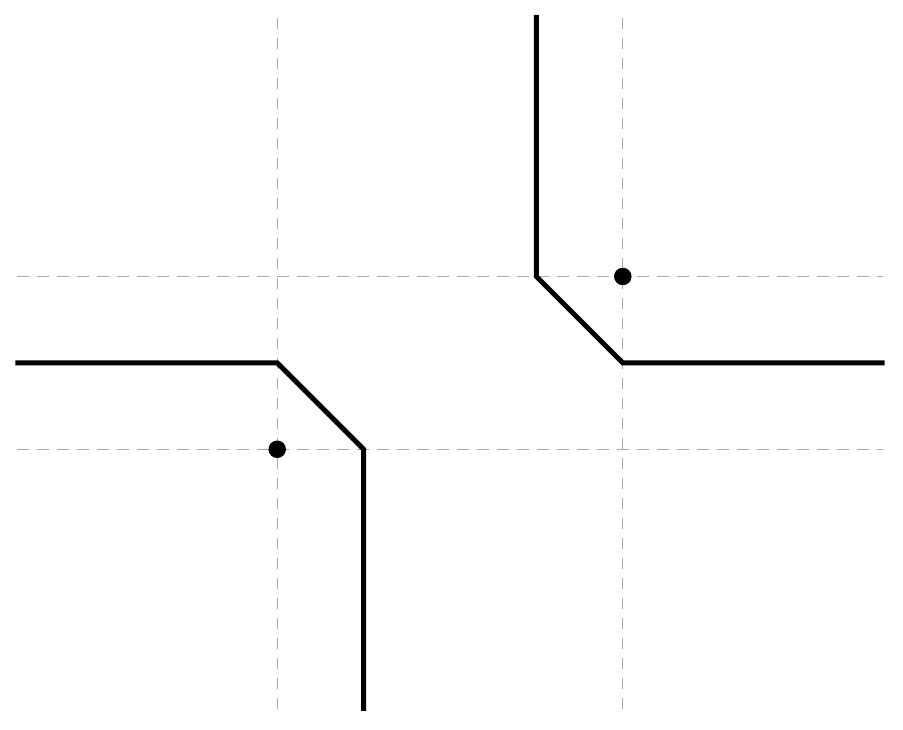}
}
\put(145,0){
\includegraphics[scale = .4, clip = true, draft = false]{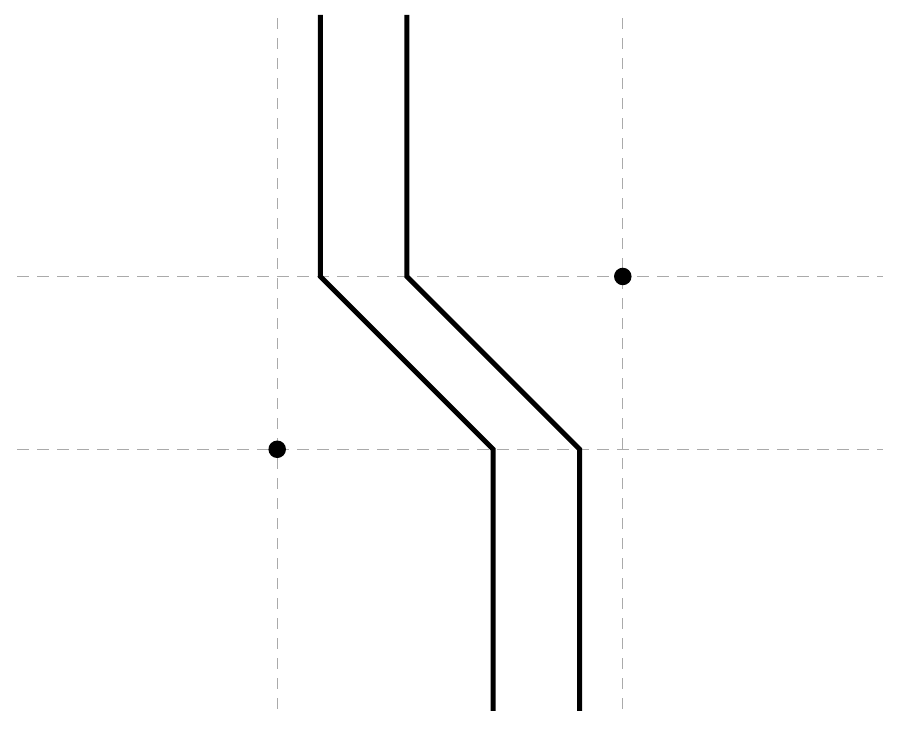}
}
\end{picture}
\caption{Some taxicab conic sections using the distance definitions.  First and last rows:  ellipses and hyperbolas using the two-foci definition; second, third, and fourth rows:  ellipses, parabolas, and hyperbolas using the focus-directrix definition. The foci and directrices, are also shown.} \label{distanceconicsfig}
\end{figure}

While this process is fairly natural, it is somewhat separate from the definition of conic section as the slice of a cone.  In this paper, the notion of a taxicab conic section is explored from the perspective of slicing cones.  In so doing, a number of technical issues arise which serve to develop a deeper understanding of the richness and subtlety of taxicab space in two and three dimensions.

For this paper, a cone is defined to be the set of points $x$ whose distance to a given line $\ell$ is a multiple of its distance to a given plane $P$
\[
C(\ell, P, \kappa) = \{x \in \mathbb{R}^3: d(x, \ell) = \kappa\, d(x, P)\}.
\]
To ensure that our conic sections are objects in taxicab two-space, we restrict our slicing plane to be a coordinate plane, while allowing the cone to vary.  The nature of the taxicab distance leads to objects that are piecewise linear.  See Figure~\ref{sectionconstructionfig} for a representation of the process under consideration.

Taxicab conic sections via sliced cones have been explored in \cite{Laatsch}.  In that setting, a single cone, defined using one coordinate axis for $\ell$ and the complementary coordinate plane for $P$, is used.  An arbitrary slicing plane is chosen, and the resulting intersection is projected onto $P$.  This process reproduces the conic sections defined using the focus-directrix method.  The exploration in this paper considers more general cones and eliminates the need to project since the slicing plane will already be a coordinate plane.

\begin{figure}
\begin{picture}(220,200)
\put(-10,130){
\includegraphics[scale = .35, clip = true, draft = false]{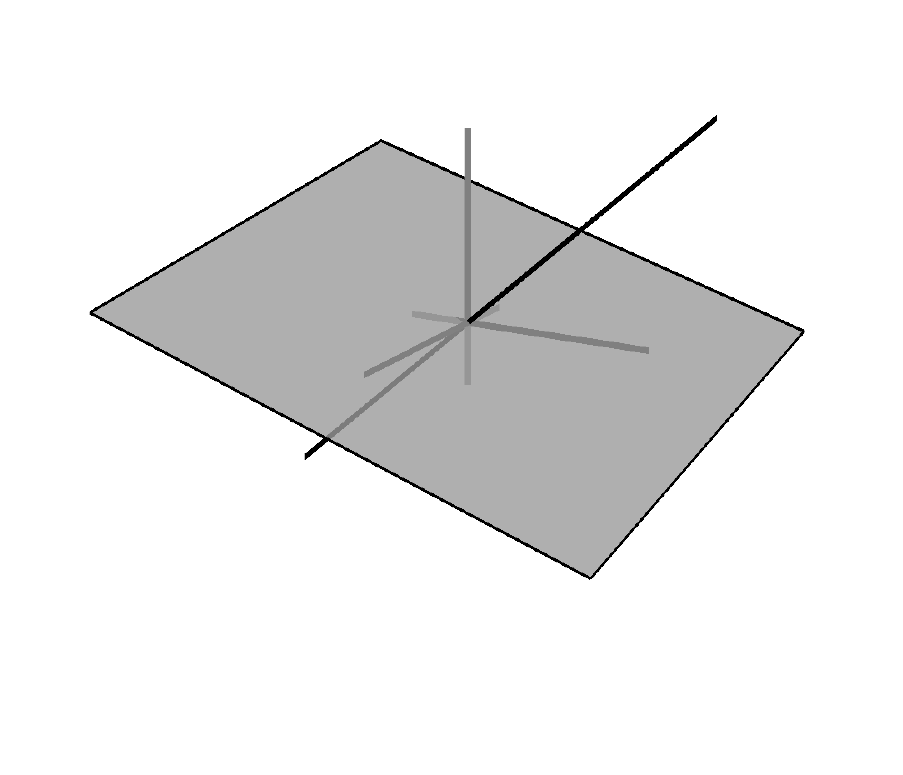}
}
\put(13,187){\scriptsize $P$}
\put(68,190){\scriptsize $\ell$}
\put(-45,60){
\includegraphics[scale = .6, clip = true, draft = false]{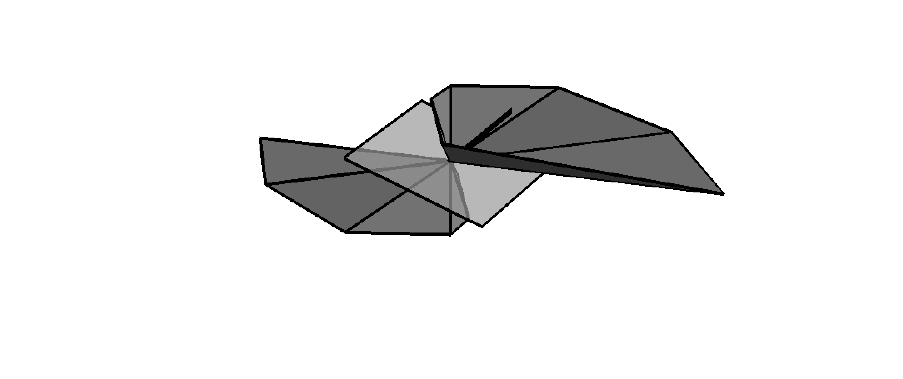}
}
\put(-45,-10){
\includegraphics[scale = .6, clip = true, draft = false]{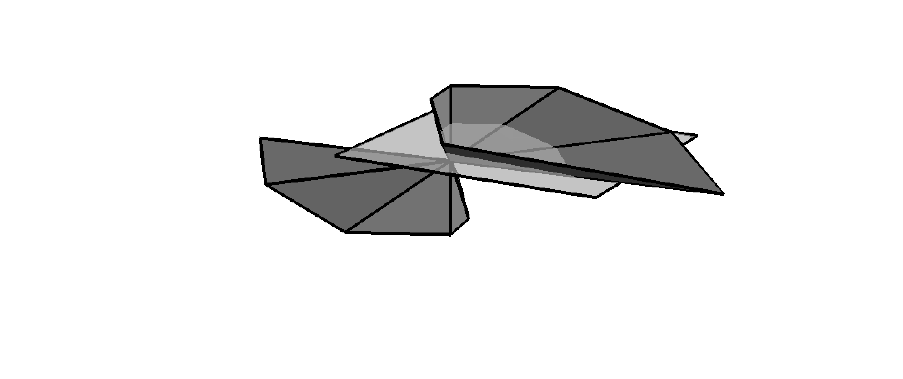}
}
\put(60,12){\scriptsize $S$}
\put(100,20){
\includegraphics[scale = .5, clip = true, draft = false]{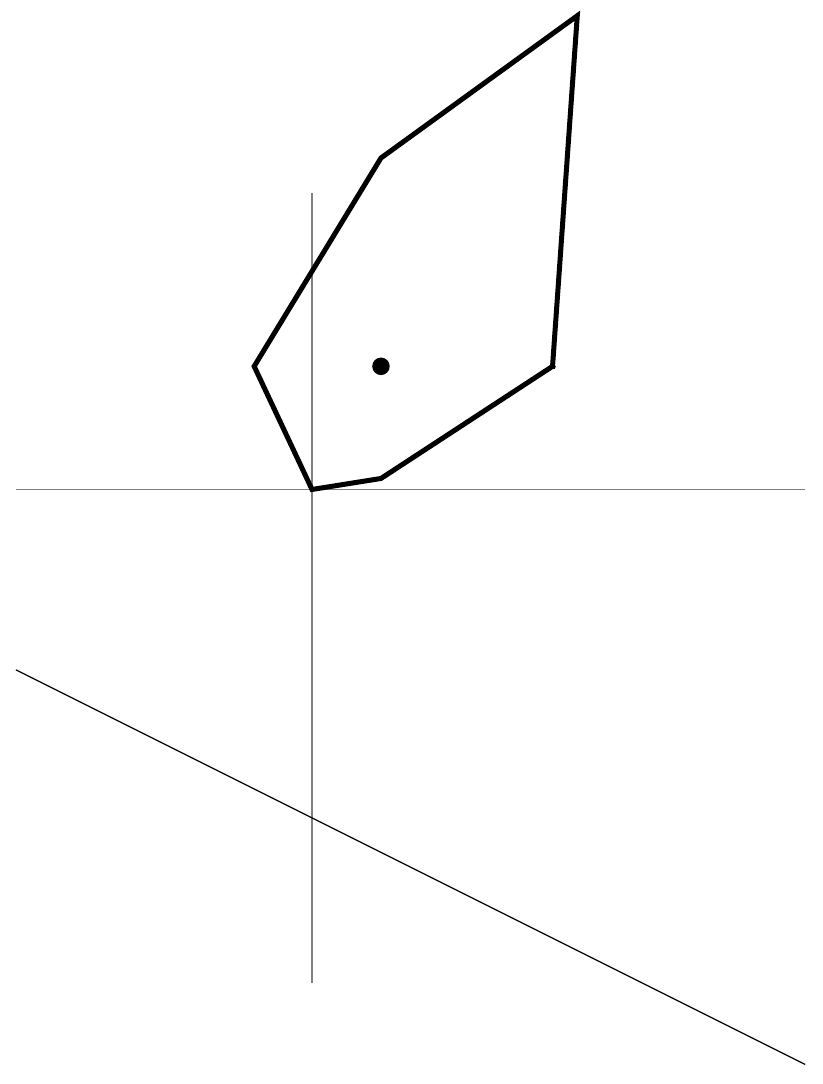}
}
\put(35,140){(a)}
\put(35,70){(b)}
\put(35,0){(c)}
\put(155,0){(d)}
\end{picture}
\caption{Given a line $\ell$ and plane $P$ (a), a cone is produced (b).  This cone is sliced by a coordinate plane $S$ (c), and the resulting intersection is represented in $\mathbb{R}^2$ (d).  In the final image, the point inside the conic section and the line below it indicate the intersections of $\ell$ and $P$ with $S$ respectively.} \label{sectionconstructionfig}
\end{figure}

A complete characterization of the conic sections arising from this more general method requires three main steps.  In Theorems~\ref{nonhorizontalverticesthm} and  \ref{horizontalverticesthm} we compute the vertices of a conic section.  Then, Theorems~\ref{sectionconstructionthm} and \ref{auxsectionconstructionthm} show how to connect these vertices by segments and rays to form the conic section.  Finally, Theorems~\ref{coniccharacterizationthm} and  \ref{horizontalconiccharacterizationthm} characterize when the resulting conic sections are ellipses, parabolas, or hyperbolas.  Multiple theorems are needed for each step because the analysis when $\ell$ is horizontal is somewhat different from when it is not horizontal.

This paper is organized as follows: in Section~\ref{taxicabmetricsection} we introduce the taxicab metric in $\mathbb{R}^2$ and $\mathbb{R}^3$, and discuss a number of its properties.  In Section~\ref{taxicabconessection} we discuss taxicab cones, paying special attention to strategies for measuring the distance between a point and a plane or a point and a line. This proves to be somewhat technical, but also leads to insight into how to think about the set of parameters that best characterize planes and lines in taxicab space.  Also in this section, we establish our conventions about the slicing plane and discuss its many important roles.  The analysis of conic sections themselves when the defining line is non-horizontal and when it is horizontal are in Sections~\ref{conicsectionswhenellisnonhorizontalsection} and~\ref{conicsectionswhenellishorizontalsection} respectively.  This analysis leads to the discovery of rich geometric relationships that in turn provide useful shortcuts for drawing the wide variety of conic sections that arise.  With methods for producing and understanding conic sections established, we explore some special cases in Section~\ref{specialcasessection}.  Finally, we conclude with some final thoughts and possible next steps in Section~\ref{nextstepssection}.

\section{The taxicab metric} \label{taxicabmetricsection}
In this section, we establish our notation and review some basic facts about the taxicab metric in $\mathbb{R}^2$ and $\mathbb{R}^3$ that motivate many of the choices that are useful for our analysis.

\subsection{Definitions}

The taxicab distance between points $x=(x_1, x_2, \ldots, x_n)$ and $y=(y_1, y_2, \ldots, y_n)$ in $\mathbb{R}^n$ is
\[
d(x,y)=|x_1-y_1|+|x_2-y_2|+ \cdots + |x_n-y_n|.
\]
More generally, the distance between a point $x$ and set $T$ is
\[
d(x, T) = \inf_{y \in T} d(x, y).
\]
We will also want to measure partial distances between a point $x$ and set $T$, so we define
\[
d_i(x, T) = \inf_{y \in T: y_j = x_j \forall j \neq i} |x_i - y_i|
\]
and
\[
d_{i,j}(x, T) = \inf_{y \in T: y_k = x_k \forall k \neq i, j}  |x_i - y_i| +  |x_j - y_j|.
\]
Note that $d_i(x, T)$ is the infimum of the distances between $x$ and points in $T$ on the $i$th coordinate line through $x$, and $d_{i,j}(x, T)$ is the infimum of the distances between $x$ and points in $T$ on the $(i,j)$th coordinate plane through $x$.

A sphere centered at the point $y$ with radius $r \geq 0$ in $(\mathbb{R}^n, d)$ is
\begin{align*}
\sigma_r(y) &= \{x \in \mathbb{R}^n: d(x,y) = r\} \\
&= \{x \in \mathbb{R}^n: |x_1 - y_1|+|x_2 - y_2| + \cdots + |x_n - y_n|=r\}.
\end{align*}
When $n = 2$, this is a square with vertices on the coordinate lines passing through the center.  When $n = 3$, this is an octahedron with vertices on the coordinate lines passing through the center.  See Figure~\ref{taxicabdistandspherefig}.  When $n = 3$, the subsets of $\sigma_r(y)$ determined by restricting to a coordinate slice passing through $y$ will be useful, and we call these three sets \textit{great circles} of the taxicab sphere.

\begin{figure}
\begin{picture}(250,180)
\put(-20,80){
\includegraphics[scale = .5, clip = true, draft = false]{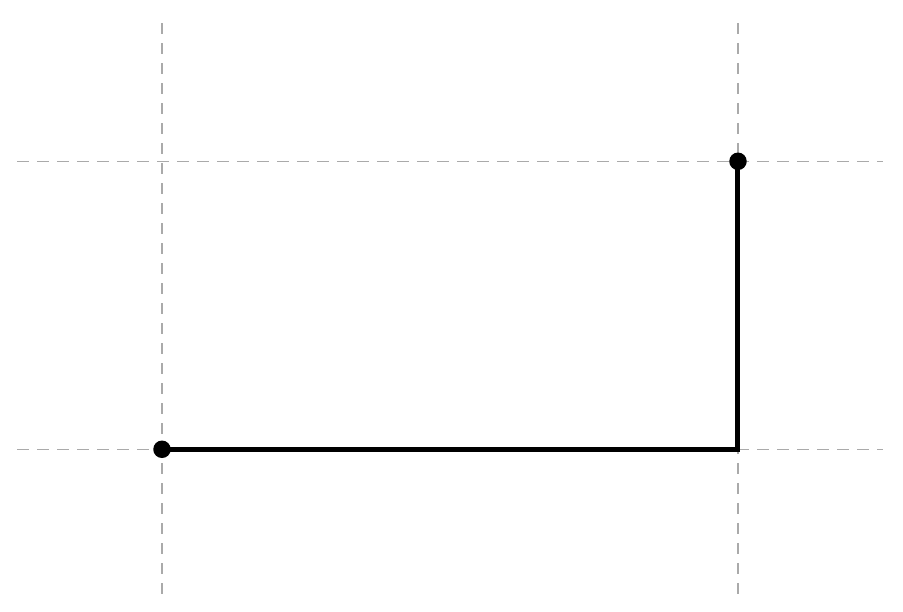}
}
\put(145,75){
\includegraphics[scale = .4, clip = true, draft = false]{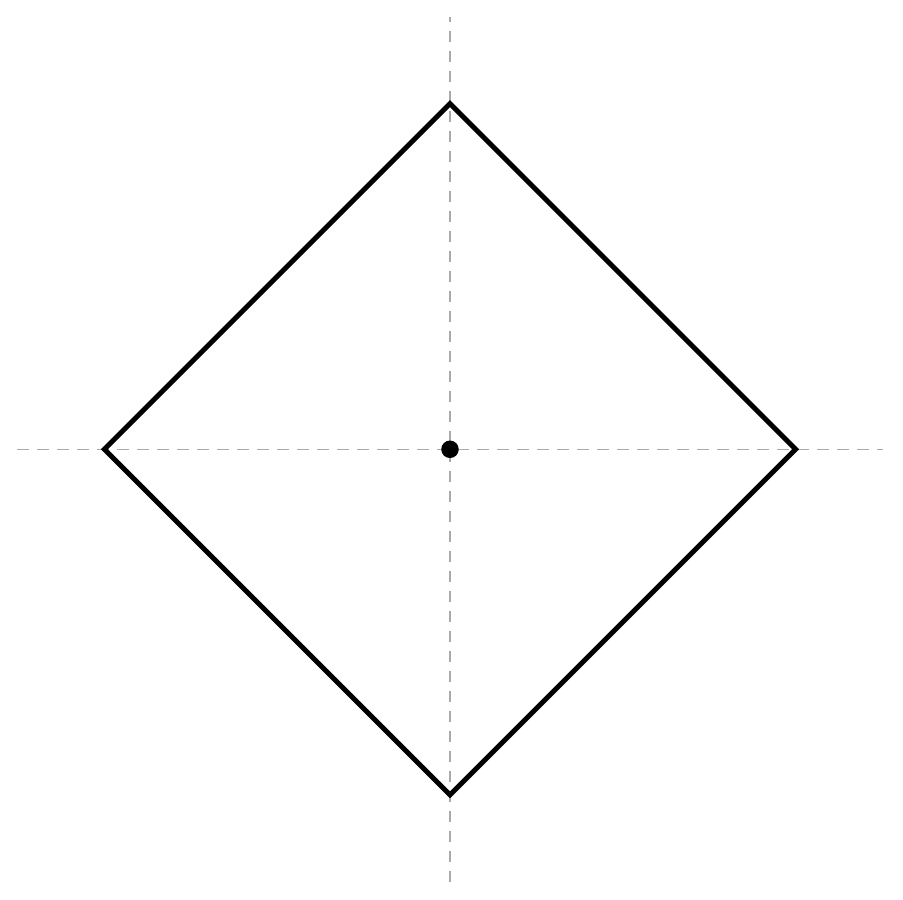}
}
\put(-10,0){
\includegraphics[scale = .5, clip = true, draft = false]{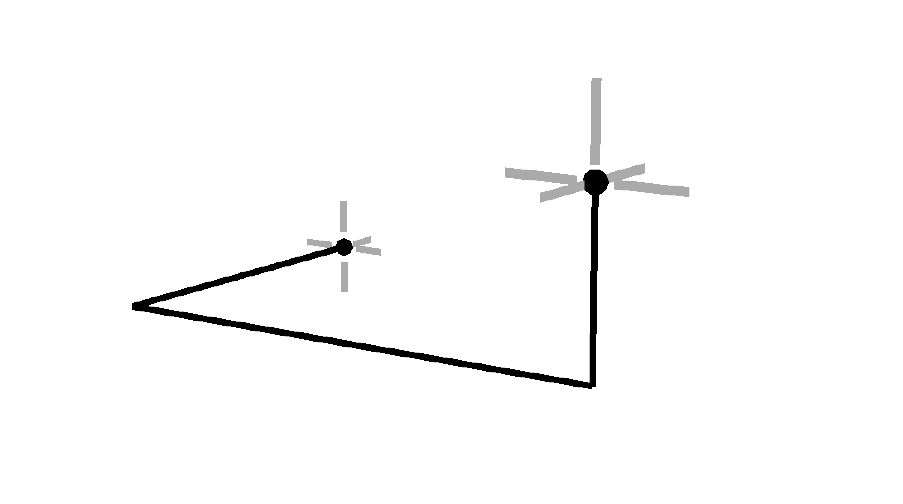}
}
\put(130,-40){
\includegraphics[scale = .5, clip = true, draft = false]{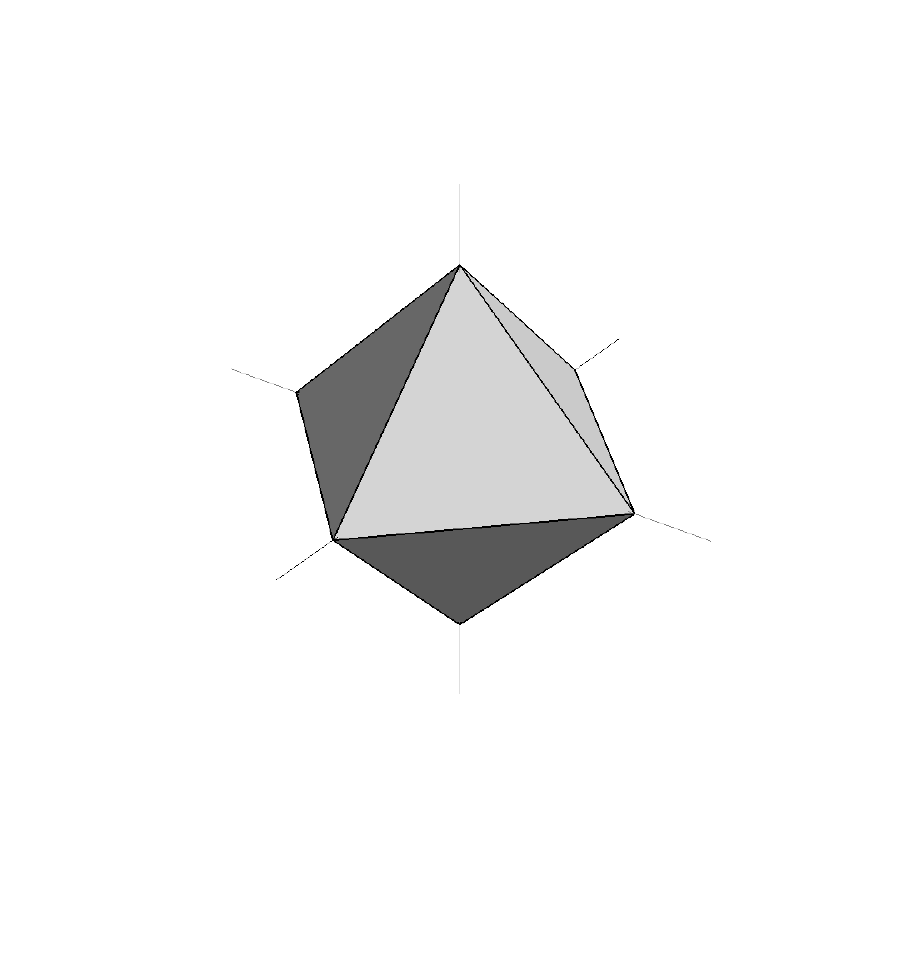}
}
\put(-2,96){$x$}
\put(95,150){$y$}
\put(28,91){$|y_1 - x_1|$}
\put(95,122){$|y_2 - x_2|$}
\put(37,40){$\scriptscriptstyle x$}
\put(71,52){$y$}
\put(-4,36){$\scriptscriptstyle|y_1 - x_1|$}
\put(25, 10){\scriptsize $|y_2 - x_2|$}
\put(83, 28){\small $|y_3 - x_3|$}
\end{picture}
\caption{Taxicab distance visualized in $\mathbb{R}^2$ and $\mathbb{R}^3$, along with a taxicab circle and a taxicab sphere.} \label{taxicabdistandspherefig}
\end{figure}

\subsection{Isometries}

The group of isometries of two- and three-dimensional taxicab space that fix a point are both finite, unlike for Euclidean space.  While a full justification takes a bit of work, it is not surprising that the group of isometries that fix a point in the taxicab plane is isomorphic to the symmetry group of a square \cite{isometries1,isometries2}, while for taxicab 3-space, it is isomorphic to the symmetry group of an octahedron.

For the purposes of this paper, it is enough to know that arbitrary translations, rotations about coordinate lines by multiples of $\frac{\pi}{2}$, and reflections across coordinate planes are all isometries.  The richer structure of the isometry group for $(\mathbb{R}^3, d)$ will not be necessary.

\subsection{Subspaces and inherited metrics} \label{subspacesec}
Given $(\mathbb{R}^3, d)$, the coordinate planes will naturally inherit a taxicab metric.  Less clear is what happens for planes that lie obliquely to one or more coordinate axes.  Some intuition can be gained by noting how planes through the origin intersect $\sigma_1(0)$.

Planes containing one coordinate line intersect in rhombi which can be stretched to form squares.  We leave it as an exercise for the reader to show that in this case, the induced metric is a taxicab metric but that appropriate coordinates need to be chosen to see this.  These coordinates could not be extended to coordinates in $\mathbb{R}^3$ that are related isometrically to the ambient coordinates since there is no isometry that  moves a non-coordinate plane to a coordinate plane.

Next, planes containing no coordinate lines do not inherit a taxicab metric at all.   Such planes intersect $\sigma_1(0)$ in hexagons and as a result, the isometry group for this space differs from that for the taxicab plane.  See \cite{Sowell} for an analysis of the induced metric on the plane containing the points $(1,0,0)$, $(0,1,0)$, and $(0,0,1)$.

In light of these technical issues, we make the choice in our analysis here to restrict to coordinate planes when we want the geometry in that plane to be a taxicab geometry that is consistent with the ambient taxicab structure.  As such, we do not restrict the planes that define the cone, but we do restrict the planes that slice the cone to produce a section.  These issues are also what motivate our definition of great circles.

\subsection{Angles}
Unlike the Euclidean metric, the taxicab metric does not arise from an inner product.  As such, while notions of angle can be introduced, none will enjoy all the properties and relationships we are used to in Euclidean geometry.  See \cite{ThompsonDray} for further discussion of this along with a formulation of angle in $(\mathbb{R}^2,d)$ that captures many of the geometric aspects with which we are familiar.

For our purposes, rather than introducing a specific definition of angle, we avoid the use of angle for any of the constructions performed.  The main impact of this is that it broadens the set of cones that are explored.  As indicated above, since our slicing plane is restricted to coordinate planes, we will allow any plane to be used to define cones.  Furthermore, since the notion of angle does not naturally emerge from taxicab distance, requiring the line and the plane to be perpendicular is somewhat artificial.  As such, we only require that the line and plane intersect at a single point, increasing the variety of conic sections found.  Also, see Section~\ref{lineplaneperpsubsec} for more discussion about this choice.

Note that in the Euclidean setting, a circular cone can be defined in terms of a plane $P$ and a line $\ell$ that is perpendicular to $P$.  Relaxing the perpendicularity constraint does change the shape of the cone, but does not increase the types of objects that might be called conic sections.  For a line and plane in general position, the equation defining the cone is still quadratic, and slicing that cone by a plane preserves the quadratic nature of the equation defining the resulting intersection.  Since all such quadratic equations define some conic section, nothing new is gained.  As such, it is worth considering a line and a plane in general position in the taxicab setting as well.

\section{Taxicab cones} \label{taxicabconessection}

In this section, we develop the structure necessary to understand the cones under consideration.  This involves a careful analysis of the distance between a point and a plane and the distance between a point and a line.  We also establish conventions for our slicing plane, and discuss its many uses.

\subsection{Cones}
Let $\ell$ be a line and let $P$ be a plane where, to avoid degenerate cases, $\ell \cap P$ consists of a single point.  Also, let $\kappa \in (0, \infty)$.  We define a cone to be the set of points where the distance to the line is equal to a constant multiple of the distance to the plane:
\[
C(\ell, P, \kappa)=\{x\in\mathbb{R}^3:d(x,\ell)= \kappa \, d(x,P)\}.
\]
We call $\ell$ the defining line and $P$ defining plane for the cone.  Without loss of generality, we restrict our attention to the cases where $\ell \cap P = \{(0,0,0)\}$.

We represent the defining plane by choosing $A \in \mathbb{R}^3 \backslash \{0\}$ as follows:
\[
P = P_A = \{y \in \mathbb{R}^3: A_1 y_1 + A_2 y_2 + A_3 y_3 = 0\}.
\]

We represent the defining line by choosing $a  \in \mathbb{R}^3 \backslash \{0\}$ such that
\[
\ell = \ell_a = \{at: t \in \mathbb{R}\}
\]
and we denote the point corresponding to the value $t$ by $\ell(t)$.

With these conventions in place, we note that a cone is degenerate if and only if $A_1 a_1 + A_2 a_2 + A_3 a_3 = 0$ and we will at times make use of the fact that $A_1 a_1 + A_2 a_2 + A_3 a_3$ is non-zero without explicitly mentioning it.

\subsection{The slicing plane} \label{slicingplanesec}

The primary motivation for this paper is to find sets in the taxicab plane that represent slices of cones.  As mentioned above, we choose to restrict the slicing plane to be a coordinate plane and without loss of generality, we restrict our choice of slicing plane further to the plane $S = \{x \in \mathbb{R}^3: x_3 = 1\}$.  This is justified in two steps.  First, the coordinate plane $\{x_j = h\}$ can be rotated to $\{x_3 = |h|\}$ using taxicab isometries.  Second, since lines and planes through the origin are invariant under dilation, cones are as well.  As such, the resulting conic sections for different choices of $h$ will be similar to one another, and so we can rescale to $h = 1$.

While this choice is arbitrary, we will use it to establish terminology that will clarify the variety of cases under consideration.  In particular, we say the slicing plane, and any plane parallel to it, is horizontal.

\subsection{The defining plane}

In this section, we collect some useful facts about the defining plane.  After determining a formula for the distance between a point and a plane, we explore parameter spaces for planes that capture some of the geometric features exposed by the distance formula.

\subsubsection{Distance between a point and a plane}

In order to perform computations involving a cone, a more explicit understanding of the distance between a point and a plane will be helpful.

\begin{thm} \label{disttoplanethm}
Given a point $x$ and a plane $P = P_A$,
\begin{equation}  \label{disttoplaneeq}
d(x, P) =  \frac{\left|A_1 x_1 + A_2 x_2 + A_3 x_3 \right|}{\mathrm{max}\{|A_1|, |A_2|, |A_3|\}}.
\end{equation}
\end{thm}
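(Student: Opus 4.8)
The plan is to reduce this infimum to a one-variable optimization by passing to the displacement vector and then exploiting the duality between the $\ell^1$ and $\ell^\infty$ norms that underlies taxicab geometry. Writing $z = x - y$ so that $d(x,y) = |z_1| + |z_2| + |z_3| =: \|z\|_1$, a point $y$ lies on $P = P_A$ exactly when $A_1 z_1 + A_2 z_2 + A_3 z_3 = A_1 x_1 + A_2 x_2 + A_3 x_3$, since $A_1 y_1 + A_2 y_2 + A_3 y_3 = 0$. Thus $d(x,P) = \inf\{\|z\|_1 : A \cdot z = c\}$, where I abbreviate the linear functional by $A \cdot z$ and set $c = A_1 x_1 + A_2 x_2 + A_3 x_3$. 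I would then prove the two matching inequalities separately.

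For the lower bound, I would take an arbitrary admissible $z$ and apply the triangle inequality together with the crude bound $|A_i| \le \max_j |A_j|$ to obtain $|c| = |A \cdot z| \le |A_1||z_1| + |A_2||z_2| + |A_3||z_3| \le \max_j |A_j| \, \|z\|_1$. Rearranging shows $d(x,y) \ge |c| / \max_j |A_j|$ for every $y \in P$, so the infimum is at least this quantity.

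For the matching upper bound I would exhibit a single point of $P$ that attains it. Choosing an index $i^*$ with $|A_{i^*}| = \max_j |A_j|$, which is positive since $A \neq 0$, I take $z$ supported on the one coordinate $i^*$ with $z_{i^*} = c/A_{i^*}$ and all other entries zero. Then $A \cdot z = c$, so $y = x - z \in P$, while $\|z\|_1 = |c|/|A_{i^*}| = |c|/\max_j |A_j|$. This matches the lower bound, so the infimum is attained and equals the claimed formula. Geometrically, this says the nearest point is reached by sliding along the coordinate axis on which $A$ has the largest component; in the language already introduced, it realizes the identity $d(x,P) = \min_i d_i(x,P)$.

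The content here is light, so there is no serious obstacle; the only point requiring care is the sharpness of the lower-bound inequality — that no oblique displacement can do better than a single-axis move — which is exactly what the equality case of the triangle inequality delivers when all the weight is concentrated on the extremal coordinate. Recognizing that the whole computation is precisely the $\ell^1$/$\ell^\infty$ dual-norm problem is the only real insight involved.
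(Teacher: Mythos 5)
Your proof is correct, but it takes a genuinely different route from the paper's. The paper argues geometrically: it grows a taxicab sphere $\sigma_r(x)$ until first contact with $P$, observes that the contact set must contain a vertex of the sphere, concludes that $d(x,P) = \min_{i} d_i(x,P)$, and then computes each partial distance $d_i(x,P) = |A_1x_1+A_2x_2+A_3x_3|/|A_i|$ explicitly, so that taking the minimum over $i$ produces the maximal $|A_i|$ in the denominator. You instead recast the problem as minimizing $\|z\|_1$ subject to $A\cdot z = c$ and run the standard $\ell^1$/$\ell^\infty$ duality: the estimate $|A\cdot z|\le \max_j|A_j|\,\|z\|_1$ gives the lower bound, and a displacement supported on a single extremal coordinate attains it. Your version is shorter and arguably more airtight, since the paper's claim that ``the point(s) of contact will include a vertex'' is asserted from the picture rather than proved; your two-inequality argument needs no such geometric input and handles $x\in P$ without a separate case. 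What the paper's approach buys in exchange is the structural identity $d(x,P)=\min_i d_i(x,P)$ and the growing-sphere heuristic, both of which it reuses immediately afterward when treating the harder problem of the distance from a point to a line (where first contact occurs along a great circle rather than at a vertex). Your closing remark that the optimal displacement slides along the coordinate axis of largest $|A_i|$ recovers exactly that identity, so nothing downstream is lost.
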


\begin{proof}
If $x \in P$ then the result follows immediately. Otherwise, consider $\sigma_r(x)$ centered at $x$. Starting with $r = 0$, as $r$ is increased until $\sigma_r(x)$ makes contact with the plane, when the first contact occurs the point(s) of contact will include a vertex. The distance from $x$ to that vertex is the distance to the plane so
\[
d(x, P) = \min_{i \in \{1, 2, 3\}} d_i(x, P).
\]

Equation~\eqref{disttoplaneeq} follows from explicitly computing $d_i(x, P)$.  Let $y^i$ be the point in the plane sharing two coordinates with $x$, and where the $i$ coordinate differs, noting that if $A_i = 0$, then this point does not exist and the corresponding partial distance is infinite.  Then, since $y^i \in P$,
\[
A_1 y^i_1 + A_2 y^i_2 + A_3 y^i_3 = 0
\]
so that
\[
y^i_i = \frac{-A_j y_j - A_k y_k}{A_i} = \frac{-A_j x_j - A_k x_k}{A_i}
\]
and so
\begin{align*}
d(x,y^{i}) &= |x_i - y_i^i| \\ 
		&= \left| x_i +  \frac{A_j x_j + A_k x_k}{A_i} \right| \\
		&= \frac{1}{|A_i|} |A_1 x_1 + A_2 x_2 + A_3 x_3|.
\end{align*}
From this, it follows that the distance to the plane is given by Equation~\eqref{disttoplaneeq}.
\end{proof}

In light of this result, we say a plane is shallow if $\max\{|A_1|, |A_2|, |A_3|\}$ is $|A_3|$ alone, we say a plane is transitional if $\max\{|A_1|, |A_2|, |A_3|\}$ is $|A_3|$ along with at least one of $\{|A_1|, |A_2|\}$, and we say a plane is steep otherwise.  In particular, we say a plane is vertical if $A_3 = 0$.

\subsubsection{Parameter space for defining planes}

As defined above, any $A \in \mathbb{R}^3\backslash \{0\}$ serves to define a plane.  However, if $\tilde{A}$ is a nonzero multiple of $A$ then both of these vectors define the same plane.  As such,  from a topological perspective, the set of planes can be identified with the projective plane $\mathbb{R}P^2$.  Inspired by Theorem~\ref{disttoplanethm}, a representation of $\mathbb{R}P^2$ that respects the way we measure the distance between a point and a plane is
\[
\mathscr{P} = \{A \in \mathbb{R}^3: \max\{|A_1|, |A_2|, |A_3|\} = 1\}.
\]
This is a cube centered at the origin and when restricting to $\mathscr{P}$, $d(x, P)$ simplifies to
\[
d(x, P) = |A_1 x_1 + A_2 x_2 + A_3 x_3|.
\]
While geometrically pleasing, we recognize that this is a double cover of $\mathbb{R}P^2$ with each plane represented twice because a parameter vector and its negative describe the same plane.

Thinking again about the sphere centered at $x$ that makes first contact with a given plane, the open faces of the cube $\mathscr{P}$ correspond to planes which make contact with this sphere at just a vertex, the edges of $\mathscr{P}$ correspond to planes that make contact along an edge of the sphere, and the vertices of $\mathscr{P}$ correspond to the planes that make contact along an entire face of the sphere.  Additionally, the face of the cube in which a given parameter lies identifies the vertex that will make first contact.

For this parameter space, shallow planes correspond to the top and bottom faces, transitional planes correspond to the top and bottom square edges, and steep planes correspond to the sides.

\subsubsection{An alternative parameter space}

If a plane $P_{\widetilde{A}}$ is not vertical, we can choose an alternative parameterization by dividing the parameter vector by $\widetilde{A}_3$.  This is a gnomonic projection of sorts for the upper half of $\mathscr{P}$.  The map is $p: \mathscr{P} \cap \{x \in \mathbb{R}^3: x_3 > 0\} \rightarrow \mathbb{R}^3$, $(A_1, A_2, A_3) = p(\widetilde{A}) = \left(\frac{\widetilde{A}_1}{\widetilde{A}_3}, \frac{\widetilde{A}_2}{\widetilde{A}_3}, 1 \right)$ and provides us with a unique parameter vector for each non-vertical plane.

In this setting, vertical planes can be parameterized by a circle at infinity, represented by nonzero vectors with $A_3 = 0$.  This does not produce a unique parameter for each plane, but we will find that this redundancy does not significantly impact the analysis.  As such, we have the alternative parameter space
\[
\mathscr{P}' = \{ A \in \mathbb{R}^3: A_3 = 1 \}
	\cup \{ A \in \mathbb{R}^3 \backslash \{0\}: A_3 = 0 \}.
\]

We will find that this alternative parameter space will serve well in light of our choice of a horizontal slicing plane.  In this setting, $P$ is shallow if $A$ lies in the open central square $(-1,1) \times (-1,1)$, transitional if $A$ lies in the boundary of this square, and steep otherwise.  See Figure~\ref{planeparameterspacefig} for visualizations of these parameter spaces.

\begin{figure}
\begin{picture}(250,130)
\put(-70,-35){
\includegraphics[scale = .7, clip = true, draft = false]{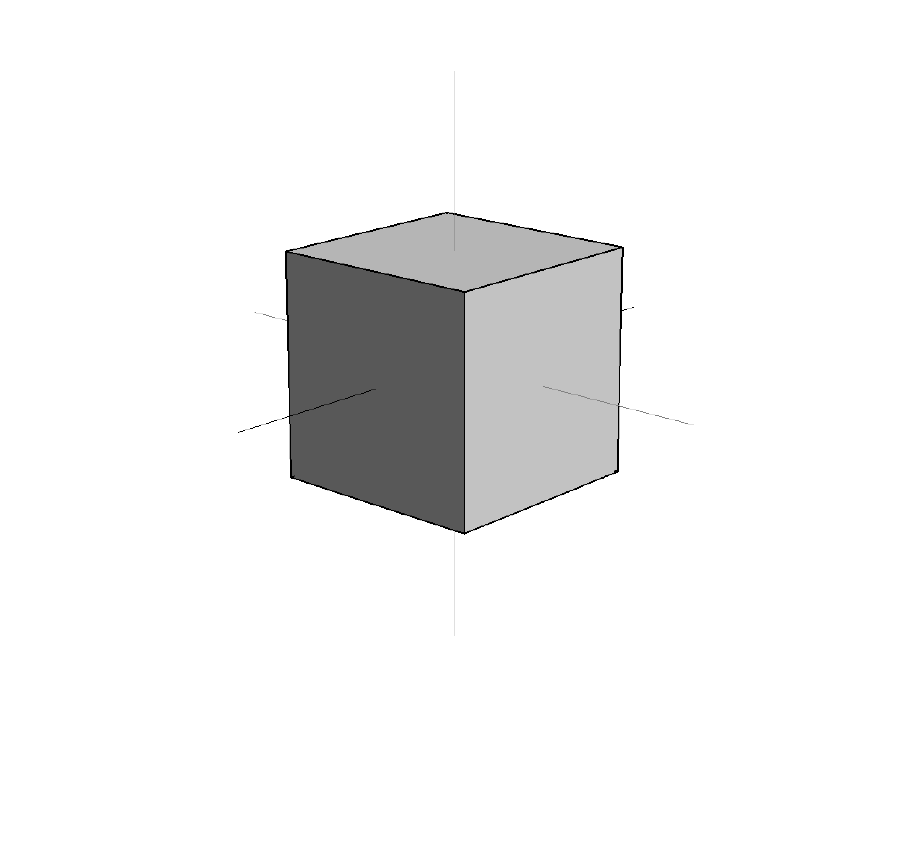}
}
\put(137,5){
\includegraphics[scale = .45, clip = true, draft = false]{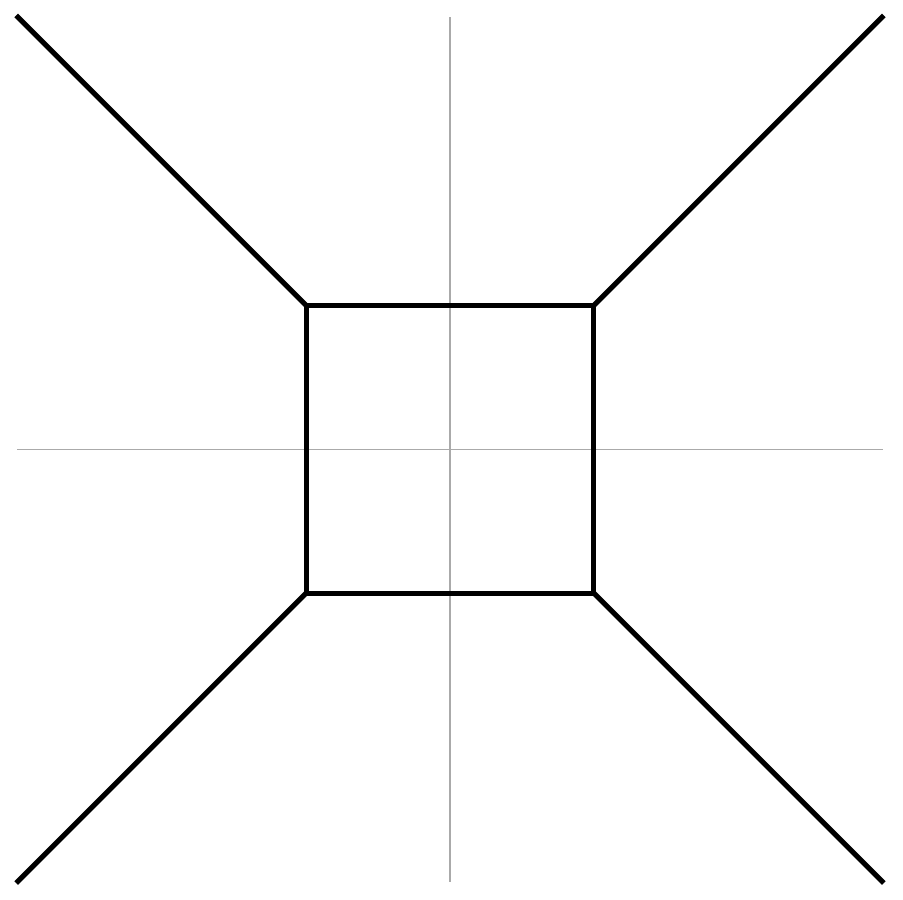}
}
\put(80,110){shallow}
\put(83,100){planes}
\put(80, 105){\vector(-3,-1){50}}
\put(120, 105){\vector(2,-1){70}}
\put(-28,45){$\widetilde{A}_1$}
\put(76,45){$\widetilde{A}_2$}
\put(30,120){$\widetilde{A}_3$}
\put(260,59){$A_1$}
\put(203,120){$A_2$}
\put(20,-7){$\mathscr{P}$}
\put(192,-7){$\mathscr{P}'$}
\end{picture}
\caption{Two parameter spaces for planes.  The cube $\mathscr{P}$ is a natural choice because of the way distance between a point and a plane is measured.  The planar parameter space $\mathscr{P}'$ is a good choice in the context of conic sections because of our choice of slicing plane.  Note that for $\mathscr{P}'$ the circle at infinity is not shown.} \label{planeparameterspacefig}
\end{figure}

\subsection{The defining line}

In this section, we collect some useful facts about the defining line.  We find that the computation of the distance between a point and a line proves to be somewhat more complicated than the distance between a point and a plane.  After introducing some technical facts about sums of absolute values, we then carefully explore the geometry in order to establish a general method for computing the distance between a point and a line.  Finally, we use this to develop appropriate parameter spaces for lines.

\subsubsection{Sums of absolute values}
Given a set of numbers $a_1, \ldots, a_N$, we say $a_j$ dominates the other values if
\[
|a_j| > \sum_{i = 1, i \neq j}^N |a_i|
\]
and we say $a_j$ transitionally dominates the other values if
\[
|a_j| = \sum_{i = 1, i \neq j}^N |a_i|.
\]
We will find that measuring the distance to a line $\ell_a$ requires knowing whether or not a component of the parameter $a$ dominates the other components.  The following technical lemma, the proof of which is left to the reader, captures the essential reason for this.

\begin{lemma} \label{sumabslemma}
Let
\begin{align*}
f(t) &= \sum_{i = 1}^N |b_i - m_i t| \\
    &= \sum_{i = 1}^N |m_i|\left|\frac{b_i}{m_i} - t\right|
\end{align*}
where the $m_i$ are all nonzero.  Note that if $\frac{b_i}{m_i} = \frac{b_j}{m_j}$ the corresponding terms can be merged.  Wth this in mind, suppose, without loss of generality, that the indexing is such that the $\frac{b_i}{m_i}$ are in strictly increasing order.  Then:
\begin{itemize}
\item If for all $j \in \{1, \ldots, N-1\}$
\[
\sum_{i = 1}^j |m_i| - \sum_{i = j+1}^N |m_i| \neq 0
\]
then the critical points for $f$ are exactly the points $t = \frac{b_i}{m_i}.$
\item If there exists a (necessarily unique) $j \in \{1, \ldots, N-1\}$ such that
\[
\sum_{i = 1}^j |m_i| - \sum_{i = j+1}^N |m_i| = 0
\]
then the critical points for $f$ are exactly the points $t = \frac{b_i}{m_i}$ and the interval $\left[\frac{b_j}{m_j}, \frac{b_{j+1}}{m_{j+1}}\right].$

\item The function $f$ is convex and as such has no local maxima.  It achieves one local, and hence global, minimum value at either a unique point or an interval.

\item Let $j$ be the index where
\[
\sum_{i=1}^{j-1} |m_i| < \sum_{i=j}^{N} |m_i|
\]
and
\[
\sum_{i=1}^{j} |m_i| \geq \sum_{i=j+1}^{N} |m_i|.
\]
Then the local minimum occurs at $\frac{b_j}{m_j}$ or, in the case of equality in the second line, on the interval $\left[\frac{b_j}{m_j}, \frac{b_{j+1}}{m_{j+1}}\right].$

\item Suppose for some $j \in \{1, \ldots, N\}$, $m_j$ dominates the other $m_i$.  Then the global minimum is realized at $t = \frac{b_j}{m_j}$.

\item In the special case where $N = 3$,
\begin{itemize}
\item if there is no dominant slope, then the minimum occurs
the middle value in the set $\left\{\frac{b_1}{m_1}, \frac{b_2}{m_2},\frac{b_3}{m_3} \right\}$;
\item if $m_j$ is transitionally dominant then the minimum also occurs at $\frac{b_j}{m_j}$, and if this value is not the middle value, then the minimum occurs along an interval.
\end{itemize}
\end{itemize}

\end{lemma}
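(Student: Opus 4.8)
The plan is to use the fact that $f$ is a sum of one-variable convex, piecewise-linear functions, so that $f$ itself is convex and piecewise linear and everything is governed by its slope on each linear piece. First I would record the elementary shape of each summand: $|m_i|\,\bigl|\tfrac{b_i}{m_i} - t\bigr|$ is convex with a single kink at $t = \tfrac{b_i}{m_i}$, contributing slope $-|m_i|$ to the left of that kink and $+|m_i|$ to its right. Summing, $f$ is convex and piecewise linear with kinks contained in $\{\tfrac{b_i}{m_i}\}$, and on the open interval immediately to the right of the $j$-th breakpoint (in the increasing order fixed in the statement) its slope is
\[
S_j = \sum_{i=1}^{j}|m_i| - \sum_{i=j+1}^{N}|m_i|,
\]
with $S_0 = -\sum_{i}|m_i|$ far to the left and $S_N = +\sum_{i}|m_i|$ far to the right.

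The single observation driving the whole argument is that, since every $|m_i| > 0$, crossing the breakpoint $\tfrac{b_j}{m_j}$ raises the slope by exactly $2|m_j| > 0$; hence $S_0 < S_1 < \cdots < S_N$ is strictly increasing. Convexity then yields the third bullet at once: $f$ has no local maxima, and $S_0 < 0 < S_N$ makes $f$ coercive, so a global minimum is attained, at a point or an interval. Strict monotonicity of the slopes forces at most one index $j$ with $S_j = 0$, which gives the dichotomy of the first two bullets: if no $S_j$ vanishes then $f$ strictly decreases and then strictly increases, so its only non-smooth (critical) points are the breakpoints $\tfrac{b_i}{m_i}$; if some $S_j = 0$ then $f$ is constant on $\bigl[\tfrac{b_j}{m_j}, \tfrac{b_{j+1}}{m_{j+1}}\bigr]$ and that entire interval joins the breakpoints as critical.

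For the fourth bullet I would note that the two displayed inequalities defining the index $j$ are exactly $S_{j-1} < 0 \le S_j$, i.e.\ the slope passes from negative to nonnegative precisely at $\tfrac{b_j}{m_j}$; by convexity the minimum sits there, spreading to the interval on its right exactly when $S_j = 0$. The fifth bullet is then immediate: if $|m_j| > \sum_{i \ne j}|m_i|$ then the slope just left of $\tfrac{b_j}{m_j}$ is at most $\sum_{i \ne j}|m_i| - |m_j| < 0$ while the slope just right is at least $|m_j| - \sum_{i \ne j}|m_i| > 0$, pinning the minimizer at $\tfrac{b_j}{m_j}$. For the $N = 3$ bullet, ``no dominant slope'' says no $|m_i|$ exceeds the sum of the other two; in particular the slopes flanking the middle breakpoint satisfy $S_1 \le 0 \le S_2$, so the minimizing set contains the middle value, and transitional dominance of some $m_j$ turns one of these into an equality, producing the flat interval unless $\tfrac{b_j}{m_j}$ is itself the middle value.

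The only real care needed is bookkeeping. The dominance hypotheses are stated in terms of the original index $j$, whereas the slope formula is organized by the sorted order; I would therefore keep the two indexings distinct and, in each dominance argument, translate the hypothesis directly into a statement about the two slopes flanking the breakpoint $\tfrac{b_j}{m_j}$, independent of where that breakpoint falls in the sorted list. I would also note at the start that merging coincident ratios $\tfrac{b_i}{m_i}$, as the statement permits, merely replaces the corresponding $|m_i|$ by their sum and leaves every slope-sign computation intact. I expect reconciling these two indexings, rather than any analytic subtlety, to be the main point to handle carefully.
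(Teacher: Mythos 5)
Your argument is correct and complete: the slope bookkeeping $S_j = \sum_{i\le j}|m_i| - \sum_{i>j}|m_i|$, together with the observation that crossing a breakpoint raises the slope by $2|m_j|>0$, delivers every bullet of the lemma, including the careful separation of the dominance indexing from the sorted indexing. The paper explicitly leaves this proof to the reader, and your piecewise-linear slope-counting argument is exactly the standard one the authors intend (it is the mechanism illustrated in their accompanying figure), so there is nothing to reconcile.
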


See Figure~\ref{absgraphfig} for an illustration of the main results from Lemma~\ref{sumabslemma}.

\begin{figure}
\begin{picture}(250,100)
\put(-50,15){
\includegraphics[scale = .4, clip = true, draft = false]{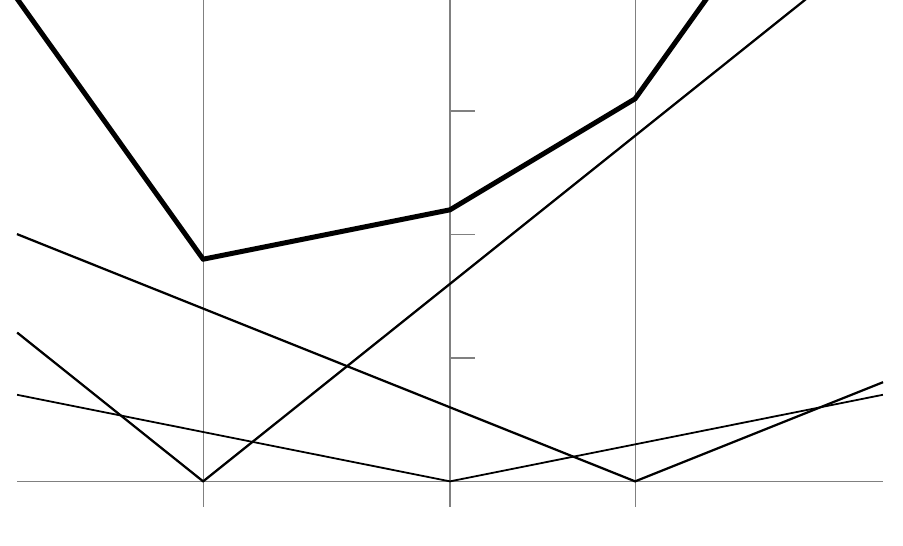}
}
\put(-28,13){$\scriptscriptstyle -2$}
\put(4,13){$\scriptscriptstyle 0$}
\put(23,13){$\scriptscriptstyle 1.5$}
\put(10,35){$\scriptscriptstyle 1$}
\put(65,15){
\includegraphics[scale = .4, clip = true, draft = false]{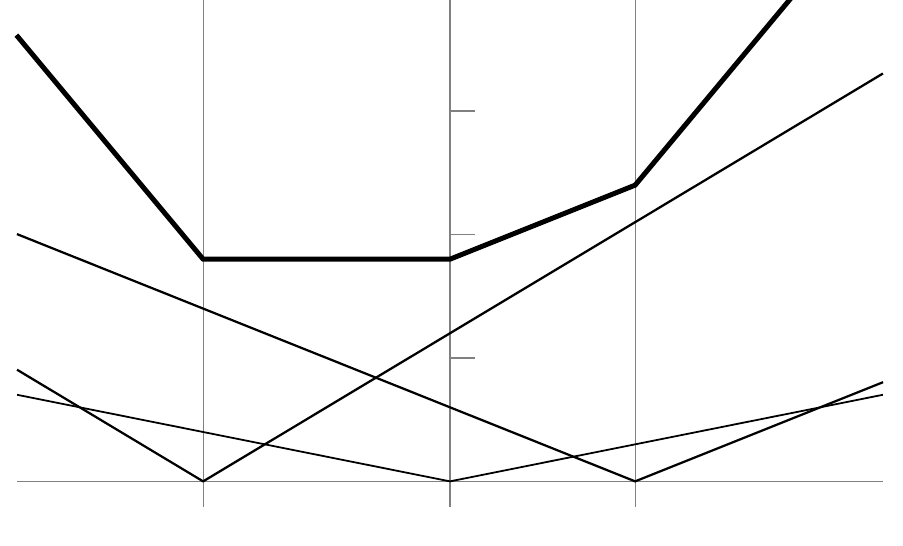}
}
\put(87,13){$\scriptscriptstyle -2$}
\put(119,13){$\scriptscriptstyle 0$}
\put(138,13){$\scriptscriptstyle 1.5$}
\put(125,35){$\scriptscriptstyle 1$}
\put(180,15){
\includegraphics[scale = .4, clip = true, draft = false]{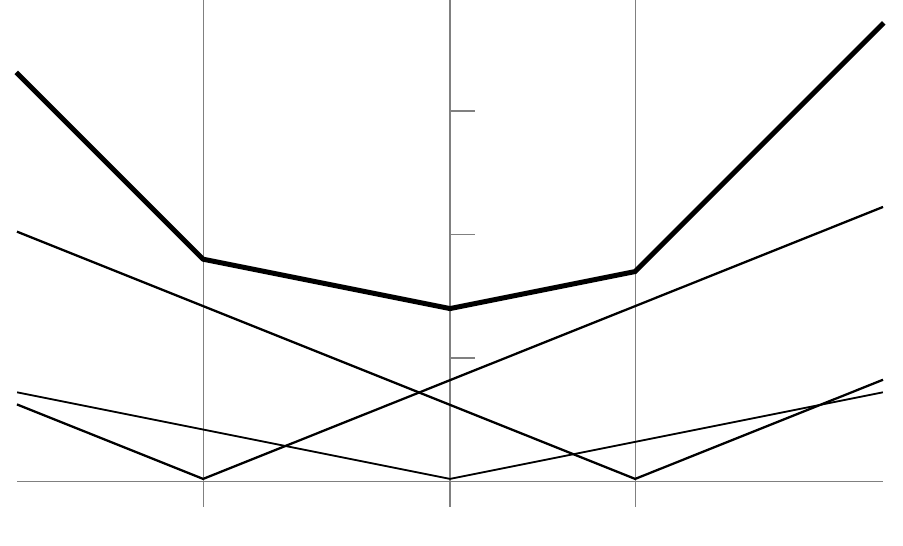}
}
\put(202,13){$\scriptscriptstyle -2$}
\put(234,13){$\scriptscriptstyle 0$}
\put(253,13){$\scriptscriptstyle 1.5$}
\put(240,49){$\scriptscriptstyle 2$}
\put(0,0){(a)}
\put(115,0){(b)}
\put(230,0){(c)}
\end{picture}
\caption{Graphs of
\[
f(t) = |b_1- m_1 t| + |b_2- m_2 t| + |b_3- m_3 t|
\]
where the darker function $f$ is the sum of the three lighter functions.  In all three cases,
\[
b_2 = 0, m_2 = .2, b_3 = .6, m_3 = .4.
\]
In (a), $b_1 = -1.6$ and $m_1 = .8$, $m_1$ dominates the other two slopes, and the global minimum occurs at $\frac{b_1}{m_1}$.  In (b), $b_1 = -1.2$ and $m_1 = .6$, $m_1$ transitionally dominates the other two slopes, and the global minimum is achieved along the interval $\left[\frac{b_1}{m_1}, \frac{b_2}{m_2}\right]$.  In (c), $b_1 = -.8$ and $m_1 = .4$, there is no dominant or transitionally dominant slope and the global minimum is achieved at the middle critical point.} \label{absgraphfig}
\end{figure}

\subsubsection{Distance between a point and a line}

Given a point $x$ and a line $\ell = \ell_a$,
\begin{align*}
d(x, \ell) &= \inf_{t \in \mathbb{R}}\, d(x, \ell(t)) \\
	&=  \inf_{t \in \mathbb{R}}\, |x_1-a_1t|+|x_2-a_2t|+|x_3-a_3t|.
\end{align*}
By Lemma~\ref{sumabslemma}, the infimum is realized at (at least) one of the values $t=\frac{x_1}{a_1}$, $t=\frac{x_2}{a_2}$, and $t=\frac{x_3}{a_3}$, and the particular minimizing value depends on whether or not one of the components $a_i$ dominates the others.
Geometrically, we can think about the distance to $\ell$ in a way that is similar to that for a plane.  Specifically, we can let a sphere $\sigma_r(x)$ grow until it touches $\ell$.  In this case, the first contact will involve a great circle of $\sigma_r(x)$.  As such, the point on $\ell$ that is closest to $x$ will always share at least one coordinate with $x$:
\[
d(x, \ell) = \min \{d_{1,2}(x, \ell), d_{1,3}(x, \ell), d_{2, 3}(x, \ell) \}.
\]

We seek a geometric way to determine which critical point minimizes the given function.   Let $P^i$ be the plane containing the $i$th coordinate axis and $\ell_a$:
\[
P^i = \{x \in \mathbb{R}^3: a_k x_j = a_j x_k\ \mathrm{where}\ i \neq j \neq k\}.
\]
Note that if $\ell$ is a coordinate line, the corresponding plane is not uniquely defined, and in fact we leave it undefined in this case.  Suppose all three components of $a$ are nonzero.  These three planes all contain $\ell_a$, and as such, they subdivide $\mathbb{R}^3$ into six wedge-shaped regions.  Let $W^i$ be the union of the two wedges, including the boundary planes, that do not have $P^i$ as part of their boundary.

\begin{lemma} \label{middlevaluelemma}
Suppose the components of $a$ are all nonzero.  The point $x$ is an element of $W^i$ if and only if $\frac{x_i}{a_i}$ is the middle value or a duplicated value in the set $\left\{\frac{x_1}{a_1}, \frac{x_2}{a_2}, \frac{x_3}{a_3} \right\}$.
\end{lemma}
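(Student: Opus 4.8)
The plan is to diagonalize the problem by passing to the coordinates $u_i = \frac{x_i}{a_i}$, which is legitimate since all $a_i$ are nonzero. Under this linear change of variables, each defining plane becomes one of the diagonal hyperplanes: since $P^i$ is cut out by $a_k x_j = a_j x_k$ with $\{i,j,k\} = \{1,2,3\}$, dividing through by $a_j a_k$ shows that $P^i$ is precisely the set where $\frac{x_j}{a_j} = \frac{x_k}{a_k}$, i.e.\ $u_j = u_k$. Thus $P^1, P^2, P^3$ correspond to $u_2 = u_3$, $u_1 = u_3$, and $u_1 = u_2$, and the line $\ell_a$ (on which $x = at$, hence $u_1 = u_2 = u_3 = t$) maps to the common diagonal shared by all three.

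Next I would identify the six wedges with the six strict orderings of $u_1, u_2, u_3$. Away from the diagonal, the three hyperplanes partition $u$-space according to which total order the triple $(u_1, u_2, u_3)$ satisfies, and crossing $P^i = \{u_j = u_k\}$ interchanges the relative order of $u_j$ and $u_k$. The key observation is then purely combinatorial: the plane $P^i = \{u_j = u_k\}$ lies on the boundary of a wedge if and only if $u_j$ and $u_k$ are adjacent in that wedge's ordering, and these two values fail to be adjacent precisely when $u_i$ sits strictly between them, that is, when $u_i$ is the middle value. Hence the two wedges \emph{not} bounded by $P^i$ are exactly the two orderings in which $u_i$ is the middle value; these two differ by swapping $u_j$ and $u_k$, confirming that they lie on opposite sides of $P^i$, as the phrase ``two wedges'' suggests.

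Finally I would account for the closure. Since $W^i$ is defined to include its bounding planes $P^j$ and $P^k$, taking closures of the two open wedges adjoins exactly the locus where $u_i$ ties with one of $u_j, u_k$, which are the duplicated-value configurations. Combining the interior and boundary descriptions yields that $x \in W^i$ if and only if $u_i = \frac{x_i}{a_i}$ is the middle value or a duplicated value in $\left\{\frac{x_1}{a_1}, \frac{x_2}{a_2}, \frac{x_3}{a_3}\right\}$, as claimed.

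I expect the main obstacle to be stating the combinatorial adjacency-versus-middle-value equivalence cleanly and verifying that the change of variables really does carry the original wedge decomposition bijectively onto the diagonal-hyperplane decomposition in $u$-space. In particular one must check that the signs of the $a_i$ do not disturb this correspondence, which they do not because $x \mapsto u$ is a linear isomorphism sending each $P^i$ onto the correct hyperplane $\{u_j = u_k\}$ and hence mapping wedges to wedges.
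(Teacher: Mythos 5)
Your argument is correct, and it shares the paper's central device: the normalization $u_i = x_i/a_i$, which sends each $P^i$ to the diagonal hyperplane $\{u_j = u_k\}$ and $\ell_a$ to the main diagonal. Where you part ways is in the final identification of the wedges. The paper finishes by restricting to the reference plane $\{y_1+y_2+y_3=1\}$, checking the six representative points whose coordinates are permutations of $0, \tfrac14, \tfrac34$, and propagating the conclusion over each wedge by the observation that the middle-value inequalities cannot reverse without crossing one of the planes. You instead identify the six open wedges with the six strict orderings of $(u_1,u_2,u_3)$ and observe that $\{u_j = u_k\}$ meets the closure of a wedge in a two-dimensional set precisely when $u_j$ and $u_k$ are adjacent in that wedge's ordering --- equivalently, that $P^i$ fails to bound a wedge exactly when $u_i$ sits in the middle. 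This replaces a verification by inspection (and a figure) with a self-contained combinatorial argument, and your explicit treatment of the closure cleanly accounts for the duplicated-value clause, which the paper dispatches in one sentence at the outset. The trade-off is that the paper's representative-point check is arguably easier to visualize and to trust at a glance, while yours generalizes more readily (e.g.\ to analogous statements in higher dimensions) and makes the ``two wedges'' structure of $W^i$ transparent. Both are complete proofs; no gap in yours.
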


\begin{proof}
First, since $P^i$ is defined by the equation $\frac{x_j}{a_j} = \frac{x_k}{a_k}$, duplicated values occur exactly on the planes $P^i$.  For all other points, if the lemma is true for one point in $W^i$, then it must be true for all points in $W^i$ since the inequalities defining the middle value cannot reverse without crossing one of the planes $P^j$.

Next, in order to avoid handling multiple cases depending on different $a$, consider the transformation $\varphi:\mathbb{R}^3 \rightarrow \mathbb{R}^3$, $y = \varphi(x)$ where $y_i = \frac{x_i}{a_i}$.  Note that $\frac{x_i}{a_i}$ is the middle value if and only if $y_i$ is the middle value.  Moreover the coordinate axes are preserved (although their orientation may be reversed) and $\varphi(\ell_a) = \ell_{(1,1,1)}$.  Hence, $\varphi(P^i) = Q^i$ where $Q^i$ is the plane containing the $y_i$-axis and $\ell_{(1,1,1)}$.

In this setting, it is easier to find representative points in each region to confirm the result.  To help visualize convenient points and the relationships among the various objects, restrict attention to the plane $R = \{y \in \mathbb{R}^3: y_1 + y_2 + y_3 = 1\}$.  Then confirm directly that points whose coordinates are permutations of $0,$ $\frac{1}{4}$, and $\frac{3}{4}$ establish the result.  For example, the point $\left( \frac{1}{4}, 0, \frac{3}{4} \right)$ lies in one of the wedges defined by $Q^2$ and $Q^3$, and the first coordinate is the middle value.  See Figure~\ref{disttolinefig}.
\end{proof}

\begin{figure}
\begin{picture}(250,230)
\put(0,0){
\includegraphics[scale = 1, clip = true, draft = false]{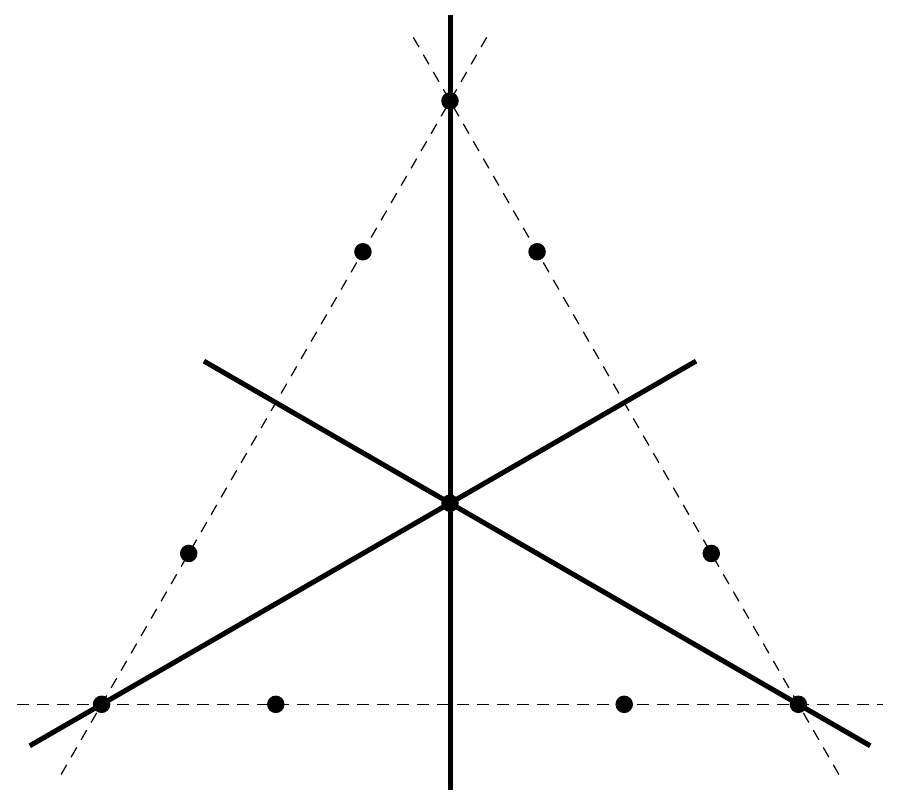}
}
\put(-4,36){\scriptsize $(1, 0, 0)$}
\put(235,36){\scriptsize $(0, 1, 0)$}
\put(141,201){\scriptsize $(0, 0, 1)$}
\put(145,84){\scriptsize $\ell$}
\put(208,130){\scriptsize $Q^1$}
\put(51,130){\scriptsize $Q^2$}
\put(130,-5){\scriptsize $Q^3$}
\put(110,130){\scriptsize $W^1$}
\put(159,47){\scriptsize $W^1$}
\put(145,130){\scriptsize $W^2$}
\put(101,47){\scriptsize $W^2$}
\put(172,80){\scriptsize $W^3$}
\put(83,80){\scriptsize $W^3$}
\put(67,13){\scriptsize $\left(\frac{3}{4}, \frac{1}{4}, 0 \right)$}
\put(167,13){\scriptsize $\left(\frac{1}{4}, \frac{3}{4}, 0 \right)$}
\put(214,75){\scriptsize $\left(0, \frac{3}{4}, \frac{1}{4} \right)$}
\put(164,161){\scriptsize $\left(0, \frac{1}{4}, \frac{3}{4} \right)$}
\put(19,75){\scriptsize $\left( \frac{3}{4}, 0, \frac{1}{4} \right)$}
\put(69,161){\scriptsize $\left( \frac{1}{4}, 0, \frac{3}{4} \right)$}
\end{picture}
\caption{The reference plane $R$ showing its intersection with various objects and the representative points in each wedge.  Note that if the coordinate with the value $\frac{1}{4}$ is in the $i$th position then that point is in the double-wedge  $W^i$.} \label{disttolinefig}
\end{figure}

Now that we have a geometric way to identify the middle value in the set $\left\{\frac{x_1}{a_1}, \frac{x_2}{a_2}, \frac{x_3}{a_3} \right\}$, we can establish an explicit way to determine the distance between a point and a line.

\begin{thm} \label{disttolinethm}
Given a line $\ell_a$ and a point $x$,
\begin{itemize}
\item if there is a permutation $(i, j, k)$ of $(1, 2, 3),$ such that $a_i$ dominates or transitionally dominates $a_j$ and $a_k$ then $d(x,\ell)=d_{j,k}(x,\ell)$;
\item otherwise when $x$ lies in the double-wedge $W^i$, then $d(x,\ell) = d_{j,k}(x, \ell)$, where again $(i, j, k)$ is a permutation of $(1, 2, 3)$.
\end{itemize}
In either case,
\begin{equation} \label{disttolineeq}
d_{j,k}(x, \ell) =  d\left(x,\ell\left(\frac{x_i}{a_i}\right)\right)
			= \left| x_j-\frac{a_j}{a_i} x_i \right|+\left| x_k-\frac{a_k}{a_i} x_i \right|.
\end{equation}
\end{thm}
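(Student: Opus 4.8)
The plan is to recognize $d(x,\ell)$ as the infimum of a sum of absolute values and then apply the two preceding lemmas to locate the minimizer and evaluate the formula. First I would write
\[
d(x, \ell) = \inf_{t \in \mathbb{R}} f(t), \qquad f(t) = |x_1 - a_1 t| + |x_2 - a_2 t| + |x_3 - a_3 t|,
\]
observing that $f$ is exactly the function studied in Lemma~\ref{sumabslemma} with $N = 3$, $b_i = x_i$, and $m_i = a_i$. Thus the candidate minimizers are among the critical points $t = \frac{x_i}{a_i}$ corresponding to nonzero $a_i$, and it suffices to determine which critical point realizes the minimum in each case and then substitute it back into $f$.

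For the first bullet, suppose some $a_i$ dominates or transitionally dominates the other two components. The $N = 3$ special case of Lemma~\ref{sumabslemma} states directly that the global minimum of $f$ is realized at $t = \frac{x_i}{a_i}$ (noting that the dominance hypothesis and $a \neq 0$ force $a_i \neq 0$). This case also covers the degenerate situations where one or both of $a_j, a_k$ vanish, such as a coordinate line. For the second bullet, suppose instead that no component dominates or transitionally dominates the others. I would first observe that this forces all three components of $a$ to be nonzero: if, say, $a_k = 0$, then the dominance condition among the remaining two reduces to comparing $|a_i|$ and $|a_j|$, and one of them necessarily dominates or transitionally dominates, a contradiction. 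Hence Lemma~\ref{middlevaluelemma} applies, and combining it with the $N = 3$ case of Lemma~\ref{sumabslemma} (which places the minimum at the middle value of $\left\{\frac{x_1}{a_1}, \frac{x_2}{a_2}, \frac{x_3}{a_3}\right\}$) shows that the minimizer is $t = \frac{x_i}{a_i}$ precisely when $x \in W^i$.

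In both cases the minimizer is $t = \frac{x_i}{a_i}$, so I would finish by evaluating $f$ there. Since the $i$-th term becomes $\left|x_i - a_i \cdot \frac{x_i}{a_i}\right| = 0$, the value collapses to
\[
f\!\left(\frac{x_i}{a_i}\right) = \left|x_j - \frac{a_j}{a_i}x_i\right| + \left|x_k - \frac{a_k}{a_i}x_i\right|,
\]
which is Equation~\eqref{disttolineeq}. To see that this equals $d_{j,k}(x, \ell)$, note that the point $\ell\!\left(\frac{x_i}{a_i}\right)$ has $i$-th coordinate $a_i \cdot \frac{x_i}{a_i} = x_i$, so it shares its $i$-th coordinate with $x$ and hence lies on the $(j,k)$-coordinate plane through $x$; the distance from $x$ to it is therefore the partial distance $d_{j,k}(x, \ell)$.

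I expect the only genuine obstacle to be the careful bookkeeping for zero components of $a$, in particular verifying that the ``otherwise'' case truly requires all $a_i \neq 0$ so that Lemma~\ref{middlevaluelemma} is applicable, while the substitution and the identification with $d_{j,k}$ are routine.
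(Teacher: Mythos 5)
Your proposal is correct and follows essentially the same route as the paper: identify $d(x,\ell(t))$ with the function in Lemma~\ref{sumabslemma}, use the dominance clause for the first bullet and Lemma~\ref{middlevaluelemma} together with the $N=3$ case for the second, then substitute $t = \frac{x_i}{a_i}$ to obtain Equation~\eqref{disttolineeq}. Your added remarks -- that the ``otherwise'' case forces all $a_i \neq 0$ and that the minimizing point shares its $i$th coordinate with $x$ so the value is $d_{j,k}(x,\ell)$ -- are correct and slightly more explicit than the paper's own write-up.
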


To clarify the subtlety in this result, note that in the first case $a$ determines which partial distance function to use independent of $x$, while in the second case $a$ determines the double-wedges and then with that structure in place, the location of $x$ determines which partial distance function to use.  Also note that if $x \in P^i$ then it lies in two double-wedges and both corresponding partial distance formulas apply, and if $x \in \ell$, all three partial distance formulas apply, and they all show that $d(x, \ell) = 0$.

\begin{proof}
Suppose first that there is a permutation $(i, j, k)$ of $(1, 2, 3),$ such that $a_i$ dominates or transitionally dominates the other two coordinates.  Then by Lemma~\ref{sumabslemma} the distance function $d(x, \ell(t))$ is minimized when $t = \frac{x_i}{a_i}$.

If there is no dominant or transitionally dominant coordinate of $a$ then none of the coordinates can be zero and $d(x, \ell)$ is determined by the middle value or duplicated value in  $\left\{\frac{x_1}{a_1}, \frac{x_2}{a_2}, \frac{x_3}{a_3} \right\}$, where in Lemma~\ref{sumabslemma}, the case of three distinct values corresponds to the special case where $N = 3$ and the case of duplicate values corresponds to $N = 2$ or, if it happens that $x$ is a multiple of $a$, then $N = 1$.  By Lemma~\ref{middlevaluelemma}, the middle or duplicate value is determined by the double-wedge in which $x$ resides.  If $x \in W^i$, the middle or duplicate value is $\frac{x_i}{a_i}$.

In either case, by plugging in directly we have
\begin{align*}
d(x, \ell) &= d\left( x,\ell \left(\frac{x_i}{a_i} \right) \right) \\
		&= \left| x_j-\frac{a_j}{a_i} x_i \right|+\left| x_k-\frac{a_k}{a_i} x_i \right|.
\end{align*}

\end{proof}

In light of Theorem~\ref{disttolinethm}, we say $\ell$ is \textit{steep} if $a_3$ is the dominant coordinate, we say $\ell$ is \textit{shallow} if $a_1$ or $a_2$ is dominant, we say $\ell$ is transitional if any coordinate is transitionally dominant, and we say $\ell$ is \textit{intermediate} if there is no dominant or transitionally dominant value in the set $\{a_1, a_2, a_3\}$.  Finally, we say $\ell$ is horizontal if $a_3 = 0$.

\subsubsection{Parameter space for defining lines}

As for planes, the set of lines can be thought of as $\mathbb{R}P^2$ and in light of Theorem~\ref{disttolinethm} an ideal parameter space for lines is the cubeoctahedron
\[
\mathscr{L}={\mbox{\large $\partial$}} \bigl\{a \in \mathbb{R}^3: \max\{|a_1|, |a_2|, |a_3|\} \leq 1\ \mbox{and}\ d(a, 0) \leq 2\bigr\}.
\]

In $\mathscr{L}$, the square faces are characterized by those points where one coordinate of $a$ is equal to $\pm1$ and dominates the other two coordinates.  The edges and vertices correspond to parameters that have a transitionally dominant coordinate.  The triangular faces are characterized by those points where there is no dominant coordinate.

In this setting, the steep lines correspond to the top and bottom square faces, the shallow lines correspond to the other square faces, the intermediate lines correspond to the triangular faces, and the transitional lines correspond to the edges and vertices

As with the parameter space for a plane, the cuboctahedron double-counts all lines since multiplying the parameter vector by $-1$ produces the same line.

\subsubsection{An alternative parameter space}
If a defining line $\ell_{\widetilde{a}}$ is not horizontal, we can choose an alternative parameterization by dividing the parameter vector by $\widetilde{a}_3$.  As with the situation for the plane, this is similar to a gnomonic projection.  Our map is $l: \mathscr{L} \cap \{x \in \mathbb{R}^3: x_3 > 0\} \rightarrow \mathbb{R}^3$, $(a_1, a_2, a_3) = l(\widetilde{a}) = \left(\frac{\widetilde{a}_1}{\widetilde{a}_3}, \frac{\widetilde{a}_2}{\widetilde{a}_3}, 1 \right)$.  This maps the top square of $\mathscr{L}$ to a square, the upper four triangles of $\mathscr{L}$ to semi-infinite strips, and the upper halves of the vertical squares of $\mathscr{L}$ to quadrants.  See Figure~\ref{lineparameterspacefig}.  As with the alternate parameter space for planes, this space misses horizontal lines.  Again, these are represented using a circle at infinity and we have
\[
\mathscr{L}' = \{ a \in \mathbb{R}^3: a_3 = 1 \}
	\cup \{ a \in \mathbb{R}^3 \backslash \{0\}: a_3 = 0 \}.
\]
In this setting $\ell$ is steep if $a$ lies in the central square, shallow if $a$ lies in one of the four quadrants, intermediate if $a$ lies in a semi-infinite strip, and transitional if $a$ lies in a boundary between these regions.

\begin{figure}
\begin{picture}(250,130)
\put(-70,-35){
\includegraphics[scale = .7, clip = true, draft = false]{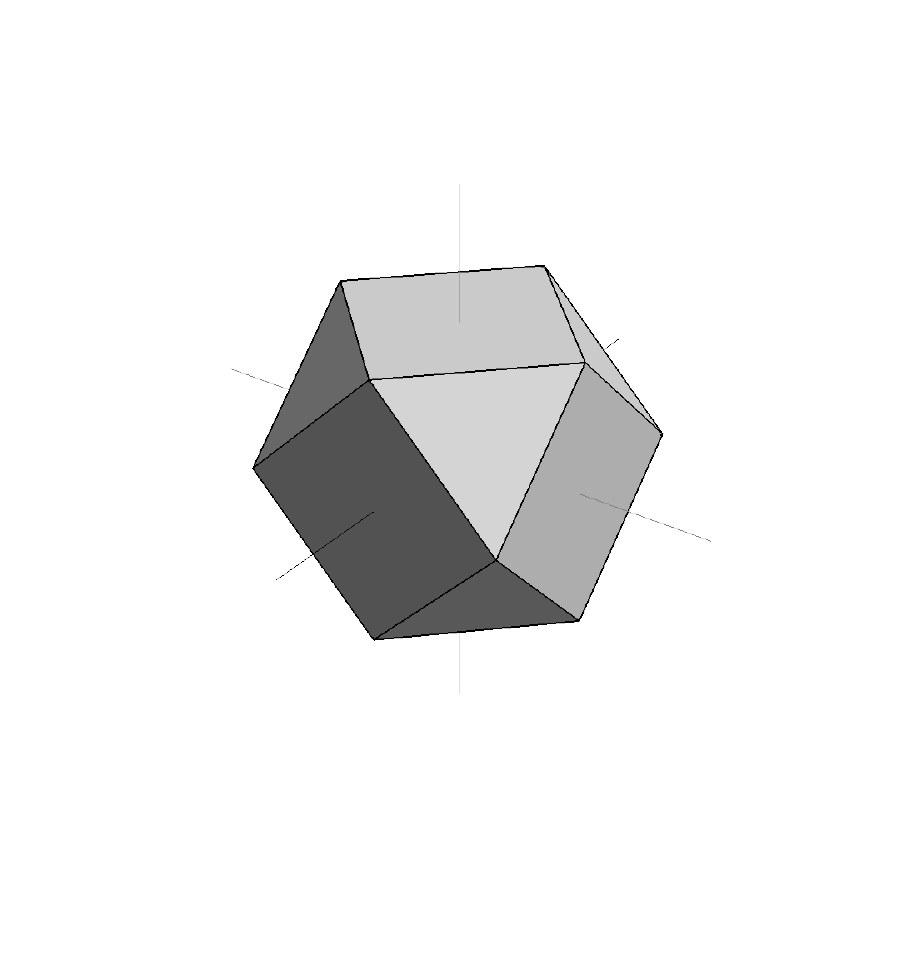}
}
\put(130,0){
\includegraphics[scale = .5, clip = true, draft = false]{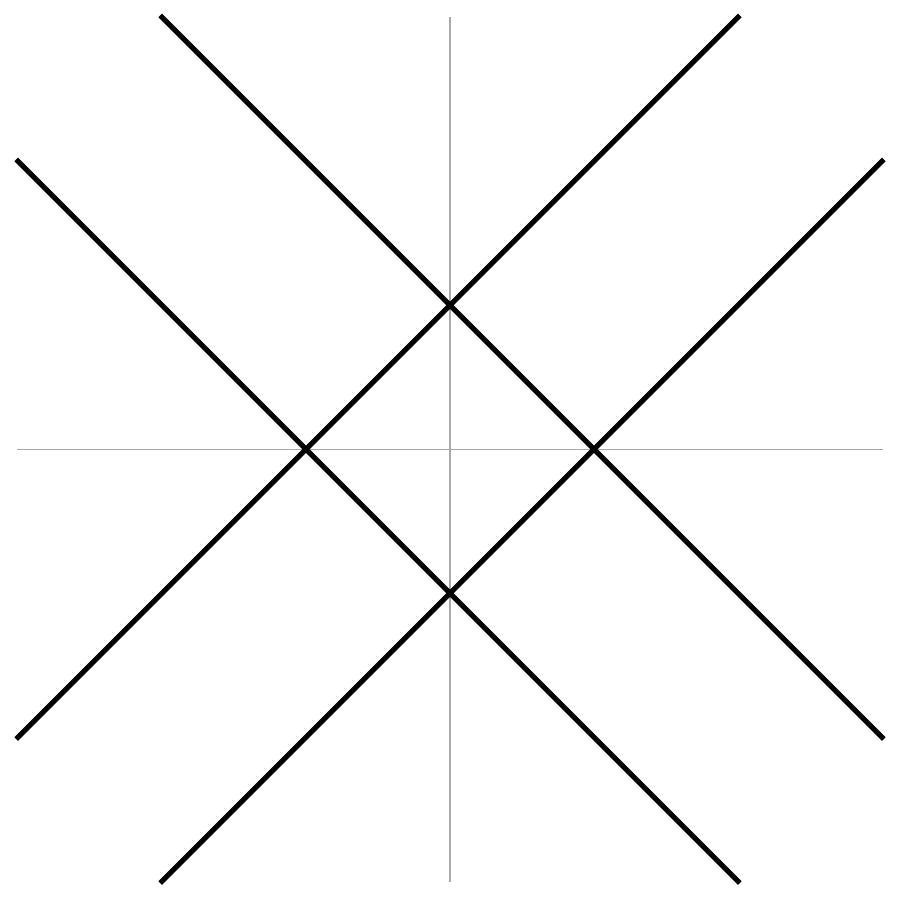}
}
\put(-21,39){$\widetilde{a}_1$}
\put(78,46){$\widetilde{a}_2$}
\put(30,120){$\widetilde{a}_3$}
\put(265,62){$a_1$}
\put(205,125){$a_2$}
\put(20,-7){$\mathscr{L}$}
\put(192,-7){$\mathscr{L}'$}
\end{picture}
\caption{Two parameter spaces for lines.  The cuboctahedron $\mathscr{L}$ is a natural choice because of the way distance between a point and a line is measured.  The planar parameter space $\mathscr{L}'$ is a good choice in the context of conic sections because of how we will deal with the slicing plane.  Note that for $\mathscr{L}'$ the circle at infinity is not shown.} \label{lineparameterspacefig}
\end{figure}

\subsection{The slicing plane revisited}

As discussed in Section~\ref{slicingplanesec}, we will use $S = \{x \in \mathbb{R}^3: x_3 = 1\}$ as our slicing plane.  This plane will serve a number of distinct but related purposes.

First, and most importantly, it will be the plane that contains the conic section resulting from a given cone.  Moreover, since $S$ is a coordinate plane, and thus inherits the taxicab metric, the third coordinate can be dropped and the various sets of interest can be interpreted as lying in usual 2-dimensional taxicab space.

Second, as long as the defining line $\ell$ is not horizontal, it will intersect $S$ and the point $a = \ell \cap S = (a_1, a_2, 1)$ can be interpreted as the parameter for $\ell$ from $\mathscr{L}'$.  As such, we will identify $S$ with the set of non-horizontal parameters in $\mathscr{L}'$.

Third, as long as the defining plane $P$ is not horizontal, it will intersect $S$ along a line.  If $A \in \mathscr{P}'$ defines $P$, then this line is
\[
P^S = P \cap S = \{A_1 x_1 + A_2 x_2 + \delta = 0\}.
\]
As above, we could identify the set of non-vertical parameters in $\mathscr{P}'$ with $S$, but we find that $P^S$ will be a more geometrically useful way to identify $P$.  An example of this utility is that a cone is degenerate if and only if $a \in P^S$.  Observe that $P$ is steep if $P^S$ intersects the open taxicab disk defined by $\sigma_1(0)$, it is transitional if $P^S$ intersects $\sigma_1(0)$ but not the open disk, and it is shallow otherwise.  This is convenient since $\sigma_1(0)$ also plays a role in $\mathscr{L}'$.

Fourth, as long as $P^i$ is defined and not horizontal, it will intersect $S$ along a line $\rho^i$ which we call a reference line.  Note that $\rho^1 = \{x \in S: x_2 = a_2\}$ and $\rho^2 = \{x \in S: x_1 = a_1\}$ are the coordinate lines through $a$ and $\rho^3 = \{x \in S: a_1 x_2 = a_2 x_1\}$ is the line through $(0,0,1)$ and $a$.

These lines are useful since they identify transitions in the formula for the distance between a point and a line.  In this role, they serve two subtly different purposes.  First, when $\ell$ is intermediate, the double-wedges defined by $\ell$ determine which partial distance to use to measure the distance between a point and $\ell$.  Since the reference lines $\rho^i$ are the boundaries of these double-wedges restricted to $S$, they identify transitions in which partial distance formula is to be used.  In this case, we say all three reference lines are active.  Second, if $\ell$ is not intermediate, the distance between a point and $\ell$ is determined by a single partial distance, and two of the reference lines indicate transitions in how the absolute value expressions in the partial distance resolve.  When $d_{j,k}(x, \ell)$ is being used, the reference lines $\rho^j$ and $\rho^k$ indicate the transitions and we say these two lines are active, while the third is inactive.

Finally, when $\ell$ is horizontal, the point $a$ does not lie in $S$, nor do $\rho^1$ or $\rho^2$.  However, $\rho^3$ is defined, and serves as a representative for the point at infinity that parameterizes $\ell$.

\subsubsection{Wedges revisited} \label{wedgesec}
While double-wedges $W^i$ are defined whenever all three $a_i$ are non-zero, they are most useful when $\ell$ is intermediate.  If $\ell$ is not intermediate and only two reference lines are active, the corresponding planes divide $\mathbb{R}^3$ into four wedges similar to the six wedges described previously.  The six wedges in the intermediate case and the four wedges in the non-intermediate case will both aid in the construction of taxicab conics.  As such, moving forward it should be understood that, when wedges are discussed, the ones being used depend on whether or not $\ell$ is intermediate and we indicate this by referring to the wedges as active.  The intersections of these 3-dimensional wedges and $S$ are 2-dimensional wedges separated by active reference lines.

See Figure~\ref{usesofslicingplanefig} for the various ways that information can be encoded in the slicing plane $S$.

\begin{figure}
\begin{picture}(215,210)
\put(0,0){
\includegraphics[scale = .8, draft = false]{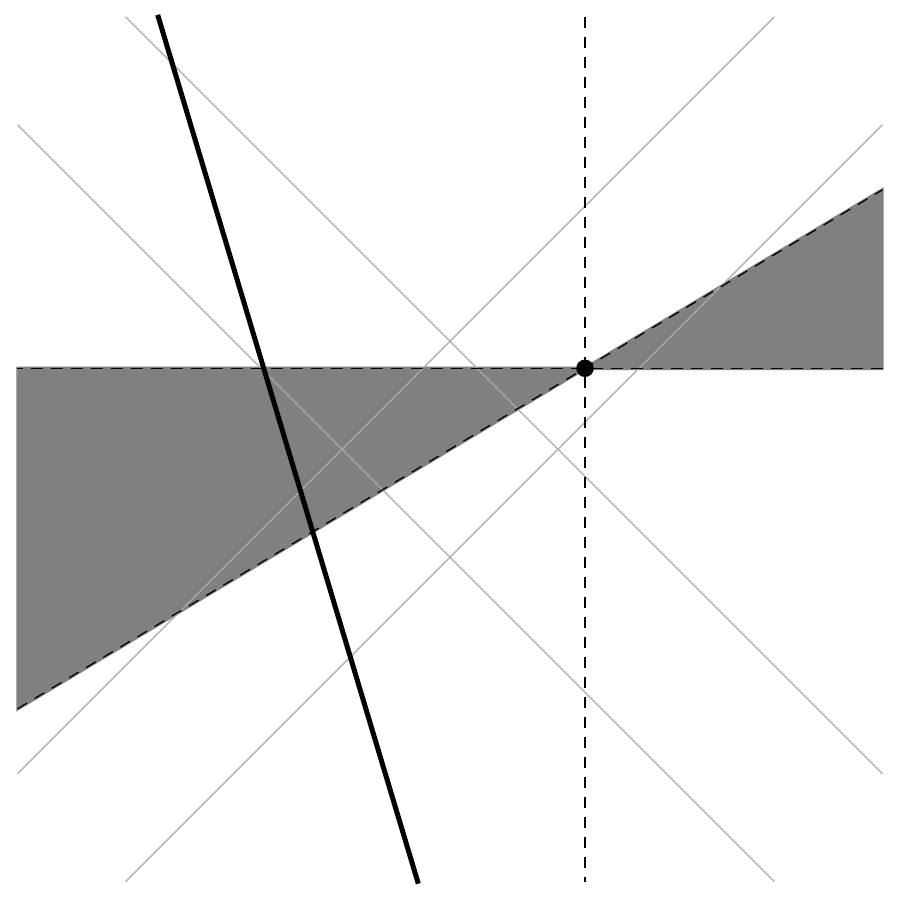}
}
\put(130,125){$a$}
\put(103,0){$P^S$}
\put(210,120){$\rho^1$}
\put(136,209){$\rho^2$}
\put(212,165){$\rho^3$}
\end{picture}
\caption{The slicing plane $S$ and its intersections with $\ell$ at $a$, $P$ at $P^S$,  the planes $P^i$ at the reference lines $\rho^i$, and the double-wedges $W^i$ ($W^2$ shaded).  Using the identification of $\mathscr{L}'$ with $S$, the gray lines indicate transitionally dominant parameters.  From this we can see that $\ell$ is intermediate since $a$ lies in a semi-infinite strip, and $P$ is shallow since $P^S$ does not intersect the central taxicab circle $\sigma_1(0)$.} \label{usesofslicingplanefig}
\end{figure}

\section{Conic sections when $\ell$ is non-horizontal} \label{conicsectionswhenellisnonhorizontalsection}

With the structure developed above, we are now ready to understand and characterize conic sections.  We explore the case where $\ell$ is not horizontal here.  The case where $\ell$ is horizontal requires slightly different analysis and is covered in the next section.  Using $A \in \mathscr{P}'$, and $a \in \mathscr{L}'$, we begin by finding the vertices of the conic section, which will lie on the active reference lines determined by $\ell_a$.  Once these vertices are known, the conic section will consist of line segments or rays connecting these vertices.

\subsection{Vertices of the sections}

In all cases, we are analyzing the equation $d(x, \ell) = \kappa\, d(x, P)$, where $x \in S$.
For the distance to the plane, from Equation~\eqref{disttoplaneeq} we have
\[
d(x, P) = \frac{|A_1 x_1 + A_2 x_2 + \delta|}{M}
\]
where $\delta = 1$ if $P$ is not vertical and $0$ if $P$ is vertical, and $M = \max \{|A_1|, |A_2|, \delta\}$.

For the distance to the line, from Equation~\eqref{disttolineeq} we have
\begin{align*}
d_{1, 2}(x, \ell) &= |x_1 - a_1| + |x_2 - a_2|, \\
d_{1, 3}(x, \ell) &=\left|x_1 - \frac{a_1}{a_2} x_2 \right| + \left| 1 - \frac{1}{a_2} x_2 \right|, \\
d_{2, 3}(x, \ell) &=\left|x_2 - \frac{a_2}{a_1} x_1 \right| + \left| 1 - \frac{1}{a_1} x_1 \right|
\end{align*}

where, by virtue of Theorem~\ref{disttolinethm}, the choice of which partial distance to use depends on the location of $a$ and $x$.

Measuring the distance between a point on a reference line and $\ell$ reduces to a single absolute difference.  We can see this in two ways.  First, as mentioned before, these reference lines identify transitions in either the partial distance being used, or the way in which the absolute value expressions resolve.  Second, and more geometrically, the points $x$ in these reference lines correspond to those points in $S$ where the sphere measuring the distance form $x$ to $\ell$ intersects $\ell$ at a vertex.  Because of these facts, determining the points in the conic section on the reference lines is relatively simple and these points are important because they are the vertices of the conic section.

\begin{thm} \label{nonhorizontalverticesthm}
Given a cone $C(\ell_a, P_A, \kappa)$ where $a = (a_1, a_2, 1)$ and $A = (A_1, A_2, \delta)$, and slicing plane $S = \{x \in \mathbb{R}^3 :x_3 = 1\}$,
\begin{itemize}
\item if the reference line $\rho^1 = \{x \in S:  x_2 = a_2\}$ is active then the vertices lying on $\rho^1$ are
 \begin{equation} \label{v1formulaeqn}
 v^{1\pm} = \left(a_1 + \frac{A_1 a_1 + A_2 a_2 + \delta}{\pm \frac{M}{\kappa} - A_1}, a_2 , 1\right);
 \end{equation}
\item if the reference line $\rho^2 = \{x \in S:  x_1 = a_1\}$ is active then the vertices lying on $\rho^2$ are
 \begin{equation} \label{v2formulaeqn}
 v^{2\pm} = \left(a_1, a_2 + \frac{A_1 a_1 + A_2 a_2 + \delta}{\pm \frac{M}{\kappa} - A_2}, 1 \right);
 \end{equation}
 \item if the reference line $\rho^3 = \{x \in S: a_1 x_2 = a_2 x_1\}$ is active then the vertices lying on $\rho^3$ are
 \begin{equation} \label{v3formulaeqn}
v^{3 \pm} = (r^{\pm} a_1, r^{\pm} a_2, 1)
 \end{equation}
where
\[
r^{\pm} = 1 + \frac{A_1 a_1 + A_2 a_2 + \delta}{\pm \frac{M}{\kappa} - (A_1 a_1 + A_2 a_2)}.
\]
\end{itemize}
In each of the above, the sign choice in the superscript corresponds to the choice in the formula for that vertex.
\end{thm}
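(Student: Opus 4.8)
The plan is to find, on each active reference line, the points $x \in S$ satisfying $d(x,\ell) = \kappa\, d(x,P)$, exploiting the fact (noted just before the theorem) that on a reference line the distance to $\ell$ collapses to a single absolute value. First I would treat $\rho^1 = \{x \in S : x_2 = a_2\}$. Since $\rho^1$ is active, the relevant partial distance is one of the formulas of Theorem~\ref{disttolinethm} that uses the first coordinate as the varying one; substituting $x_2 = a_2$ (and $x_3 = 1$) into the appropriate $d_{j,k}$ should cause one absolute-value term to vanish, leaving $d(x,\ell) = |x_1 - a_1|$. Plugging in the plane-distance expression $d(x,P) = |A_1 x_1 + A_2 a_2 + \delta|/M$, the defining equation becomes
\begin{equation*}
|x_1 - a_1| = \frac{\kappa}{M}\,|A_1 x_1 + A_2 a_2 + \delta|.
\end{equation*}
This is a single equation in the unknown $x_1$ with absolute values on both sides.

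The key computational step is to resolve the absolute values. Rather than splitting into sign cases, I would observe that a single equation $|u| = c|v|$ with $c \geq 0$ is equivalent to the pair $u = \pm c\,v$, and solve each linearly for $x_1$. Writing $u = x_1 - a_1$ and clearing, one gets $x_1 - a_1 = \pm\frac{\kappa}{M}(A_1 x_1 + A_2 a_2 + \delta)$; solving for $x_1 - a_1$ should yield exactly the displacement $\dfrac{A_1 a_1 + A_2 a_2 + \delta}{\pm \frac{M}{\kappa} - A_1}$ once I collect the $x_1$ terms and recognize $A_1 a_1 + A_2 a_2 + \delta = A_1 x_1 + A_2 a_2 + \delta$ evaluated at $x_1 = a_1$ plus the correction. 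Concretely, substituting $A_1 x_1 = A_1 a_1 + A_1(x_1 - a_1)$ converts the right-hand numerator into $A_1 a_1 + A_2 a_2 + \delta + A_1(x_1-a_1)$, and then isolating $x_1 - a_1$ produces Equation~\eqref{v1formulaeqn}. The computation for $\rho^2$ is identical with the roles of the first and second coordinates swapped, giving Equation~\eqref{v2formulaeqn}.

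For $\rho^3 = \{x \in S : a_1 x_2 = a_2 x_1\}$ the natural move is to parameterize the line through $(0,0,1)$ and $a$ as $x = (r a_1, r a_2, 1)$, so $r$ is the single scalar unknown. On this line the point closest to $\ell$ again shares a coordinate structure with $\ell$, so $d(x,\ell)$ reduces to $|r - 1|$ times an appropriate factor (since $a = \ell \cap S$ corresponds to $r = 1$); meanwhile $d(x,P) = |r(A_1 a_1 + A_2 a_2) + \delta|/M$. Setting up $|r-1| = \frac{\kappa}{M}|r(A_1 a_1 + A_2 a_2) + \delta|$ and solving the $\pm$ pair for $r$ as above should give $r^\pm = 1 + \dfrac{A_1 a_1 + A_2 a_2 + \delta}{\pm\frac{M}{\kappa} - (A_1 a_1 + A_2 a_2)}$, which is Equation~\eqref{v3formulaeqn}.

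The main obstacle I anticipate is justifying cleanly that on each reference line the distance to $\ell$ really does reduce to the claimed single absolute value with the right coefficient — in particular verifying, via Theorem~\ref{disttolinethm} and the active/inactive classification of reference lines, that the correct partial distance $d_{j,k}$ is being used and that the vanishing term genuinely vanishes there. For $\rho^3$ there is the additional subtlety of correctly extracting the multiplicative factor relating $d(x,\ell)$ to $|r-1|$, which depends on the dominance type of $a$; I would handle this by working the substitution through each applicable $d_{j,k}$ formula and checking that the resulting equation is independent of which active formula is chosen, so that the vertex formula is well defined wherever more than one partial distance applies (as flagged in the remark following Theorem~\ref{disttolinethm} for points on the planes $P^i$).
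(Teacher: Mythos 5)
Your proposal is correct and follows essentially the same route as the paper: on each active reference line the distance to $\ell$ collapses to a single absolute value ($|x_1-a_1|$, $|x_2-a_2|$, or $|1-r|$ in your $r$-parameterization of $\rho^3$), the defining equation $|u|=\tfrac{\kappa}{M}|v|$ is split into the pair $u=\pm\tfrac{\kappa}{M}v$, and each linear equation is solved for the displacement from $a$, with the $|a_1|\geq|a_2|$ versus $|a_2|\geq|a_1|$ symmetry handling the choice of partial distance on $\rho^3$. The only cosmetic difference is that the paper solves for $x_1$ on $\rho^3$ and then recovers $x_2$, whereas you solve directly for the scalar $r$; the algebra and the justification are otherwise identical.
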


\begin{proof}
If the reference line $\rho^1$ is active, then on this line $d(x, \ell) = |x_1 - a_1|$ so the points in the conic section on $\rho^1$ are the solutions to
\[
 |x_1 - a_1| =  \kappa\, \frac{|A_1 x_1 + A_2 a_2 + \delta|}{M}.
 \]
 Resolving the absolute values results in two different equations, and the two solutions are
 \begin{align*}
 x_1 &= \frac{ \pm \frac{M}{\kappa} a_1 + A_2 a_2 + \delta}{\pm \frac{M}{\kappa} - A_1} \\
 	&= a_1 + \frac{A_1 a_1 + A_2 a_2 + \delta}{\pm \frac{M}{\kappa} - A_1}.
 \end{align*}
The calculation for the reference line $\rho^2$ is similar.

If the reference line $\rho^3$ is active, consider two cases.  First, if
$|a_1| \geq |a_2|$ then on $\rho^3$, $d(x, \ell) = d_{2,3}(x, \ell) = \left|1 - \frac{1}{a_1} x_1 \right|$ so the points in the conic section on this line are the solutions to
\[
 \left| 1 - \frac{1}{a_1} x_1 \right| =  \kappa \, \frac{\left|A_1 x_1 + A_2 \frac{a_2}{a_1}x_1 + \delta \right|}{M}.
 \]
Resolving the absolute values results in two different equations for $x_1$.  Once $x_1$ is found, $x_2$ can be computed resulting in
\begin{align*}
x_i &= \frac{\delta \pm \frac{M}{\kappa}}{\pm \frac{M}{\kappa} - A_1 a_1 - A_2 a_2}a_i \\
	&= \left(1 + \frac{A_1 a_1 + A_2 a_2 + \delta}{\pm \frac{M}{\kappa} - (A_1 a_1 + A_2 a_2)} \right) a_i.
\end{align*}

For the second case, note that this formula is symmetric in $a_1$ and $a_2$, so if $|a_2| \geq |a_1|$, the calculation analogous to that above using $d(x, \ell) = d_{1,3}(x, \ell) = \left|1 - \frac{1}{a_2} x_2 \right|$ results in the same formula.
\end{proof}

Note that the formulas above define points whether or not a reference line is active.  If a reference line is inactive, the resulting distance formula for $\ell$ does not experience a transition along this line and the corresponding points do not lie on the conic section.   Also note that it is possible for a vertex to be undefined due to a zero in the denominator and this significantly impacts the geometric nature of the resulting conic section.  In this instance, we say the vertex lies at infinity.

Equations~\eqref{v1formulaeqn}, \eqref{v2formulaeqn}, and \eqref{v3formulaeqn} can be rewritten in terms of deviations from $a$ as follows:
\begin{align}
v^{1 \pm} &= a + \left(\frac{A_1 a_1 + A_2 a_2 + \delta}{\pm \frac{M}{\kappa} - A_1}, 0, 0 \right),
			\label{v1altformulaeq} \\
v^{2 \pm} &= a + \left(0, \frac{A_1 a_1 + A_2 a_2 + \delta}{\pm \frac{M}{\kappa} - A_2}, 0 \right),
			\label{v2altformulaeq} \\
v^{3 \pm} &= a + \tilde{r}^{\pm} (a_1, a_2, 0) \label{v3altformulaeq}
\end{align}
where
\[
\tilde{r}^{\pm} = \frac{A_1 a_1 + A_2 a_2 + \delta}{\pm \frac{M}{\kappa} - (A_1 a_1 + A_2 a_2)}.
\]
These formulas will help with analysis that establish characteristics of the conic sections.

\subsection{Constructing the sections:  connecting the dots}

With the vertices known, the conic sections can be constructed by connecting certain vertices by straight lines.  This requires some care and in an attempt to simplify and clarify the statement of the next theorem, we first establish some terminology.

As discussed in Section~\ref{wedgesec}, the intersection of an active double-wedge and $S$ is defined by two active reference lines and consists of two 2-dimensional wedges.  The boundaries of these wedges are rays.  We say two such rays $\gamma$ and $\lambda$ are adjacent if together they are the boundary of an active wedge.  We say two rays $\gamma$ and $\eta$ are anti-adjacent if $\eta$ and $\lambda$ form a line and $\gamma$ and $\lambda$ are adjacent.  See Figure~\ref{adjacencyfig}(a).  Note that if there are only two active reference lines, then a pair of adjacent rays are also anti-adjacent.

\begin{figure}
\begin{picture}(300,90)
\put(-30,12){
\includegraphics[scale = .4, clip = true, draft = false]{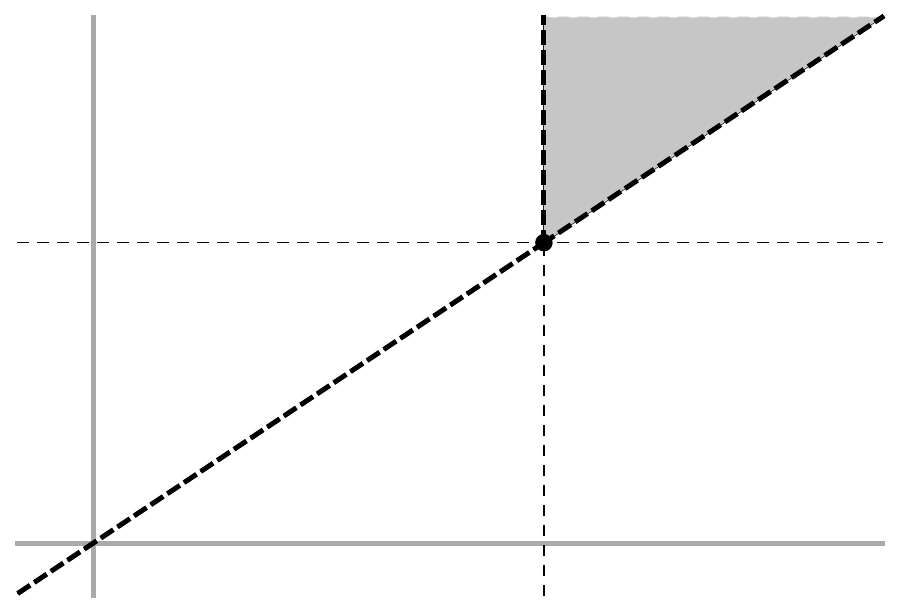}
}
\put(20,0){(a)}
\put(90,10){
\includegraphics[scale = .4, clip = true, draft = false]{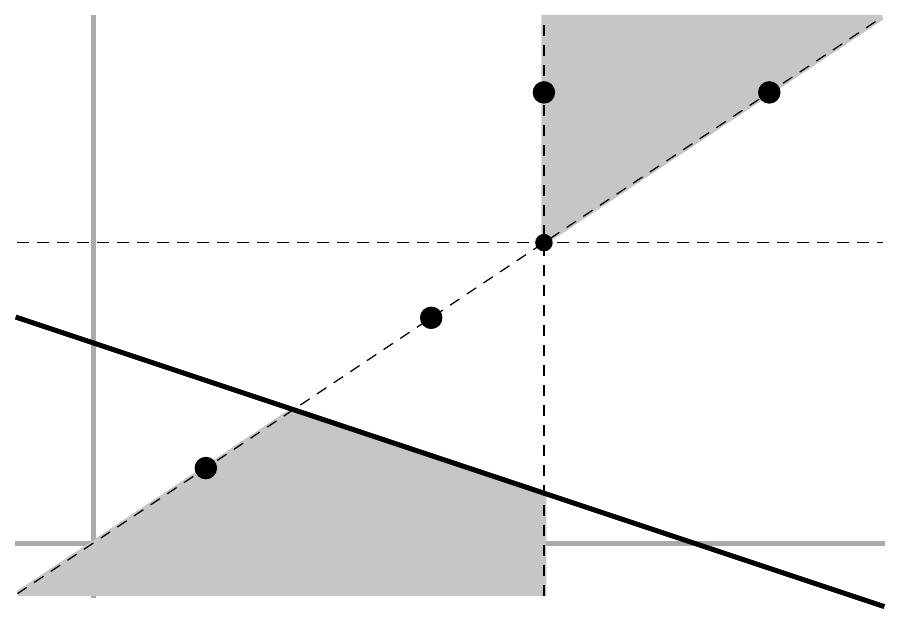}
}
\put(140,0){(b)}
\put(210,8){
\includegraphics[scale = .4, clip = true, draft = false]{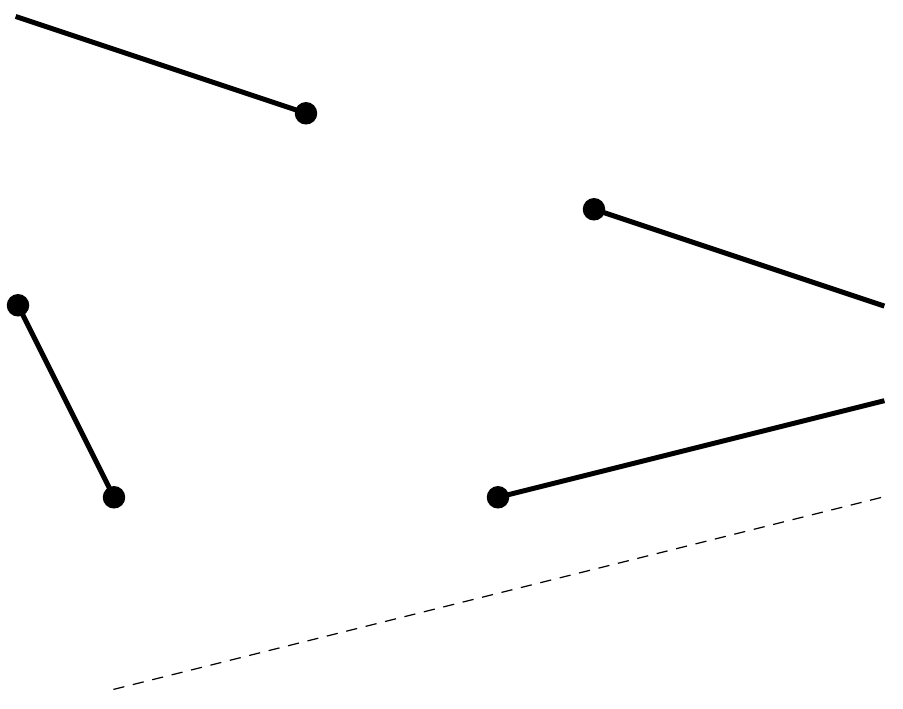}
}
\put(260,0){(c)}

\put(39,48){\scriptsize $a$}
\put(30,65){\scriptsize $\gamma$}
\put(60,63){\scriptsize $\lambda$}
\put(17, 35){\scriptsize $\eta$}
\put(160, 48){\scriptsize $a$}
\put(90, 36){$\scriptscriptstyle P^S$}
\put(150, 70){\scriptsize $p$}
\put(182,65){\scriptsize $q$}
\put(112,29){\scriptsize $r$}
\put(137,45){\scriptsize $s$}
\put(210,55)
{
\put(0,2){\scriptsize $p$}
\put(20,-26){\scriptsize $q$}
\put(42,22){\scriptsize $r$}
\put(68,14){\scriptsize $s$}
\put(52,-24){\scriptsize $v$}
\put(75,-35){\scriptsize $\rho$}
}
\end{picture}
\caption{In (a), the rays $\gamma$ and $\lambda$ are adjacent, while $\gamma$ and $\eta$ are anti-adjacent.  In (b), the point $p$ is adjacent to $q$ and anti-adjacent to $r$.  If $\rho^1$ is active, then $p$ is neither adjacent nor anti-adjacent to $s$.  If $\rho^1$ is not active then $p$ and $s$ are adjacent.  In (c), the segment associated to $p$ and $q$, the complementary rays associated to $r$ and $s$, and one of the rays associated to $v$ and $\rho$.} \label{adjacencyfig}
\end{figure}

We say two vertices are adjacent if they are on adjacent rays and they are on the same side of $P^S$.  We say two vertices are anti-adjacent if they are on anti-adjacent rays and are on opposite sides of $P^S$.  See Figure~\ref{adjacencyfig}(b).  If a vertex lies at infinity, it is interpreted as lying on both sides of $P^S$.

Let $\gamma$ be the line through two points $v$ and $w$.  We say the line segment with endpoints $v$ and $w$ is the segment associated to $v$ and $w$.  We say the points in $\gamma$ that are not part of the segment are the complementary rays associated to $v$ and $w$.  Given a point $v$ and a line $\rho$, we say the two rays based at $v$ and parallel to $\rho$ are the rays associated to $v$ and $\rho$.  See Figure~\ref{adjacencyfig}(c).

To more easily discuss the vertices and the lines they define, let $s_i \in \{+, -\}$ so that $v^{i s_i}$ is one of the two vertices on $\rho^i$.  Also let $\gamma^{i s_i j s_j}$ be the line defined by $v^{i s_i}$ and $v^{j s_j}$.

\begin{thm} \label{sectionconstructionthm}
Given a cone $C = C(\ell, P, \kappa)$ the conic section $C \cap S$ consists of the vertices on the active reference lines, the line segments associated to adjacent vertices, the complementary rays associated to anti-adjacent vertices, and, if a vertex $v$ is adjacent to a vertex at infinity on reference line $\rho$, the ray associated to $v$ and $\rho$ that does not cross $P^S$.
\end{thm}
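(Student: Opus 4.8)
\emph{Proof plan.} The plan is to reduce the statement to the piecewise-linear structure of the defining equation and then read off the three clauses from the combinatorics of sign choices. Write $g(x) = A_1 x_1 + A_2 x_2 + \delta$, so that by Equation~\eqref{disttoplaneeq} we have $d(x,P) = \frac{1}{M}|g(x)|$ on $S$ and $P^S = \{g = 0\}$; since the cone is non-degenerate, $a \notin P^S$, say $g(a) > 0$ (the opposite sign being symmetric). All active reference lines pass through $a$ (each of $\rho^1$, $\rho^2$, $\rho^3$ contains $a$), so they cut $S$ into the active wedges emanating from $a$ --- six when $\ell$ is intermediate, four otherwise. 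By Theorem~\ref{disttolinethm}, on each active wedge $W$ the distance $d(\cdot, \ell)$ is a single partial distance with its absolute values resolved, hence equals an affine form $L_W$ that is nonnegative on $W$ and vanishes only at $a$ (because $d(\cdot,\ell) = 0$ only on $\ell \cap S = \{a\}$). Fixing a wedge $W$ and a sign $\varepsilon \in \{+,-\}$, the equation $L_W(x) = \varepsilon \tfrac{\kappa}{M}\, g(x)$ is linear and defines what I will call a solution line. The conic $C \cap S$ is then the union, over all pairs $(W,\varepsilon)$, of the portion of the corresponding solution line lying in $W \cap \{\varepsilon g \geq 0\}$.

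First I would show that each such portion is a segment or ray whose finite endpoints are vertices. A portion cannot terminate on $P^S$: a point there has $g = 0$, forcing $L_W = 0$ and hence $x = a \notin P^S$, a contradiction. So a portion can only end where its solution line crosses a bounding reference line of $W$, and on $\rho^i$ that crossing satisfies exactly the one-variable equation solved in the proof of Theorem~\ref{nonhorizontalverticesthm}; its two sign resolutions produce $v^{i+}$ and $v^{i-}$, with the side of $P^S$ recorded by the sign of $g$. A portion that never re-crosses a reference line instead runs to infinity, which occurs precisely when the relevant denominator $\pm\tfrac{M}{\kappa} - A_i$ or $\pm\tfrac{M}{\kappa} - (A_1 a_1 + A_2 a_2)$ vanishes, that is, when the corresponding vertex lies at infinity.

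The heart of the argument is to group the portions by their supporting solution lines. The key computation is that the wedge $W'$ antipodal to $W$ through $a$ carries $L_{W'} = -L_W$ (both affine forms vanish at $a$, and the reflection $x \mapsto 2a - x$ negates each), so each solution line arises simultaneously from $(W,\varepsilon)$ and from the antipodal region $(W', -\varepsilon)$. According to how many of these realize a nonempty conic portion, two outcomes occur. If only one region is realized, the line meets the two reference lines bounding $W$ at two vertices lying on the same side of $P^S$ and on rays bounding the common wedge; these are adjacent vertices and the realized portion is the segment between them. If both antipodal regions are realized, the valid portions are the two outer pieces in $W \cap \{\varepsilon g \geq 0\}$ and $W' \cap \{\varepsilon g \leq 0\}$, meeting reference lines at vertices on opposite sides of $P^S$ and on anti-adjacent rays; the inner segment joining them falls in the excluded region $W \cap \{\varepsilon g < 0\}$ (which would meet $P^S$) and is omitted, so the conic contributes exactly the complementary rays. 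When one of these two vertices escapes to infinity, the shared line degenerates to a single ray from the finite vertex parallel to the reference line carrying the infinite vertex, and the non-crossing condition selects the ray on the side of $P^S$ it does not meet, giving the final clause.

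I expect the main obstacle to be carrying out this grouping uniformly and verifying that the adjacency bookkeeping is exactly right in every configuration. One must track which ray of each reference line a given signed vertex occupies relative to $P^S$, confirm that the antipodal-wedge pairing matches the formal definition of anti-adjacency (including the note that adjacent rays are also anti-adjacent when only two reference lines are active), and treat the degenerate cases --- vertices at infinity, a vertex lying on $P^S$, or two vertices coinciding at $a$ --- without double counting or omission. The conceptual facts that $C \cap S$ never meets $P^S$ and that every local portion is pinned at vertices do the real work; the remaining difficulty is purely the orientation and case bookkeeping that the definitions of adjacent and anti-adjacent were engineered to streamline.
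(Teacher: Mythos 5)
Your plan is essentially the paper's own argument: the paper likewise observes that the defining equation becomes a single linear equation on the checkerboard region $\widetilde{W}$ formed by pairing each half of a double-wedge with one side of $P^S$ (your pairing of $(W,\varepsilon)$ with the antipodal $(W',-\varepsilon)$), and then reads off segments for adjacent vertices, complementary rays for anti-adjacent ones, and rays parallel to $\rho^j$ when a vertex is at infinity. The proposal is correct and matches the paper's proof in both structure and key idea.
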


See Figure~\ref{connectthedotsfig} for an illustration of this theorem.

\begin{figure}
\begin{picture}(300,370)
\put(90,270){
\includegraphics[scale = .4, clip = true, draft = false]{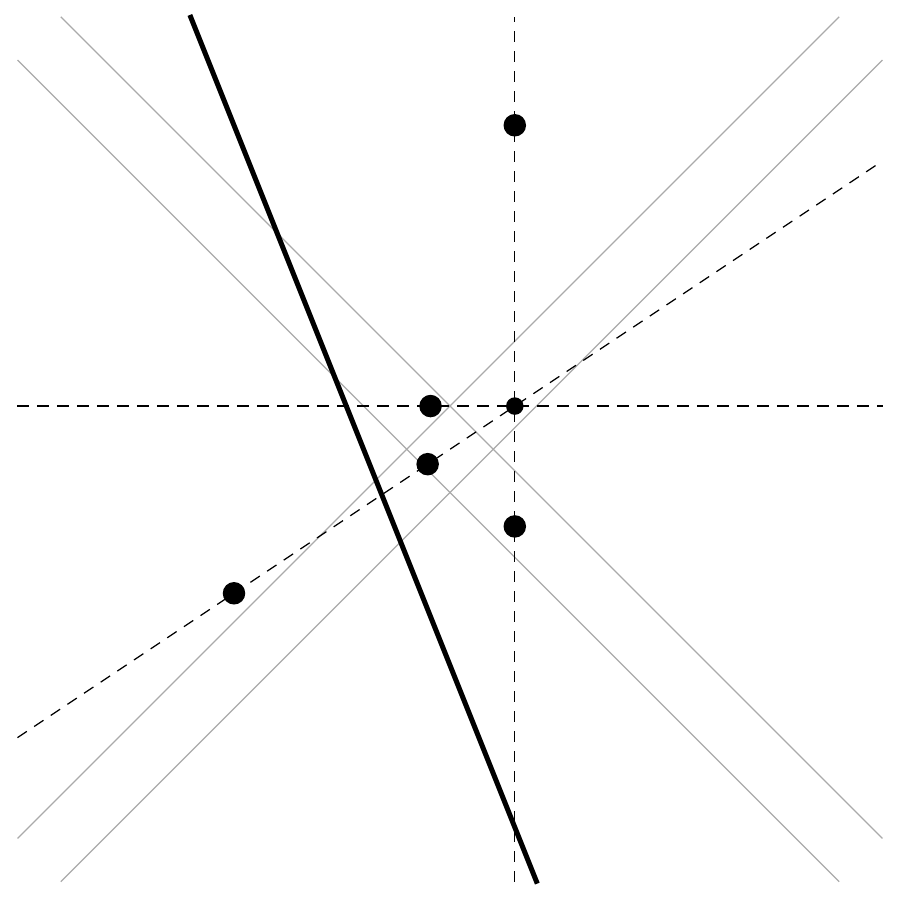}
}
\put(146,260){(a)}
\put(-30,140){
\includegraphics[scale = .4, clip = true, draft = false]{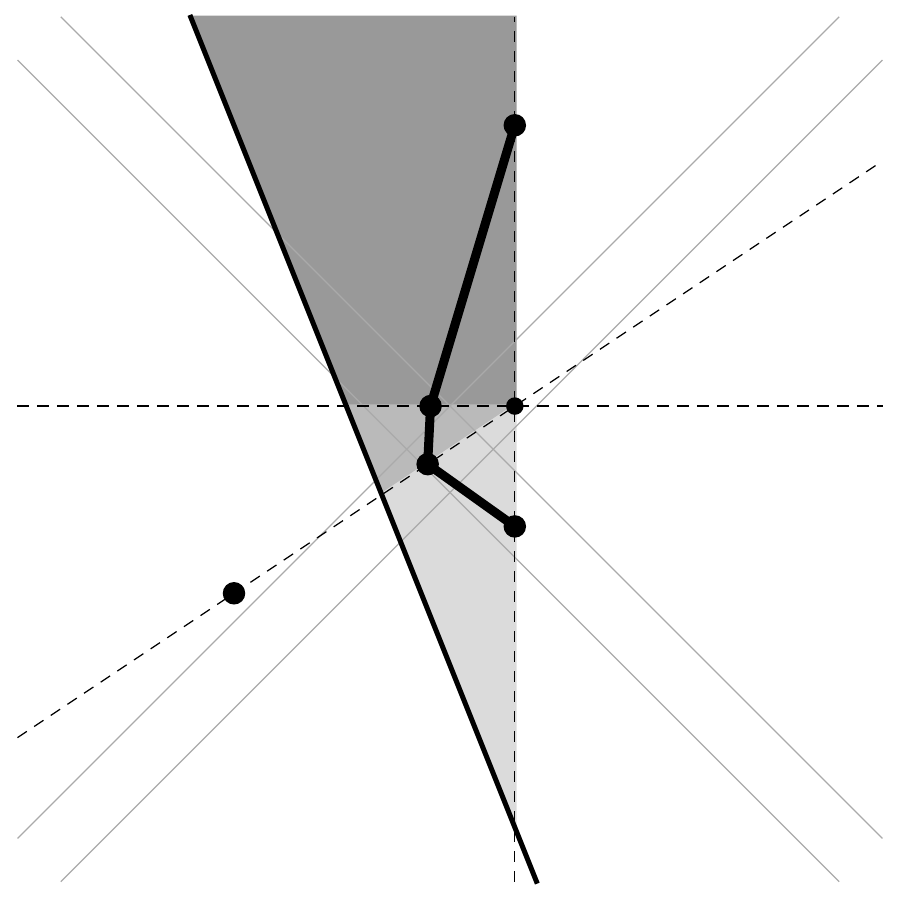}
}
\put(26,130){(b)}
\put(90,140){
\includegraphics[scale = .4, clip = true, draft = false]{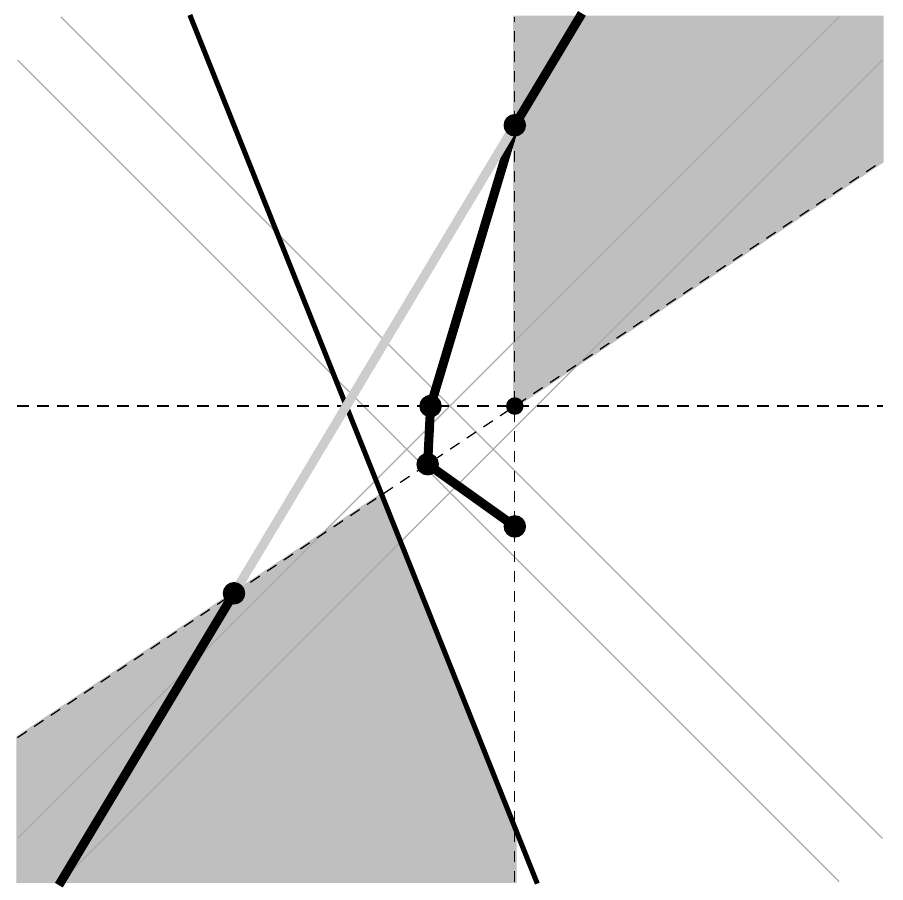}
}
\put(146,130){(c)}
\put(210,140){
\includegraphics[scale = .4, clip = true, draft = false]{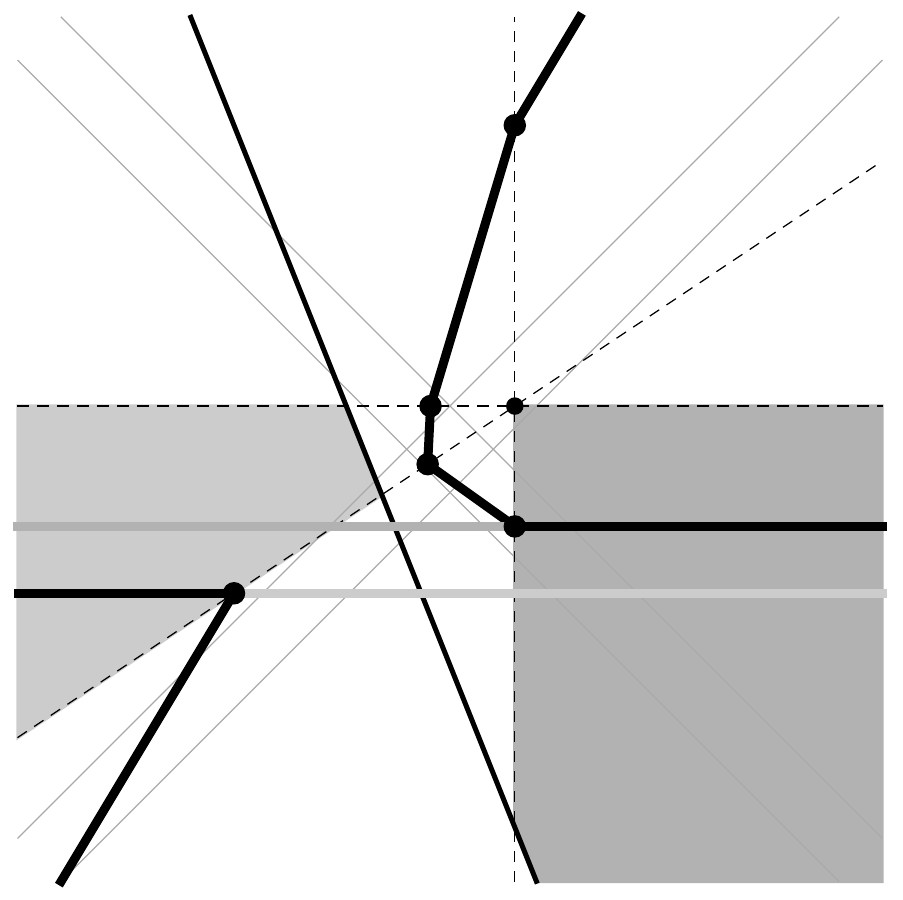}
}
\put(266,130){(d)}
\put(90,10){
\includegraphics[scale = .4, clip = true, draft = false]{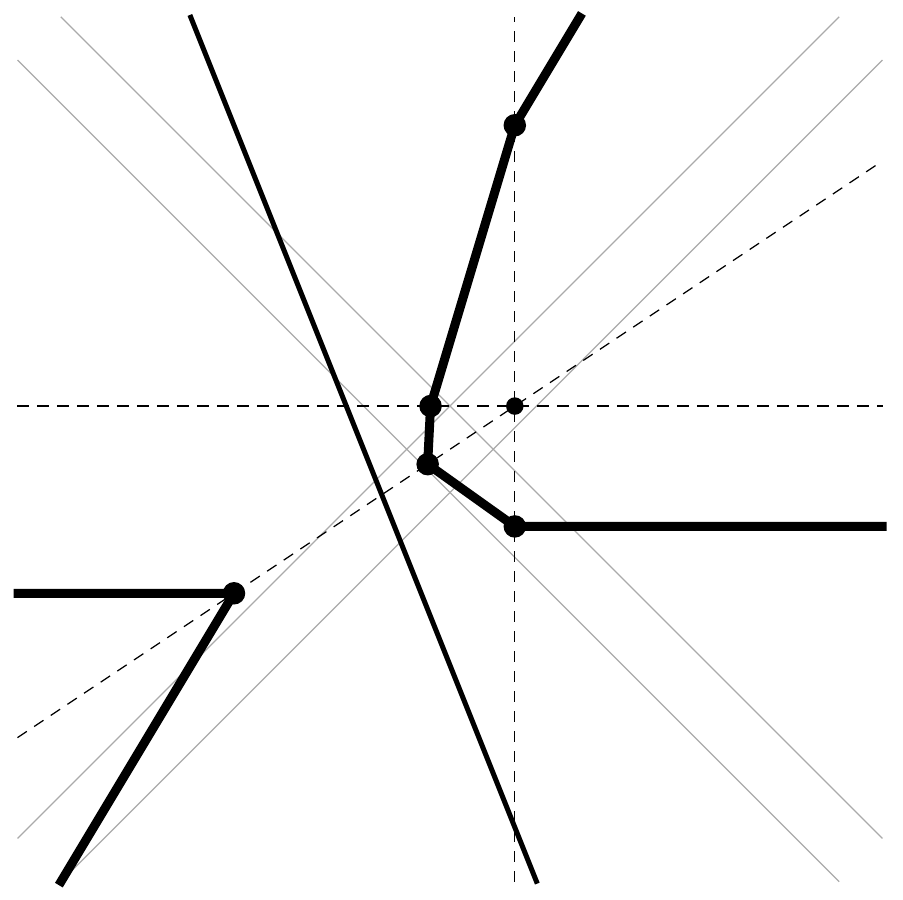}
}
\put(146,0){(e)}
\end{picture}
\caption{Connecting the dots to produce a conic section.  In this example, $A = \left(\frac{1}{2}, \frac{1}{5}, 1\right)$, $a = \left(\frac{3}{2}, 1, 1 \right)$, and $\kappa = 2$.  Starting with the vertices (a), connect adjacent vertices using their associated segments (b), then add complementing rays associated to anti-adjacent vertices (c), and finally, if there are vertices at infinity, add rays associated to vertices and reference lines (d) to produce the complete conic section (e).} \label{connectthedotsfig}
\end{figure}

\begin{proof}

Let $W$ be an active double wedge.  In this wedge, there is a partial distance $d_{j,k}(x, \ell)$ being used to determine $d(x, \ell)$.  The double-wedge is the union of wedges $\hat{W}$ and $\check{W}$.  In $\hat{W}$ the absolute values that make up $d_{j,k}(x, \ell)$ resolve in a particular way and in $\check{W}$, these absolute values resolve in the opposite way.  At the same time, $d(x, P)$ resolves in one way on one side of $P^S$ and the opposite way on the other side.  The resulting linear equation valid in $\hat{W}$ and on one side of $P^S$ is also valid in $\check{W}$ and on the other side of $P^S$.  Let $\widetilde{W}$ be this region.  The shaded region in Figure~\ref{adjacencyfig}(b) is an example of such a region.  Any point that satisfies the resulting linear equation and that lies in $\widetilde{W}$ is part of the conic section.

Let $v^{i s_i}$ and $v^{j s_j}$ be adjacent or anti-adjacent vertices on the boundary of $\widetilde{W}$, with neither at infinity.  These points are already known to solve the desired linear equation, so $\gamma^{i s_i j s_j}$ is the set of all points solving the linear equation.  If $v^{i s_i}$ and $v^{j s_j}$ are adjacent, then the segment associated to $v^{i s_i}$ and $v^{j s_j}$ lies in $\widetilde{W}$.  Similarly, if $v^{i s_i}$ and $v^{j s_j}$ are anti-adjacent then the complementing rays lie in $\widetilde{W}$ with one ray in $\hat{W}$ and the other ray in $\check{W}$.

Finally, if $v^{i s_i}$ and $v^{j s_j}$ are adjacent and $v^{j s_j}$ lies at infinity, then $\gamma^{i s_i j s_j}$ must intersect $\widetilde{W}$ through $v^{i s_i}$ but cannot cross $\rho^j$ since, if it did, the resulting intersection $v^{j s_j}$ would not be at infinity.  Therefore, $\gamma^{i s_i j s_j}$ is parallel to $\rho^j$ and the intersection of $\gamma^{i s_i j s_j}$ and $\widetilde{W}$ is the ray associated to $v^{i s_i}$ and $\rho^j$.
\end{proof}

\subsection{Auxiliary points on $P^S$}
The vertices found above enjoy a geometric relationship with $P^S$.  Given two reference lines $\rho^i$ and $\rho^j$, select one vertex on $\rho^i$ and one on $\rho^j$.  This defines a line.  The complementary pair of points also forms a line.  It turns out that these lines intersect at a point on $P^S$ that we call an auxiliary point.  This property was observed for the special case of $C(\ell_{0,0,1}, P_{0,0,1},\kappa)$ in \cite{Laatsch}.

\begin{figure}
\begin{picture}(300,130)
\put(-10,2){
\includegraphics[scale = .5, clip = true, draft = false]{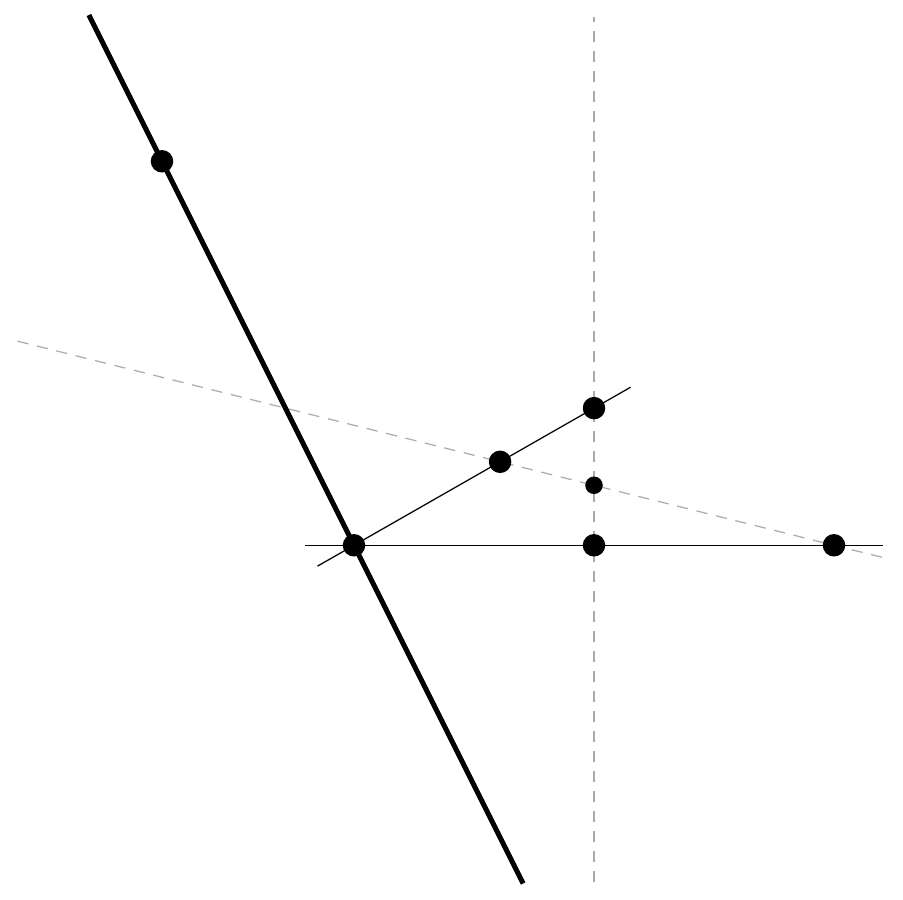}
}
\put(160,0){
\includegraphics[scale = .5, clip = true, draft = false]{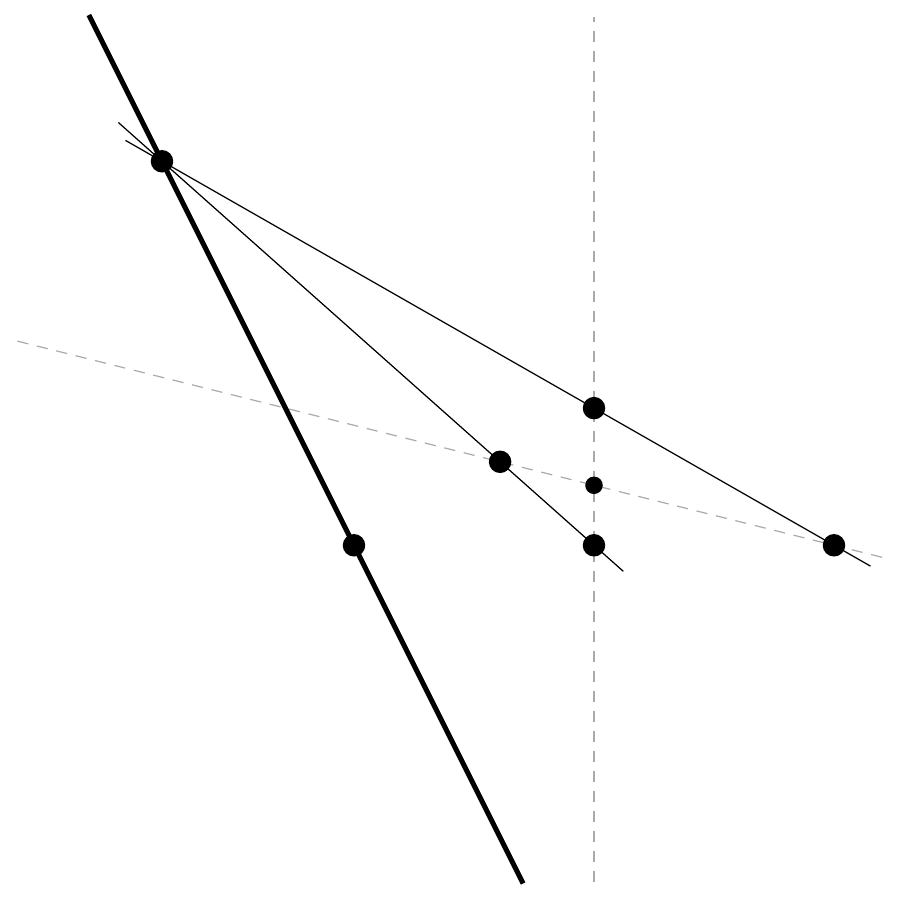}
}
\put(26,41){\scriptsize $\omega^{2,3+}$}
\put(53,65){\scriptsize $v^{3-}$}
\put(110,42){\scriptsize $v^{3+}$}
\put(72,42){\scriptsize $v^{2-}$}
\put(72,77){\scriptsize $v^{2+}$}
\put(70,0){\scriptsize $P^S$}
\put(82, 62){\scriptsize $a$}
\put(165,95){\scriptsize $\omega^{2,3-}$}
\put(224,54){\scriptsize $v^{3-}$}
\put(280,39){\scriptsize $v^{3+}$}
\put(242,39){\scriptsize $v^{2-}$}
\put(247,74){\scriptsize $v^{2+}$}
\put(240,0){\scriptsize $P^S$}
\put(252, 60){\scriptsize $a$}
\end{picture}
\caption{Auxiliary points.  In both images, $A = \left(\frac{1}{2}, \frac{1}{4}, 1 \right)$, $a = \left(2, -\frac{1}{2}, 1 \right)$, and $\kappa = \frac{1}{2}$.  On the left, $\gamma^{2+3-}$ and $\gamma^{2-3+}$ meet at $\omega^{2,3+}$.  On the right,$\gamma^{2+3+}$ and $\gamma^{2-3-}$ meet at $\omega^{2,3-}$.} \label{auxpointfig}
\end{figure}

\begin{thm} \label{auxpointthm}
Given the conic section $C \cap S$ for cone $C = C(\ell, P, \kappa)$ where $\ell$ is non-horizontal, for $i \neq j$
\begin{itemize}
\item the lines $\gamma^{i+j+}$ and $\gamma^{i-j-}$ meet at a point on $P^S$;
\item the lines $\gamma^{i+j-}$ and $\gamma^{i-j+}$ meet at a point on $P^S$.
\end{itemize}
The auxiliary points corresponding to the vertices $v^{1 \pm}$ and $v^{2 \pm}$ are
\[
w^{1,2 \pm} = \left(\frac{\pm A_2 a_1 + A_2 a_2 + \delta}{-A_1 \pm A_2},
				\frac{A_1 a_1 \pm A_1 a_2 + \delta}{\pm A_1 - A_2}, 1\right).
\]
The auxiliary points corresponding to the vertices $v^{1 \pm}$ and $v^{3 \pm}$ are
\[
w^{1,3 \pm} = \left(\frac{-(\delta a_1 \pm A_2 a_2 \pm \delta)}{A_1 a_1 + A_2 a_2 \pm A_1},
				\frac{\pm A_1 a_2 - \delta a_2}{A_1 a_1 + A_2 a_2 \pm A_1}, 1 \right).
\]
The auxiliary points corresponding to the vertices $v^{2 \pm}$ and $v^{3 \pm}$ are
\[
w^{2,3 \pm} = \left(\frac{\pm A_2 a_1 - \delta a_1}{A_1 a_1 + A_2 a_2 \pm A_2},
				\frac{-(\pm A_1 a_1 + \delta a_2 \pm \delta)}{A_1 a_1 + A_2 a_2 \pm A_2}, 1 \right).
\]
In each of the above, the sign choice in the superscript corresponds to the choice in the formula for that point.
\end{thm}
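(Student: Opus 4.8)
The plan is to treat the claim as a problem in plane analytic geometry, feeding in the explicit vertex coordinates from Theorem~\ref{nonhorizontalverticesthm}. Recall that $P^S = \{x \in S : A_1 x_1 + A_2 x_2 + \delta = 0\}$, write $N = A_1 a_1 + A_2 a_2 + \delta$ (nonzero, since the cone is non-degenerate), and abbreviate $\mu = \frac{M}{\kappa}$. Working in the shifted coordinates $u = x_1 - a_1$, $w = x_2 - a_2$ centered at $a$, the deviation formulas \eqref{v1altformulaeq}, \eqref{v2altformulaeq}, \eqref{v3altformulaeq} place the relevant vertices at simple positions, so each line $\gamma^{i s_i j s_j}$ acquires a linear equation whose coefficients depend on the sign choices only through $\pm\mu$. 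The whole argument then rests on one structural observation: when the two complementary lines named in a given bullet are \emph{added}, the $\mu$-dependent terms cancel and what survives is exactly a nonzero multiple of the defining equation of $P^S$. Thus any common solution of the two lines satisfies the equation of $P^S$.

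For the pair $(i,j)=(1,2)$ this is transparent. Since $v^{1 s_1} = \left(\frac{N}{s_1\mu - A_1}, 0\right)$ and $v^{2 s_2} = \left(0, \frac{N}{s_2\mu - A_2}\right)$ in $(u,w)$ coordinates, the intercept form of $\gamma^{1 s_1 2 s_2}$ is
\[
(s_1\mu - A_1)\,u + (s_2\mu - A_2)\,w = N.
\]
Adding the equations for $(s_1,s_2)=(+,+)$ and $(-,-)$ (and likewise $(+,-)$ with $(-,+)$) cancels $\mu$ and gives $A_1 u + A_2 w = -N$, i.e.\ $A_1 x_1 + A_2 x_2 + \delta = 0$. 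Subtracting the two equations produces a second linear relation ($u=-w$ for the first pairing, $u=w$ for the second), and solving the resulting $2\times 2$ system yields the stated coordinates for $w^{1,2\pm}$.

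The pairs $(1,3)$ and $(2,3)$ run the same way, but the bookkeeping is heavier because $\rho^3$ is oblique, with direction $(a_1,a_2)$ rather than a coordinate axis. Here $v^{3 s_3} = \frac{N}{s_3\mu - B}(a_1,a_2)$ in $(u,w)$ coordinates, with $B = A_1 a_1 + A_2 a_2$. Writing the line through $v^{2 s_2}$ and $v^{3 s_3}$ in two-point form, dividing out the nonzero offsets, and simplifying with the identity $B - A_2 a_2 = A_1 a_1$, one finds
\[
\left[A_1 a_1 + (s_2 a_2 - s_3)\mu\right] u + a_1(A_2 - s_2\mu)\,w + N a_1 = 0.
\]
Adding the two complementary equations again annihilates every $\mu$ term and leaves $a_1(A_1 u + A_2 w + N) = 0$; since $a_1 \neq 0$ in the generic configuration, the intersection lies on $P^S$. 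The case $(1,3)$ follows by interchanging the two horizontal coordinates, producing the factor $a_2$ in place of $a_1$, and the explicit formulas for $w^{1,3\pm}$ and $w^{2,3\pm}$ drop out of the corresponding $2\times 2$ systems.

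The main obstacle is exactly this oblique case: deriving the clean linear form above for lines meeting $\rho^3$, and then carrying out the (routine but lengthy) simplifications needed to match the stated coordinate formulas. Two minor points remain to be dispatched. First, when the two complementary lines happen to be parallel, their finite intersection does not exist and the auxiliary point lies at infinity, consistent with the paper's convention for points at infinity; the cancellation identity still certifies that the common direction points along $P^S$. Second, the degenerate alignments, such as $a_1 = 0$ (which forces $\rho^2$ and $\rho^3$ to coincide), must be excluded or verified separately.
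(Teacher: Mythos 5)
Your proposal is correct, and it reaches the right key identity, but it travels a more computational road than the paper. The paper's proof never derives the line equations from the vertex coordinates at all: it observes that each edge line of the conic section \emph{is} a resolution of the defining equation $d_{i,j}(x,\ell)=\kappa\,d(x,P)$, and that the two complementary lines in a given pairing are $\alpha=\beta$ and $-\alpha=\beta$ (flip the resolution on one side of the equation, keep it on the other), so their intersection forces $\alpha=\beta=0$ and $\beta=0$ is precisely the equation of $P^S$; the explicit formulas then come from solving $\alpha=0=\beta$. Your argument recovers exactly this cancellation --- adding your two intercept-form equations $(s_1\mu-A_1)u+(s_2\mu-A_2)w=N$ kills the $\mu$ terms and leaves the $P^S$ equation, which is the coordinate shadow of ``$\alpha=\beta$ plus $-\alpha=\beta$ gives $\beta=0$'' --- but you must first reconstruct each line from its two vertices via two-point form, which is where all of your ``heavier bookkeeping'' in the oblique $\rho^3$ cases comes from. (I checked your line equation $\left[A_1a_1+(s_2a_2-s_3)\mu\right]u+a_1(A_2-s_2\mu)w+Na_1=0$ against both vertices; it is correct.) What your route buys is independence from the claim that the vertex sign choices match up with resolutions of the absolute values in the asserted way, since you verify everything from the coordinates; what the paper's route buys is that the auxiliary points' independence of $\kappa$ and their geometric meaning ($\alpha=0$ as well as $\beta=0$) fall out for free, and the degenerate alignments you flag (parallel complementary lines, $a_1=0$) never need special treatment because the equations $\alpha=0=\beta$ make sense regardless. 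Your handling of the parallel case and the explicit solve of the $2\times 2$ systems are consistent with the stated formulas, so I see no gap, only deferred routine algebra.
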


See Figure~\ref{auxpointfig} for an illustration of this result.

\begin{proof}
This can be checked by a direct but tedious computation since we know the coordinates for $v^{i\pm}$ and could write out the formulas for the lines $\gamma^{i s_i j s_j}$, but the following proof is more illuminating.  The vertices are on the lines defined by resolving the absolute values in
\[
d_{i,j}(x, \ell) = \kappa\, d(x, P).
\]  
For a given pair of lines, the potential vertices defining one of the lines are determined by a particular choice of resolving the absolute values, and the potential vertices defining the other line are determined by flipping the choices on one side of the equation, while keeping the choice on the other.

The resulting equations are for the lines in a given pair.  Next, note that these two equations can be written in the form $\alpha = \beta$ and $-\alpha = \beta$ where $\alpha$ is a resolution of $d_{i, j}(x, \ell)$ and $\beta$ is the resolution of $d(x, P)$.  Note that the solution occurs if and only if $\alpha = \beta = 0$.  But $\beta = 0$ is equivalent to $A_1 x_1 + A_2 x_2 + \delta = 0$, which is the equation for the line $P^S$.

For each choice of reference lines, there are two ways to distribute the vertices, resulting in two different auxiliary points, and there are three pairs of reference lines, so there are a total of six auxiliary points on $P^S$, the formulas for which are found by explicitly solving the resulting system of equations.
\end{proof}

From the formulas, we can see that the auxiliary points do not depend on $\kappa$.  Before knowing the formulas, this follows from the fact that $\kappa$ disappears when we modify the linear system being solved.

Similarly to the reference lines, the auxiliary points are not always active in the sense that they lie on extensions of segments or rays in the conic section.  If $\ell_a$ is intermediate, the only active auxiliary points are those that arise from adjacent or anti-adjacent vertices.  If $\ell_a$ is not intermediate, then only the vertices on active reference lines produce active auxiliary points.  As such, only three auxiliary points are active if $\ell_a$ is intermediate, and only two auxiliary points are active otherwise.

With the help of the auxiliary points, an alternative method for constructing a conic section can be formulated.  Note that auxiliary points can lie at infinity, but if all active auxiliary points are finite, we have the following:

\begin{thm} \label{auxsectionconstructionthm}
Given a cone $C = C(\ell, P, \kappa)$, such that all active auxiliary points are finite.  Consider the rays based at each active auxiliary point and containing the associated vertices.  On each ray, consider the set of points between two vertices or, if there is only one vertex, the set of points on the side of the vertex that does not include the auxiliary point.  The conic section $C \cap S$ consists of the union over the rays of these points.
\end{thm}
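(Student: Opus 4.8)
The plan is to derive this theorem as a reorganization of the decomposition already established in Theorem~\ref{sectionconstructionthm}, using the fact from Theorem~\ref{auxpointthm} that every line $\gamma^{i s_i j s_j}$ defining a piece of the conic section passes through an active auxiliary point lying on $P^S$. The central observation I would exploit is that, because an auxiliary point $w$ lies on $P^S$, its position relative to the two vertices it connects is dictated entirely by whether those vertices lie on the same side or on opposite sides of $P^S$---which is precisely the adjacent/anti-adjacent dichotomy. Since the hypothesis guarantees all active auxiliary points are finite, and the line $\gamma^{i s_i j s_j}$ is not parallel to $P^S$, it meets $P^S$ in the single point $w$.

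First I would treat a pair of adjacent vertices $v^{i s_i}$ and $v^{j s_j}$. By definition these lie on the same side of $P^S$, and by Theorem~\ref{auxpointthm} the line $\gamma^{i s_i j s_j}$ through them meets its complementary line on $P^S$ at an auxiliary point $w$. Since $w \in P^S$ and both vertices lie on the same side of $P^S$, the point $w$ is not between them, so both vertices lie on a single ray based at $w$. The ``points between two vertices'' on that ray form exactly the segment associated to $v^{i s_i}$ and $v^{j s_j}$, recovering the segment prescribed by Theorem~\ref{sectionconstructionthm}.

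Next I would treat a pair of anti-adjacent vertices, which by definition lie on opposite sides of $P^S$. Now $w$ lies between them on $\gamma^{i s_i j s_j}$, so each vertex determines a distinct ray based at $w$ containing exactly one of the two vertices. On each such ray the construction selects the points on the side of the vertex away from $w$; since $w$ lies on $P^S$ and the line meets $P^S$ only at $w$, these are precisely the two complementary rays associated to the anti-adjacent pair. The vertex-at-infinity case is similar: when $v^{j s_j}$ lies at infinity, the line $\gamma^{i s_i j s_j}$ is parallel to $\rho^j$, still passes through the finite auxiliary point $w$, and the single-vertex ray away from $w$ coincides with the ray associated to $v^{i s_i}$ and $\rho^j$ that does not cross $P^S$.

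The hard part will be verifying that this correspondence is a bijection---that the rays emanating from the active auxiliary points reproduce every piece of the conic section exactly once and introduce nothing spurious. This requires matching the active auxiliary points to the adjacent and anti-adjacent vertex pairs catalogued after Theorem~\ref{auxpointthm} (three in the intermediate case, two otherwise) and confirming that any ray from an active auxiliary point containing no vertex, or corresponding to an inactive pairing, contributes no points. Once this accounting is complete, the union over all rays from the active auxiliary points of the selected point sets agrees piece-by-piece with the decomposition of Theorem~\ref{sectionconstructionthm}, giving the result.
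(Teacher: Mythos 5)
For the case where $\ell$ is non-horizontal, your argument is correct and is essentially the paper's own: the paper's proof is a one-sentence reduction to Theorem~\ref{sectionconstructionthm}, observing that the rays based at active auxiliary points are parts of the lines $\gamma^{i s_i j s_j}$ and that the point sets described are just alternate characterizations of the associated segments and complementary rays. Your elaboration of why this works---an auxiliary point lies on $P^S$, so it separates an anti-adjacent pair (opposite sides of $P^S$) and fails to separate an adjacent pair (same side), which converts ``segment between adjacent vertices'' into ``points between two vertices on one ray from $w$'' and ``complementary rays of an anti-adjacent pair'' into ``points beyond the single vertex on each of two rays from $w$''---is a correct and welcome filling-in of detail the paper leaves implicit, and your handling of a vertex at infinity matches the paper's.

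The gap is that the theorem is stated for an arbitrary cone, and the paper's stated purpose for it is precisely the case where $\ell$ \emph{is} horizontal, where your entire apparatus evaporates: there is no point $a \in S$, the reference lines $\rho^1$ and $\rho^2$ do not exist, there are only the two vertices $v^{3\pm}$, there is no adjacency or anti-adjacency structure, Theorem~\ref{sectionconstructionthm} does not apply, and the auxiliary points are not defined as intersections of lines through vertex pairs but arise from a parallelogram obtained by resolving the two absolute values in $d_{j,3}(x,\ell) = \kappa\, d(x,P)$. The paper proves this case separately (in Section~\ref{conicsectionswhenellishorizontalsection}) by a different argument: each edge line of that parallelogram carries exactly one vertex $v$ and one auxiliary point $\omega$, which subdivide it into a segment and two rays; since $\omega \in P^S$ cannot belong to the section, the segment and the ray terminating at $\omega$ are excluded, leaving exactly the ray based at $v$ on the side away from $\omega$. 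Your proposal as written would need this additional case to establish the theorem as stated.
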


See Figure~\ref{connectthedotsauxfig} for an illustration of this.  While this theorem does not provide a particularly useful way to construct conic sections when $\ell$ is non-horizontal, it will serve as an effective construction method for conic sections resulting from horizontal defining lines.  We prove the theorem here in the case where $\ell$ is non-horizontal, and save the proof when $\ell$ is horizontal for the next section.

\begin{proof}[Proof when $\ell$ is non-horizontal] Since the active auxiliary points are identified by those lines that have adjacent or anti-adjacent vertices lying on them, the rays described here are parts of these lines and resulting sets of points are just alternate characterizations of the associated segments and rays identified in Theorem~\ref{sectionconstructionthm}.
\end{proof}

If there are active auxiliary points at infinity, the construction described by Theorem~\ref{auxsectionconstructionthm} can be modified.  An auxiliary point lies at infinity precisely when the lines defining it are parallel to $P^S$. In this case, the resulting part of the conic section is always a segment associated to the two active vertices defining a line producing the auxliary point at infinity.

\begin{figure}
\begin{picture}(300,500)
\put(90,400){
\includegraphics[scale = .4, clip = true, draft = false]{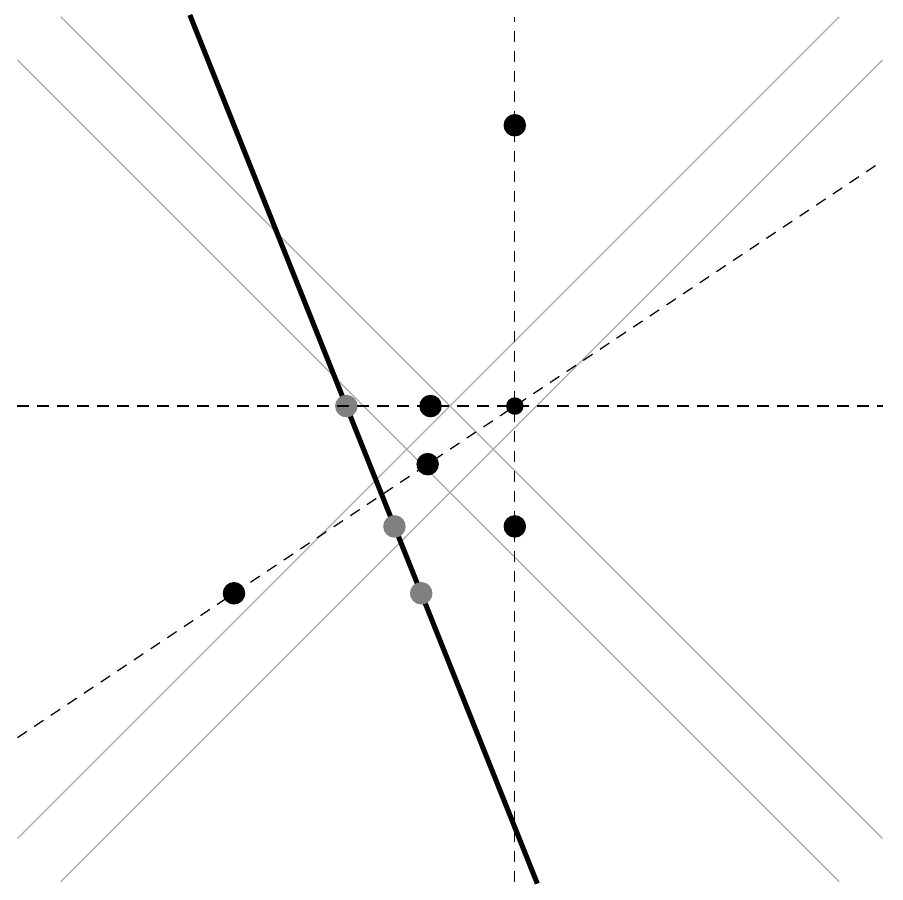}
}
\put(146,390){(a)}
\put(-30,270){
\includegraphics[scale = .4, clip = true, draft = false]{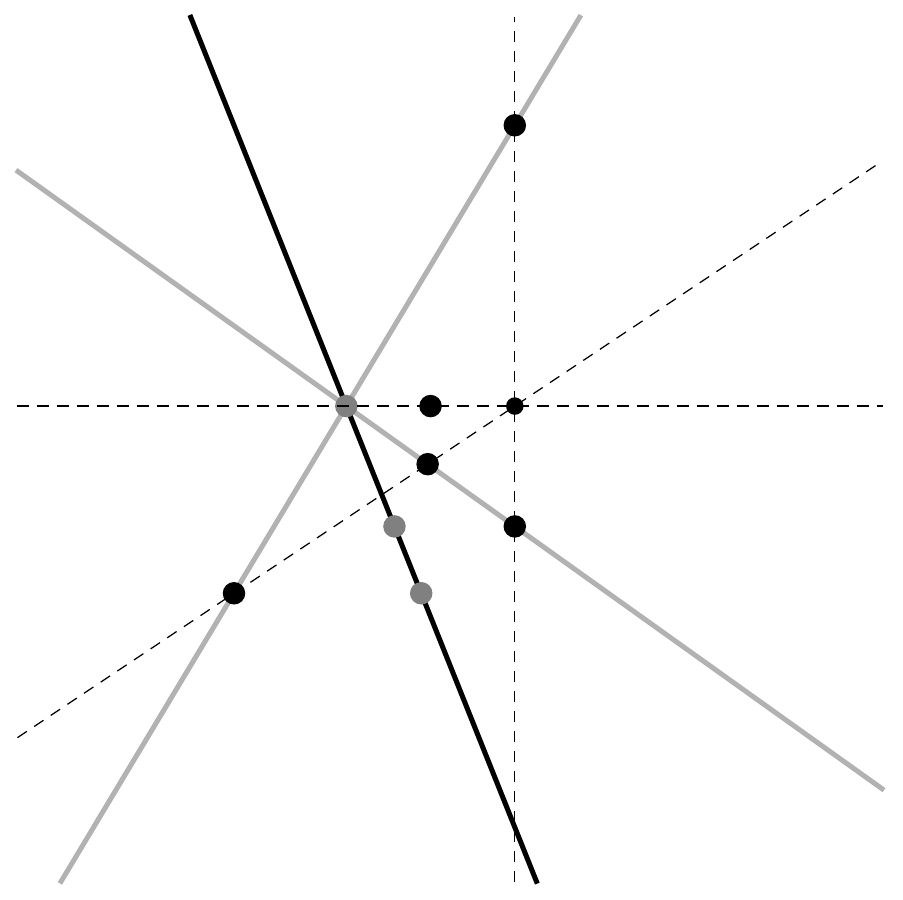}
}
\put(34,265){\vector(0,-1){15}}
\put(90,270){
\includegraphics[scale = .4, clip = true, draft = false]{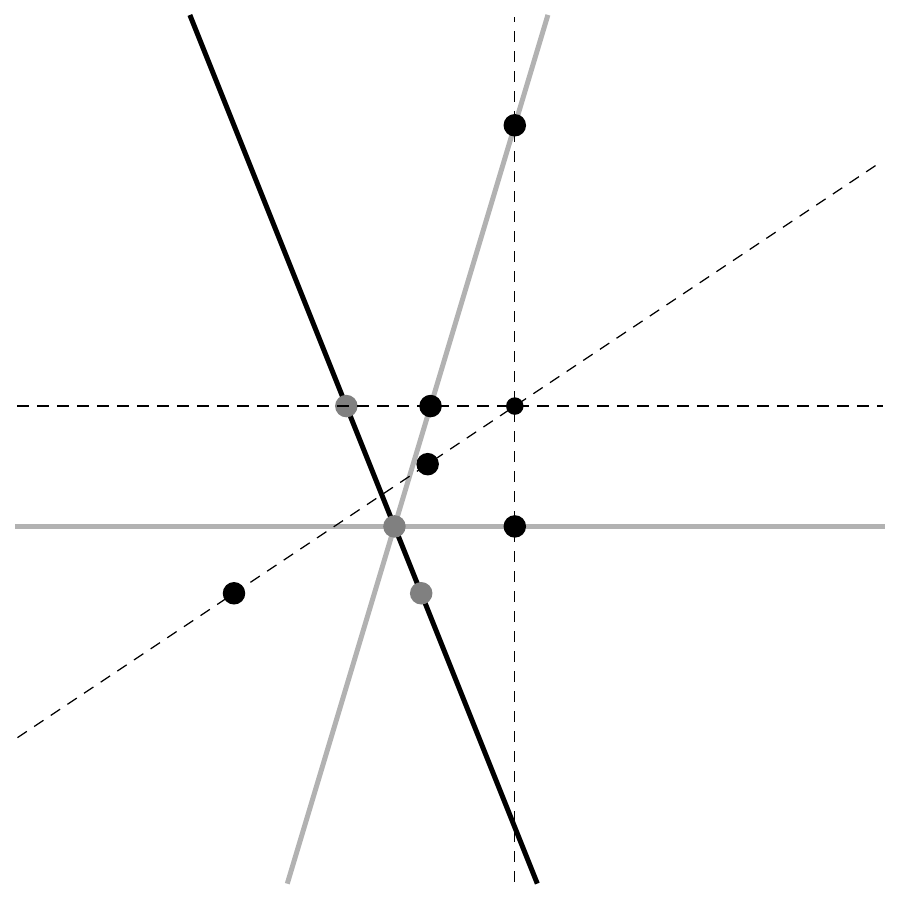}
}
\put(154,265){\vector(0,-1){15}}
\put(210,270){
\includegraphics[scale = .4, clip = true, draft = false]{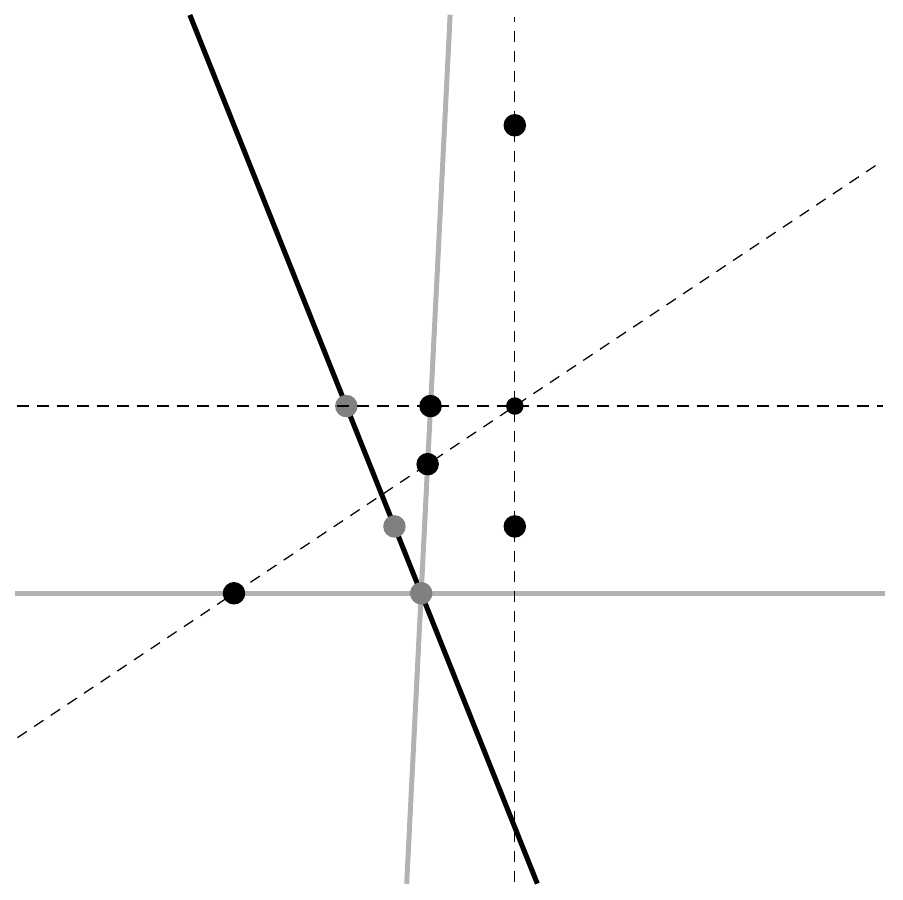}
}
\put(274,265){\vector(0,-1){15}}
\put(-30,140){
\includegraphics[scale = .4, clip = true, draft = false]{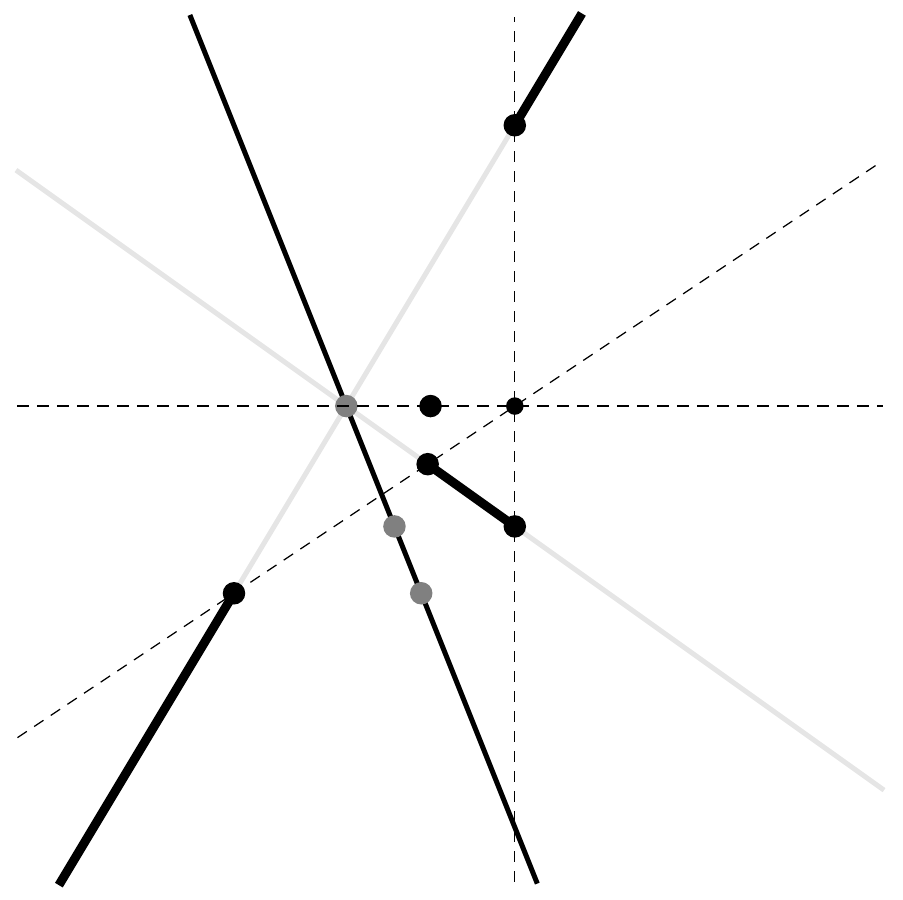}
}
\put(26,130){(b)}
\put(90,140){
\includegraphics[scale = .4, clip = true, draft = false]{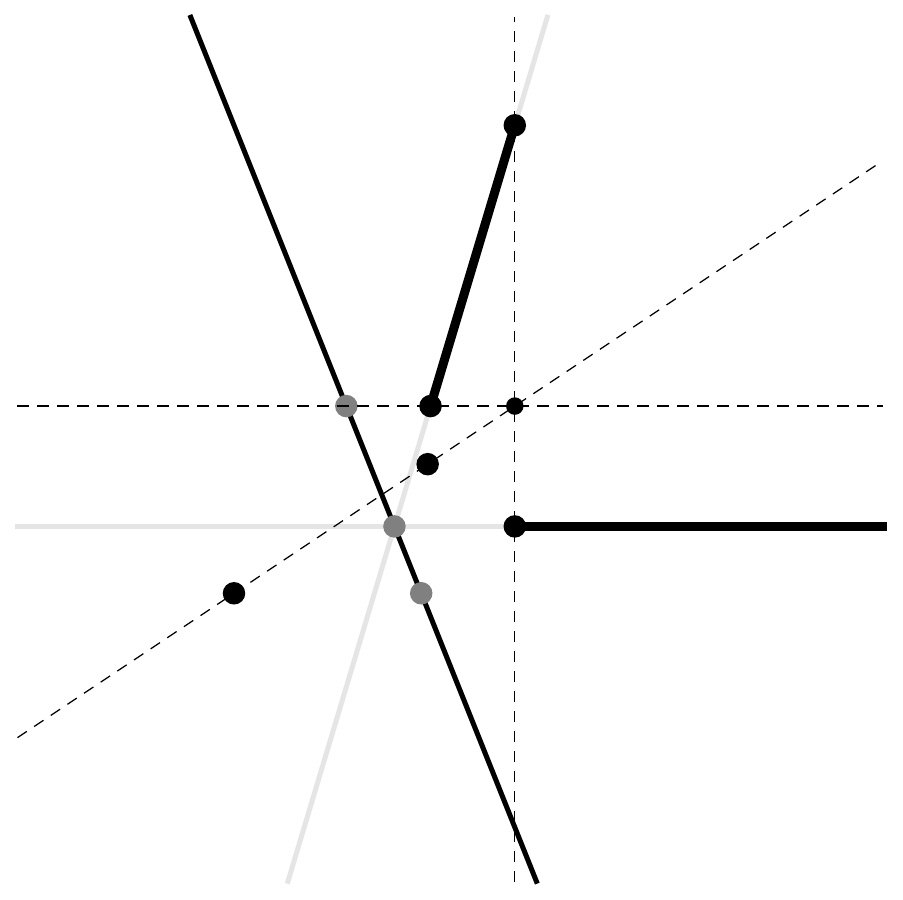}
}
\put(146,130){(c)}
\put(210,140){
\includegraphics[scale = .4, clip = true, draft = false]{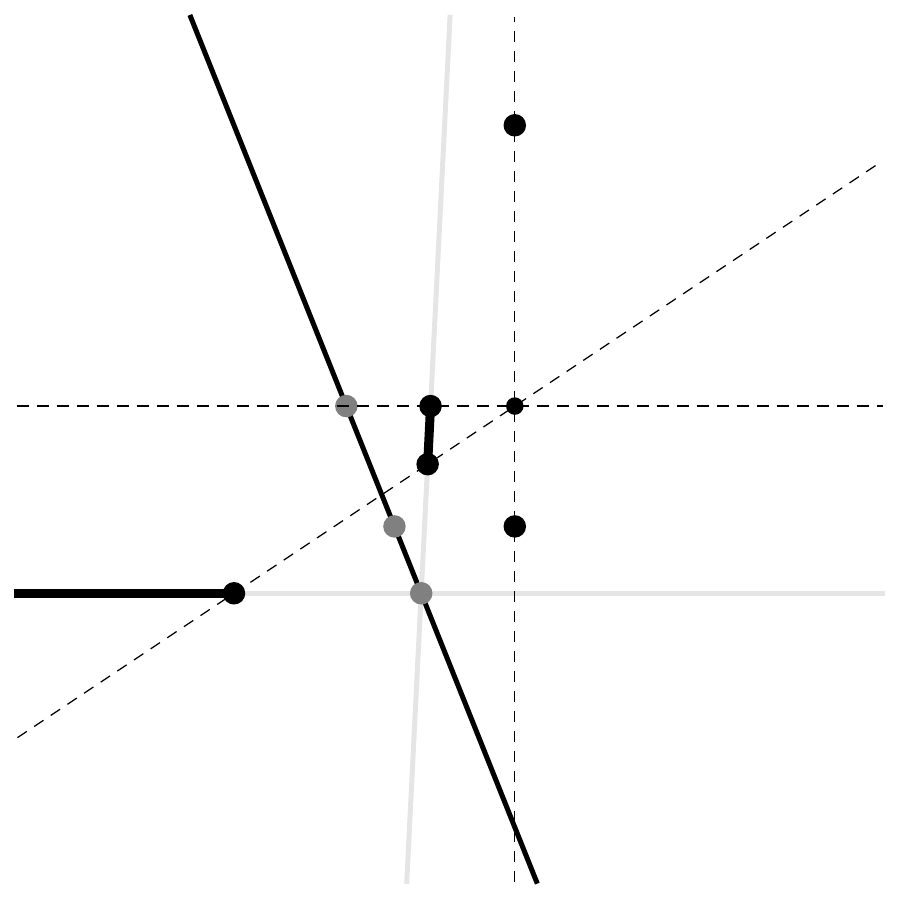}
}
\put(266,130){(d)}
\put(90,10){
\includegraphics[scale = .4, clip = true, draft = false]{connect-the-dots-final.pdf}
}
\put(146,0){(e)}
\end{picture}
\caption{Using the rays through auxiliary points to produce a conic section.  In this example, just like in Figure~\ref{connectthedotsfig}, $A = \left(\frac{1}{2}, \frac{1}{5}, 1\right)$, $a = \left(\frac{3}{2}, 1, 1 \right)$, and $\kappa = 2$.  Starting with the vertices and active auxiliary points (a), for the rays associated to each auxiliary point, identify the points between two vertices or beyond a single vertex on each ray (b), (c), (d) to produce the complete conic section (e).} \label{connectthedotsauxfig}
\end{figure}

\subsection{Characteristics of sections} \label{sectioncarsec}

Theorems~\ref{sectionconstructionthm} and \ref{auxsectionconstructionthm} provide methods for constructing conic sections, but do not indicate what kinds of shapes might arise.  Here, we establish some general facts about how the vertices must be located, including ways to identify when the vertices will lie on the same side of $P^S$ as $a$, at infinity, or on the opposite side of $P^S$.  These characteristics combined with Theorems~\ref{sectionconstructionthm} and \ref{auxsectionconstructionthm} allow us to sketch conic sections without detailed calculations and allow us to distinguish realistic sketches from non-realistic ones.

As in the Euclidean setting, we say a conic section is an ellipse if it is a bounded set, a parabola if it is unbounded with one component, and a hyperbola if is unbounded with two components.  Also as in the Euclidean setting, ellipses arise when the slicing plane completely slices one half of the cone, parabolas arise when the slicing plane slices one half of the cone, but is parallel to at least one line in the cone, and hyperbolas arise when the slicing plane slices both halves of the cone.  Since our slicing plane is always parallel to the $(x_1,x_2)$-plane, for a parabola, at least one edge of the cone lies in the $(x_1,x_2)$-plane and each half of the cone lies on one side of this plane.  For hyperbola both halves of the cone include points that lie above and below this plane.

\begin{lemma} \label{vertexpositionlemma}
Given the conic section $C \cap S$ for a cone $C = C(\ell_a, P_A, \kappa)$, on each active reference line $\rho^i$,
\begin{itemize}
\item exactly one of the vertices $v^{i \pm}$ lies on the segment of $\rho^i$ between $P^S$ and $a$;
\item if neither vertex lies at infinity, then they lie on the opposite sides of $a$ if and only if they lie on the same side of $P^S$.
\end{itemize}
\end{lemma}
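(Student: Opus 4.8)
The plan is to handle the three reference lines simultaneously by using the deviation-from-$a$ forms in Equations~\eqref{v1altformulaeq}, \eqref{v2altformulaeq}, and~\eqref{v3altformulaeq}. Parametrize $\rho^i$ by the scalar $c$ appearing there, so that $a$ is the point $c=0$ and the two vertices are $c = c^\pm := \frac{N}{\pm \mu - \alpha_i}$, where $\mu = M/\kappa > 0$, where $N = A_1 a_1 + A_2 a_2 + \delta$ is nonzero because $a \notin P^S$ for a non-degenerate cone, and where $\alpha_i$ denotes $A_1$, $A_2$, or $A_1 a_1 + A_2 a_2$ according as $i = 1, 2, 3$. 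Using $d(x,\ell) = |x_1 - a_1|$ on $\rho^1$, $d(x,\ell) = |x_2 - a_2|$ on $\rho^2$, and $d(x,\ell) = |1 - x_1/a_1| = |1 - x_2/a_2|$ on $\rho^3$ (so the split $|a_1| \gtrless |a_2|$ evaporates), a direct substitution gives, along $\rho^i$,
\[
d(x,\ell) = |c|, \qquad d(x,P) = \frac{|N + \alpha_i\, c|}{M}.
\]
Thus the section meets $\rho^i$ precisely at the roots of $F(c) := \mu|c| - |N + \alpha_i c|$, namely $c^\pm$, and $P^S$ meets $\rho^i$ at $c_P := -N/\alpha_i$ when $\alpha_i \neq 0$.

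For the first bullet I would invoke the intermediate value theorem on $F$. Since $N \neq 0$ we have $F(0) = -|N| < 0$, while $F(c_P) = \mu|c_P| > 0$ because the plane-distance term vanishes at $c_P$. The corners of the two absolute values occur only at $c = 0$ and $c = c_P$, so $F$ is affine on the open interval strictly between them; an affine function that changes sign across an interval has exactly one, interior, root there. Hence exactly one vertex lies strictly between $a$ and $P^S$, and neither endpoint is itself a vertex. This argument is indifferent to whether the second vertex is finite or at infinity, so it also covers the case $|\alpha_i| = \mu$.

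For the second bullet I would compute two products. ``Opposite sides of $a$'' means $c^+ c^- < 0$, and
\[
c^+ c^- = \frac{N^2}{\alpha_i^2 - \mu^2};
\]
``same side of $P^S$'' means $(c^+ - c_P)(c^- - c_P) > 0$, and
\[
(c^+ - c_P)(c^- - c_P) = \frac{N^2 \mu^2}{\alpha_i^2(\mu^2 - \alpha_i^2)}.
\]
The sign of each is governed entirely by $\mu^2 - \alpha_i^2$: the first is negative and the second positive exactly when $\mu > |\alpha_i|$. Hence the two conditions are equivalent, each expressing that the reference-line ``slope'' $|\alpha_i|$ is shallower than $\mu = M/\kappa$. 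Geometrically this says the upward V-graphs $y = \mu|c|$ and $y = |N + \alpha_i c|$, with corners at $a$ and at $P^S$, cross in two points that straddle $a$ precisely when the first V is the steeper one.

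The one genuine obstacle is the parallel configuration $\alpha_i = 0$, where $\rho^i$ never meets $P^S$ and there is no crossing point $c_P$ for the intermediate value argument to exploit. In that case the vertices are $c^\pm = \pm N/\mu$, symmetric about $a$; they lie on opposite sides of $a$ and, trivially since $P^S$ does not separate $\rho^i$, on the same side of $P^S$, so the second bullet holds while the first is vacuous. All remaining cases fall under the two main arguments once $\alpha_i \neq 0$. Collapsing the three reference lines into the single scalar $\alpha_i$ is exactly what reduces an otherwise three-fold case analysis, with a further split on $\rho^3$, to one computation, so the main effort here is organizational rather than technical.
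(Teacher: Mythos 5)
Your proof is correct. For the first bullet you and the paper run essentially the same intermediate value argument: the difference of the two distance functions restricted to $\rho^i$ (your $F$) is negative at $a$, positive at $P^S \cap \rho^i$, and piecewise affine with corners only at those two points, so it has exactly one root strictly between them. Where you genuinely diverge is the second bullet. The paper deduces it from the first with no computation at all: $a$ and $P^S \cap \rho^i$ cut $\rho^i$ into a segment and two rays, one vertex sits on the segment, and the ray on the opposite side of $a$ from that vertex is precisely the ray lying on the same side of $P^S$. Your route instead evaluates the two sign products $c^+ c^- = N^2/(\alpha_i^2 - \mu^2)$ and $(c^+ - c_P)(c^- - c_P) = N^2\mu^2/\bigl(\alpha_i^2(\mu^2 - \alpha_i^2)\bigr)$ and observes that both are governed by the sign of $\mu^2 - \alpha_i^2$. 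That is more computation than the lemma strictly needs, but it buys something the paper defers to Lemma~\ref{striplemma}: the explicit criterion $|\alpha_i| < M/\kappa$, which is exactly the condition that $e^{i\pm}$ (or $a$, when $i = 3$) lies in the characterizing strip $Q_{A,\kappa}$, so your argument quietly re-derives that later lemma as well. You also explicitly dispose of the parallel case $\alpha_i = 0$, where $P^S$ misses $\rho^i$ entirely and the first bullet is vacuous; the paper's proof passes over this case silently, so flagging it is a small improvement rather than a defect.
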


\begin{proof}
The first statement follows immediately from the Intermediate Value Theorem and the monotonicity of the distance functions $d(x, \ell)$ and $d(x, P)$ restricted to the line $\rho^i$.

The second statement follows from the fact that $P^S$ and $a$ subdivide $\rho^i$ into two rays and a segment.  By the first statement, only one vertex $v$ can lie on the segment, so the other vertex must lie on one of the rays.  The ray on the opposite side of $a$ as $v$ lies on the same side of $P^S$, and vice versa.
\end{proof}

With the help of this lemma we can make some immediate observations:  On one hand, $C \cap S$ is an ellipse if and only if all pairs of active vertices $v^{i \pm}$ are finite and lie on opposite sides of $a$.  In this case, there are no anti-adjacent vertices and so the conic section is just a union of segments.  On the other hand, $C \cap S$ is a hyperbola if an only if for some pair of active vertices, both vertices are finite, lie on the same ray associated to $a$, and, by necessity, $P^S$ separates them.

While this lemma and these observations provide some structure, it is not clear when the conditions are actually met.  In light of these observations, define the set
\[
Q_{A, \kappa} = \left\{x \in S: |A_1 x_1 + A_2 x_2| < \frac{M}{\kappa} \right\}.
\]
We call this set the characterizing strip for the cone $C(\ell_a, P_A, \kappa)$.  Also, let $e^{1 \pm} = (\pm 1, 0, 1)$ and $e^{2 \pm} = (0, \pm 1, 1)$ be the vertices of $\sigma_1(0,0,1) \cap S$.  The following lemma characterizes where these vertices lie relative to $P^S$ and when they lie at infinity.

\begin{lemma} \label{striplemma}
Let $C \cap S$ be the conic section for the cone $C = C(\ell_a, P_A, \kappa)$ where $\ell_a$ is non-horizontal, let $Q = Q_{A, \kappa}$ be the characterizing strip for $C$, and let $i \in \{1, 2\}$.
\begin{itemize}
\item If $e^{i \pm} \in Q$, then the corresponding vertices $v^{i \pm}$ lie on the same side of $P^S$.  If $a \in Q$, then the vertices $v^{3 \pm}$ lie on the same side of $P^S$.
\item If $e^{i \pm} \in \partial Q$ then one of the corresponding vertices $v^{i \pm}$ lies at infinity.  If $a \in \partial Q$, then one of the vertices $v^{3 \pm}$ lies at infinity.
\item If $e^{i \pm} \in S \backslash \overline{Q}$ then the corresponding vertices $v^{i\pm}$ lie on opposite sides of $P^S$.  If $a \in S \backslash \overline{Q}$, then the vertices $v^{3 \pm}$ lie on opposite sides of $P^S$. 
\end{itemize}
\end{lemma}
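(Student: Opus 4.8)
The plan is to classify each vertex by the sign of the affine form $g(x) = A_1 x_1 + A_2 x_2 + \delta$. Since $P^S = \{x \in S : g(x) = 0\}$ and the two open half-planes it bounds are $\{g > 0\}$ and $\{g < 0\}$, two finite vertices lie on the same side of $P^S$ exactly when $g$ shares a sign at both, i.e.\ when the product of the two values of $g$ is positive, and on opposite sides when that product is negative. Because the cone is non-degenerate, $g(a) = A_1 a_1 + A_2 a_2 + \delta \neq 0$, and this quantity will factor out of every computation below.

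First I would evaluate $g$ at the vertices. Writing $c_1 = A_1$, $c_2 = A_2$, and $c_3 = A_1 a_1 + A_2 a_2$, and using the deviation-from-$a$ forms \eqref{v1altformulaeq}--\eqref{v3altformulaeq} together with the affineness of $g$, a short telescoping simplification gives the uniform formula
\[
g(v^{i\pm}) = g(a)\,\frac{\pm \tfrac{M}{\kappa}}{\pm \tfrac{M}{\kappa} - c_i}, \qquad i \in \{1,2,3\}.
\]
(For $i = 3$ one uses $g(a) = c_3 + \delta$ so that the stray $\delta$ recombines cleanly.) This step, though routine, is the computational heart of the argument; everything else is reading off signs.

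Forming the product for each $i$ then yields
\[
g(v^{i+})\,g(v^{i-}) = g(a)^2\,\frac{(M/\kappa)^2}{(M/\kappa)^2 - c_i^2},
\]
whose sign, since $g(a)^2 > 0$ and $M/\kappa > 0$, is precisely the sign of $(M/\kappa)^2 - c_i^2$. It remains to connect $c_i$ to the characterizing strip $Q$. Evaluating the defining quantity $|A_1 x_1 + A_2 x_2|$ of $Q$ gives $|A_1| = |c_1|$ at $e^{1\pm}$, $|A_2| = |c_2|$ at $e^{2\pm}$, and $|A_1 a_1 + A_2 a_2| = |c_3|$ at $a$. Hence lying in $Q$, in $\partial Q$, or in $S \setminus \overline{Q}$ is equivalent, respectively, to $|c_i| < M/\kappa$, $|c_i| = M/\kappa$, or $|c_i| > M/\kappa$, which is exactly the trichotomy governing the sign of $(M/\kappa)^2 - c_i^2$. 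The first and third bullets follow: a positive product forces the two finite vertices onto a common side of $P^S$, a negative product separates them.

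The main obstacle is the middle bullet, where the product identity must be set aside. When $|c_i| = M/\kappa$ exactly one of the denominators $\pm \tfrac{M}{\kappa} - c_i$ vanishes (both cannot, as $M/\kappa > 0$), so exactly one of $v^{i+}, v^{i-}$ is undefined and lies at infinity, as claimed. I would also record at the outset that the phrasings ``$e^{i\pm} \in Q$'' are unambiguous, since the quantity $|A_1 x_1 + A_2 x_2|$ takes the common value $|A_i|$ at both $e^{i+}$ and $e^{i-}$, so these two points always fall into the same region relative to $Q$. With these observations in place, the three bullets are immediate.
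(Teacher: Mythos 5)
Your proof is correct, and it verifies the same underlying mechanism the paper uses --- everything reduces to the trichotomy $|c_i| \lessgtr \frac{M}{\kappa}$ governing the denominators $\pm\frac{M}{\kappa} - c_i$ in Equations~\eqref{v1altformulaeq}--\eqref{v3altformulaeq}, with $Q$-membership of $e^{i\pm}$ or $a$ being exactly the geometric encoding of that trichotomy. Where you differ is in how you pass from the denominators to sides of $P^S$. The paper first invokes Lemma~\ref{vertexpositionlemma} to convert ``same/opposite side of $P^S$'' into ``opposite/same side of $a$,'' and then only needs to observe whether the two deviations from $a$ have opposite signs, i.e.\ whether the denominators change sign. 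You instead evaluate the affine form $g(x) = A_1 x_1 + A_2 x_2 + \delta$ defining $P^S$ directly at the vertices, obtaining $g(v^{i\pm}) = g(a)\,\frac{\pm M/\kappa}{\pm M/\kappa - c_i}$ and reading the answer off the sign of the product. This makes your argument self-contained --- you effectively re-derive the relevant half of Lemma~\ref{vertexpositionlemma} through the extra factor of $\pm\frac{M}{\kappa}$ appearing in the numerator, whose sign flip cancels against the denominator's in the product --- at the cost of a slightly longer computation. The paper's route buys brevity by reusing an already-established geometric fact; yours buys an explicit, checkable formula for which side of $P^S$ each individual vertex lies on (not just whether the pair agrees), plus the useful observation that $e^{i+}$ and $e^{i-}$ always fall in the same region relative to $Q$, so the hypotheses are well posed. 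Both are complete proofs.
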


\begin{proof}
In light of Lemma~\ref{vertexpositionlemma} it is enough to determine under what conditions the vertices lie on the same or opposite sides of $a$ or at infinity.   This is determined completely by the denominators in Equations~\eqref{v1altformulaeq} \eqref{v2altformulaeq}, and \eqref{v3altformulaeq}, and the positions of $e^{i \pm}$ and $a$ relative to $Q$ simply provide a geometric representation of this.  When $e^{i \pm}$ or $a$ lie in $Q$, there is a sign change for the corresponding vertices so those vertices must lie on opposite sides of $a$.  If any of these points lie on $\partial Q$, the corresponding denominator is zero for one sign choice and the corresponding vertex lies at infinity.  If any of these points lie outside $\overline{Q}$ the corresponding denominators do not change sign so those vertices lie on the same side of $a$.
\end{proof}

With the help of this lemma, we can now completely characterize under what conditions we will produce ellipses, parabolas, and hyperbolas.

\begin{thm} \label{coniccharacterizationthm}
Let $C \cap S$ be the conic section for the cone $C = C(\ell_a, P_A, \kappa)$ where $\ell_a$ is non-horizontal, let $Q = Q_{A, \kappa}$ be the characterizing strip for $C$, and let $i \in \{1, 2\}$.
\begin{itemize}
\item If $\sigma_1(0)$ and $a$ lie in  $Q$, then $C \cap S$ is an ellipse.
\item If $\sigma_1(0)$ and $a$ lie in $\overline{Q}$, and at least one intersects $\partial Q$, then $C \cap S$ is a parabola.
\item If any of the vertices $e^{i \pm}$ of $\sigma_1(0)$ or $a$ lie in $S \backslash \overline{Q}$, then $C \cap S$ is a hyperbola.
\end{itemize}
\end{thm}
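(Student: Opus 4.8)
The plan is to convert each hypothesis on the position of $\sigma_1(0)$ and $a$ relative to the characterizing strip $Q$ into a statement about the active vertices, and then invoke the boundedness and component descriptions recorded just after Lemma~\ref{vertexpositionlemma}. I would begin with two reductions. Since $Q$ is a convex strip and the map $x \mapsto |A_1 x_1 + A_2 x_2|$ attains its maximum over the square $\sigma_1(0)$ at a vertex, each of the conditions $\sigma_1(0) \subset Q$, $\sigma_1(0) \subset \overline{Q}$, and $\sigma_1(0) \cap (S\setminus\overline{Q}) \neq \emptyset$ is equivalent to the same condition on the four vertices $e^{i\pm}$, where $e^{i\pm} \in Q$ amounts to $|A_i| < \frac{M}{\kappa}$. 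Convexity of $\overline Q$ also gives that the closed taxicab disk bounded by $\sigma_1(0)$ lies in $\overline{Q}$ whenever $\sigma_1(0)$ does, a fact I will reuse in the steep case.

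Next I would assemble the dictionary supplied by Lemmas~\ref{striplemma} and~\ref{vertexpositionlemma}. For $i\in\{1,2\}$ the pair $v^{i\pm}$ is finite and split by $a$ exactly when $e^{i\pm}\in Q$, has one member at infinity exactly when $e^{i\pm}\in\partial Q$, and is finite on a common ray from $a$ (with $P^S$ between them) exactly when $e^{i\pm}\in S\setminus\overline{Q}$; the identical correspondence governs $v^{3\pm}$ through the position of $a$. The observations after Lemma~\ref{vertexpositionlemma} then say that $C\cap S$ is an ellipse precisely when every active pair is finite and split by $a$, and a hyperbola precisely when some active pair is finite and shares a ray from $a$. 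With this dictionary the problem reduces to tracking, for each active reference line, the classification ($Q$, $\partial Q$, or outside $\overline{Q}$) of its governing point $e^{i\pm}$ or $a$.

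The delicate point, and the one I expect to be the main obstacle, is that when $\ell$ is not intermediate exactly one reference line is inactive, so its vertex pair never appears in the section even though the hypotheses still refer to its governing point. The crux is to show that the classification of the inactive governing point is never strictly more extreme than that of the active ones, so the statement phrased in terms of all of $\sigma_1(0)$ and $a$ stays correct. When $\ell$ is steep the inactive line is $\rho^3$; here $a_3=1$ dominates, so $a$ lies inside the disk bounded by $\sigma_1(0)$ and its classification relative to $Q$ is controlled by that of $\sigma_1(0)$ through the convexity remark above. When $\ell$ is shallow, say with $a_1$ dominant so that $|a_1|>|a_2|+1$ and $\rho^1$ is inactive, I would feed the two-sided estimate
\[
|A_1|\,|a_1| - |A_2|\,|a_2| \;\le\; |A_1 a_1 + A_2 a_2| \;\le\; |A_1|\,|a_1| + |A_2|\,|a_2|
\]
into the inequality $|a_1|>|a_2|+1$ to show that, as long as $e^{2\pm}\in\overline{Q}$, having $e^{1\pm}$ outside $\overline{Q}$ forces $a$ outside $\overline{Q}$, while having both $e^{2\pm}$ and $a$ inside $Q$ forces $e^{1\pm}$ inside $Q$. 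This is exactly what makes the inactive line harmless; the intermediate case requires no such estimate since all three reference lines are active.

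With these reductions in place the three bullets follow directly. If $\sigma_1(0)$ and $a$ lie in $Q$, every active pair is finite and split by $a$, so the section is a finite union of segments with no anti-adjacent vertices and no vertices at infinity, hence bounded: an ellipse. If some $e^{i\pm}$ or $a$ lies in $S\setminus\overline{Q}$, then either that point sits on an active reference line, producing directly an active pair sharing a ray from $a$, or the domination estimates above promote it to an active governing point outside $\overline{Q}$; in either case the complementary rays of an anti-adjacent pair split the section into two components: a hyperbola. The remaining regime is $\sigma_1(0),a\subset\overline{Q}$ with some governing point on $\partial Q$; the same estimates show this boundary point may be taken active, so some active vertex lies at infinity, yielding an unbounded ray via Theorem~\ref{sectionconstructionthm}, while no active pair lies strictly outside $\overline{Q}$, so there are no anti-adjacent vertices and the section remains a single connected unbounded component: a parabola.
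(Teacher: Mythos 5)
Your proposal is correct, and its skeleton --- translate the position of each governing point relative to $Q$ into the behaviour of the corresponding vertex pair via Lemma~\ref{striplemma}, then invoke the ellipse/hyperbola observations recorded after Lemma~\ref{vertexpositionlemma} --- is exactly the paper's strategy; the paper's entire proof is the single sentence that the result ``follows directly from Lemma~\ref{striplemma} by considering all active vertices at once.'' Where you genuinely diverge is that you notice, and then discharge, an obligation the paper passes over in silence: the hypotheses constrain all five governing points $e^{1\pm}$, $e^{2\pm}$, $a$, but when $\ell_a$ is steep or shallow one reference line is inactive and its vertex pair contributes nothing to $C \cap S$, so one must check that an inactive governing point is never classified more extremely (on $\partial Q$, or outside $\overline{Q}$) than every active one. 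Your domination estimates do exactly this --- convexity of $\overline{Q}$ together with $|a_1|+|a_2|<1$ handles the steep case, and feeding $|a_1|>|a_2|+1$ into $|A_1|\,|a_1|-|A_2|\,|a_2| \le |A_1 a_1 + A_2 a_2|$ handles the shallow case --- and they go through, including at transitional parameters where the dominance inequalities become equalities. What your route buys is a verification that the theorem is true as stated, with hypotheses on the whole of $\sigma_1(0)$ and $a$ rather than only on the active governing points; what the paper's terser route buys is brevity, at the cost of leaving this consistency check entirely implicit. The only spot where you could be a touch more explicit is the parabola bullet: since a vertex at infinity is deemed to lie on both sides of $P^S$, connectedness of the section rests on the fact (which your estimates do deliver) that every finite active pair lies on a common side of $P^S$, not merely on the absence of finite anti-adjacent pairs.
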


\begin{proof}
This follows directly from Lemma~\ref{striplemma} by considering all active vertices at once.
\end{proof}

See Figure~\ref{stripfig} for examples illustrating this result.

\begin{figure}
\begin{picture}(360,360)
\put(0,200){
\includegraphics[scale = .6, clip = true, draft = false]{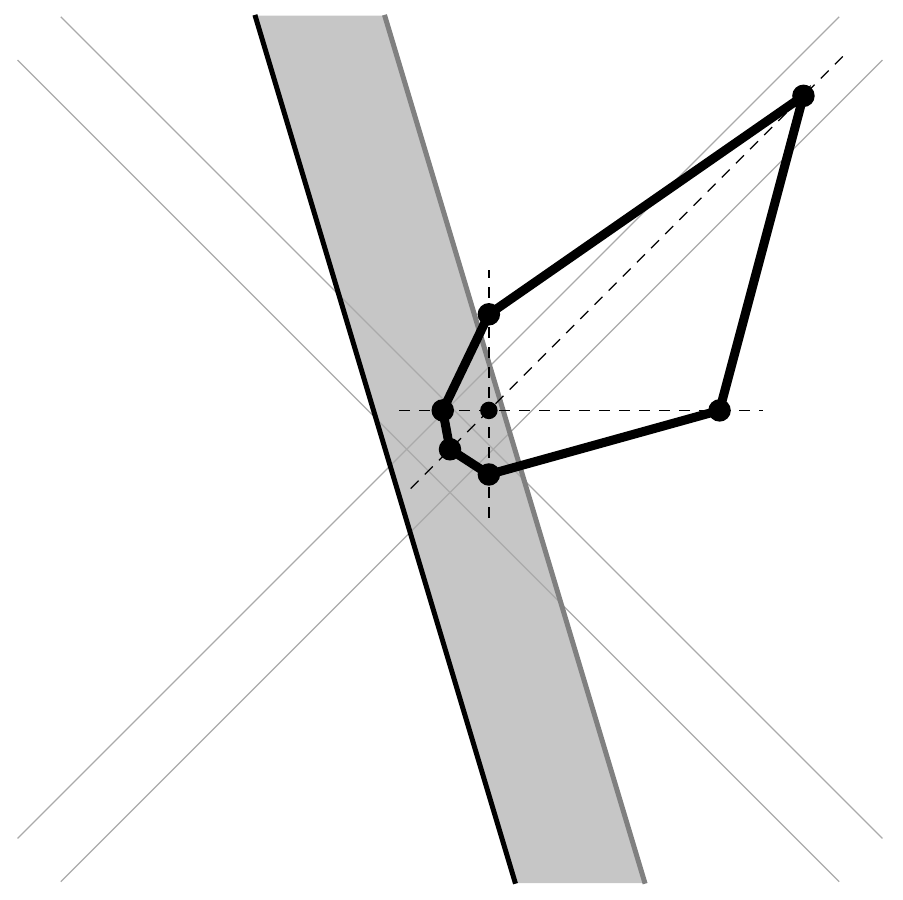}
}
\put(30,185){(a) \scriptsize $a = \left(\frac{9}{10}, \frac{9}{10},1 \right),\ \kappa = 1$}
\put(200,200){
\includegraphics[scale = .6, clip = true, draft = false]{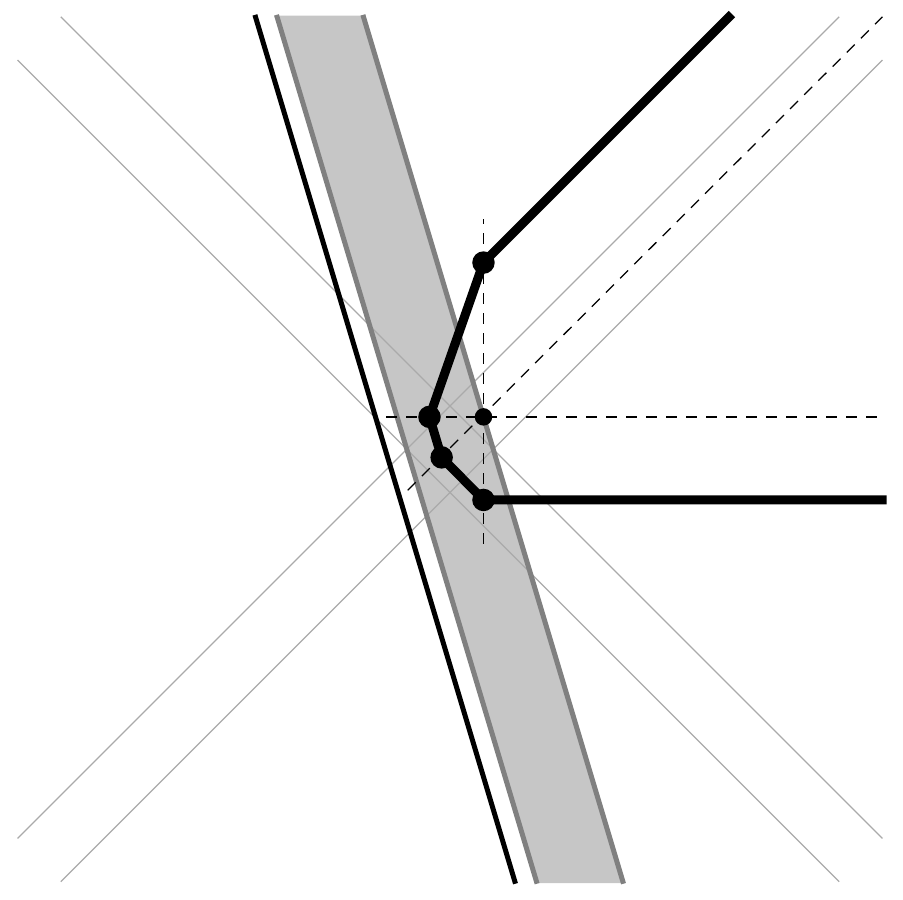}
}
\put(230,185){(b) \scriptsize $a = \left(\frac{31}{40}, \frac{3}{4},1 \right),\ \kappa = \frac{3}{2}$}
\put(0,15){
\includegraphics[scale = .6, clip = true, draft = false]{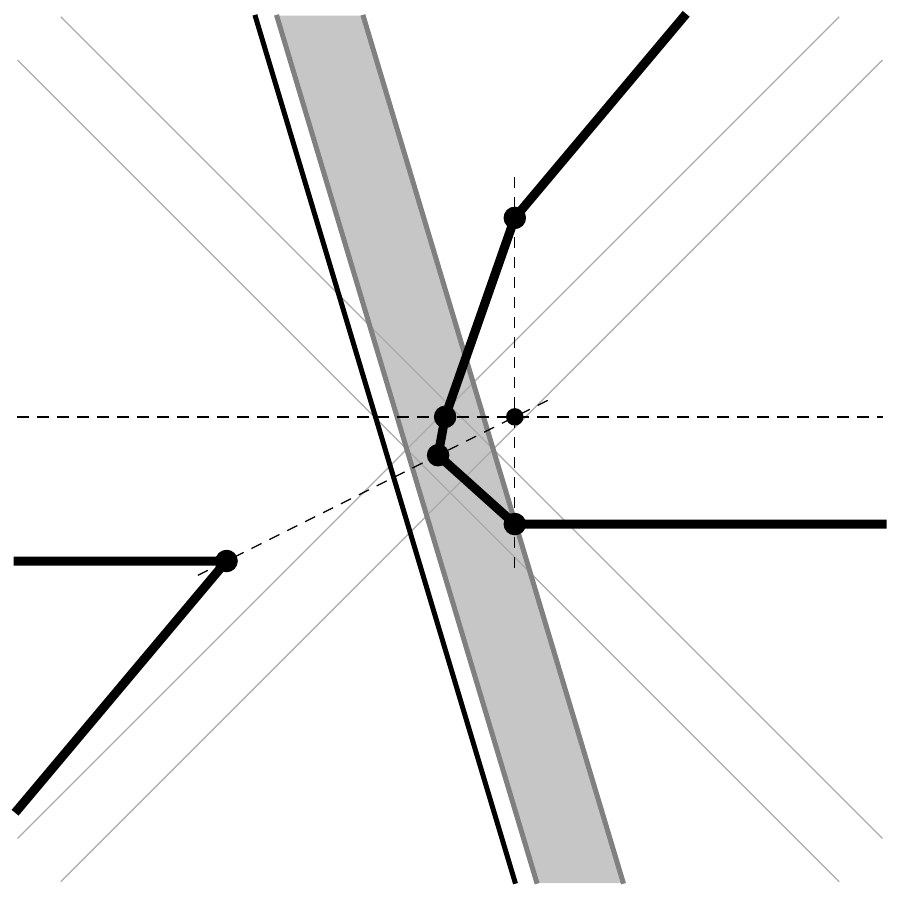}
}
\put(30,0){(c) \scriptsize $a = \left(\frac{3}{2}, \frac{3}{4},1 \right),\ \kappa = \frac{3}{2}$}
\put(200,15){
\includegraphics[scale = .6, clip = true, draft = false]{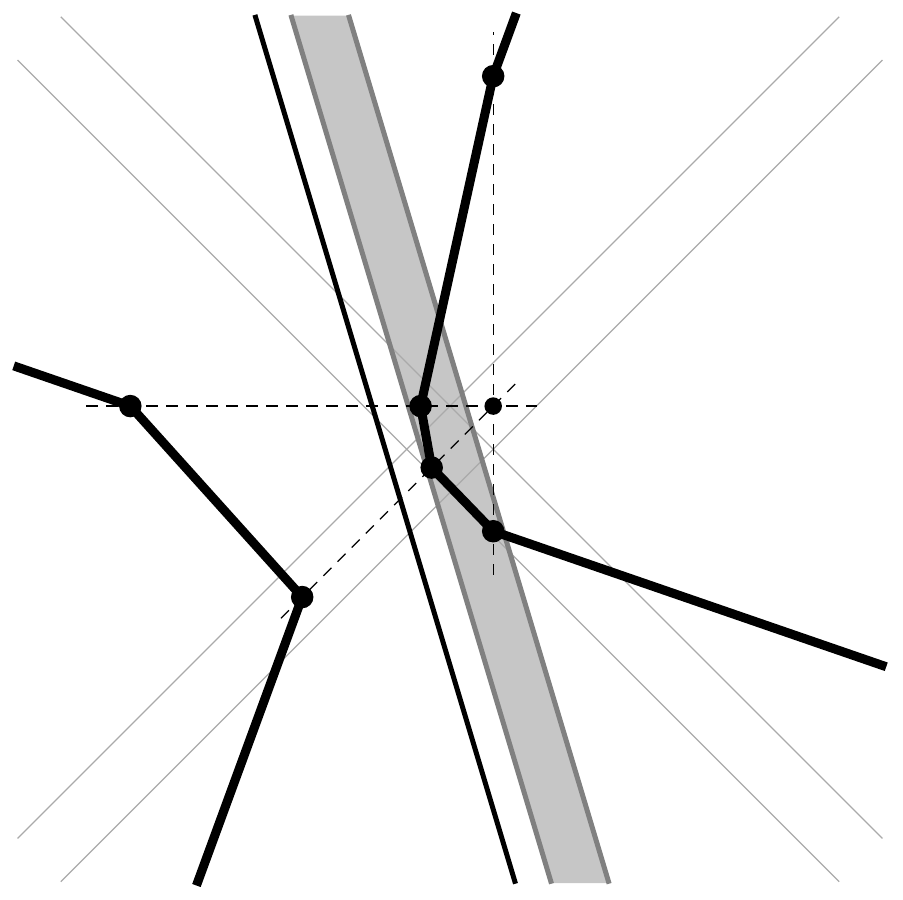}
}
\put(230,0){(d) \scriptsize $a = \left(1, 1, 1 \right),\ \kappa = \frac{9}{4}$}

\end{picture}
\caption{The type of conic section depends on the location of $a$ and $\sigma_1(0)$ relative to $Q$ which is shaded.  These are just four of many possibilities.  In these cases, $A = \left( \frac{2}{3}, \frac{1}{5}, 1 \right)$.  The darker line parallel to $Q$ is $P^S$ which coincides in (a) with one edge of $Q$.  The parameters $a$ and $\kappa$ are given under the corresponding image.  In (a), $a$ and $\sigma_1(0)$ lie inside $Q$ and we have an ellipse.  In (b), $a$ and $e^{1 \pm}$ lie on $\partial Q$, $e^{2 \pm}$ lies in $Q$, and we have a parabola.  In (c), $a$ lies outside $\overline{Q}$, $e^{1 \pm}$ lie on $\partial Q$, $e^{2 \pm}$ lies in $Q$, and we have a hyperbola.  In (d), $a$ and $e^{1 \pm}$ lie outside $\overline{Q}$, $e^{2 \pm}$ lies in $Q$, and we have a hyperbola.} \label{stripfig}
\end{figure}

\section{Conic sections when $\ell$ is horizontal} \label{conicsectionswhenellishorizontalsection}

While the approach here starts similarly to the non-horizontal case, there are some significant differences that develop.

\subsection{Vertices of the sections}

When $\ell$ is horizontal, $\ell \cap S$ is the empty set, so $a$ is not identified with a point in $S$.  Similarly, the reference lines $\rho^1$ and $\rho^2$ do not exist.  The reference line $\rho^3$ does exist, and we can use it as a way to represent $a$ by virtue of the fact that $\rho^3 = P^3 \cap S = \{x \in S: a_1 x_2 = a_2 x_1\}$ encodes $a$.

\begin{thm} \label{horizontalverticesthm}
Given a cone $C(\ell_a, P_A, \kappa)$ where $a = (a_1, a_2, 0)$ and $A = (A_1, A_2, \delta)$, and slicing plane $S = \{x \in \mathbb{R}^3:  x_3 = 1\}$, the reference line $\rho^3 = \{x \in S: a_1 x_2 = a_2 x_1\}$ is always active and the vertices lying on $\rho^3$ are
\[
v^{3 \pm} = (r^{\pm} a_1, r^{\pm} a_2, 1)
\]
where
\[
r^{\pm} = 1+ \frac{\pm \frac{M}{\kappa} + A_1 a_1 + A_2 a_2 + \delta}{-(A_1 a_1 + A_2 a_2)}.
\]
\end{thm}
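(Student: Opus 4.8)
The plan is to mirror the $\rho^3$ portion of Theorem~\ref{nonhorizontalverticesthm}, with the crucial modification that the distance to $\ell$ must be recomputed because $\ell$ now lies in the plane $x_3 = 0$. First I would record how $d(x, \ell)$ behaves on $S$ when $a = (a_1, a_2, 0)$. Since the third coordinate of every point $\ell(t)$ is zero and independent of $t$, the vertical contribution splits off, giving $d(x, \ell) = |x_3| + \inf_t\,(|x_1 - a_1 t| + |x_2 - a_2 t|)$. Applying the $N = 2$ case of Lemma~\ref{sumabslemma} to the two-term infimum, the minimizing $t$ is $x_1/a_1$ when $|a_1| > |a_2|$ and $x_2/a_2$ when $|a_2| > |a_1|$ (with an interval of minimizers in the transitional case), so that on $S$ the distance reduces to $1$ plus a single absolute value whose only transition line is $\rho^3$.

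Next I would establish that $\rho^3$ is always active and that $d(x, \ell) = 1$ identically on it. Geometrically, a point $x = (r a_1, r a_2, 1) \in \rho^3$ sits directly above $\ell(r) = (r a_1, r a_2, 0)$, and for any other parameter value $d(x, \ell(t)) = (|a_1| + |a_2|)\,|r - t| + 1 \geq 1$, so the nearest point is the vertical projection and $d(x, \ell) = 1$. Activity follows from observing that, because the term $|x_3|$ is the constant $1$ on $S$, the transitions that would have produced $\rho^1$ and $\rho^2$ in the non-horizontal case are pushed to infinity, leaving $\rho^3$ as the unique transition line of the distance-to-$\ell$ function, independent of which of $a_1, a_2$ dominates.

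With $d(x, \ell) = 1$ on $\rho^3$ and $d(x, P) = |A_1 x_1 + A_2 x_2 + \delta|/M$ from Theorem~\ref{disttoplanethm}, I would parameterize $\rho^3$ by $x = (r a_1, r a_2, 1)$ and substitute into $d(x, \ell) = \kappa\, d(x, P)$. This collapses to the single scalar equation $|r(A_1 a_1 + A_2 a_2) + \delta| = M/\kappa$. Resolving the absolute value into its two sign branches and solving the resulting linear equations for $r$ yields the two values; writing each solution as $1$ plus a correction term puts them in the stated form for $r^{\pm}$, with the two sign choices labeling the two vertices (the labeling agreeing with the stated formula after accounting for the overall sign in the numerator). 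Substituting back recovers $v^{3 \pm} = (r^{\pm} a_1, r^{\pm} a_2, 1)$. I would also note that non-degeneracy of the cone means $A_1 a_1 + A_2 a_2 = A_1 a_1 + A_2 a_2 + \delta\cdot 0 \neq 0$, so the denominator never vanishes and both vertices are finite in the horizontal case.

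The main obstacle is conceptual rather than computational: correctly handling the degeneration of the reference-line structure when $\ell$ becomes horizontal, that is, justifying that $\rho^3$ is the sole active reference line even though the framework of Theorem~\ref{disttolinethm} would ordinarily pair it with $\rho^1$ or $\rho^2$. The reduction of the conic equation to $r^{\pm}$ is then routine, though care is needed with the sign conventions so that the branches match the stated formula exactly.
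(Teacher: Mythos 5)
Your proposal is correct and follows essentially the same route as the paper's proof: identify the single relevant partial distance when $a_3 = 0$, observe that $d(x,\ell) = 1$ on $\rho^3$ because those points lie directly above $\ell$, and solve $1 = \kappa\,|A_1 x_1 + A_2 x_2 + \delta|/M$ along the parameterization $(r a_1, r a_2, 1)$ by resolving the absolute value. Your extra touches --- deriving the reduced distance formula by splitting off the $|x_3|$ term and invoking the $N=2$ case of Lemma~\ref{sumabslemma}, and noting that non-degeneracy forces $A_1 a_1 + A_2 a_2 \neq 0$ --- are consistent with, and slightly more explicit than, the paper's case split on $|a_1| \geq |a_2|$ versus $|a_2| \geq |a_1|$.
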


\begin{proof}
Consider two cases.  First, if $|a_1| \geq |a_2|$, then the partial distance being used is
\[
d(x, \ell) = d_{2, 3}(x, \ell) = \left|x_2 - a_2 \frac{x_1}{a_1} \right| + 1.
\]
Restricting attention to points on $\rho^3$, note that $d(x, \ell) = 1$, which should not be surprising since these points lie directly above $\ell$.  Hence, the vertices on $\rho^3$ are solutions to
\[
1 = \kappa\, \frac{|A_1 x_1 + A_2 x_2 + \delta |}{M}.
\]
Resolving the absolute values in two different ways and solving yields
\begin{align*}
x_i &= \frac{\delta \pm \frac{M}{\kappa}}{-A_1 a_1 - A_2 a_2} a_i \\
	&= \left(1+ \frac{\pm \frac{M}{\kappa} + A_1 a_1 + A_2 a_2 + \delta}{-(A_1 a_1 + A_2 a_2)} \right) a_i.
\end{align*}
For the second case, note that this formula is symmetric in $a_1$ and $a_2$, so if $|a_2| \geq |a_1|$, the calculation analogous to that above using $d(x, \ell) = d_{1, 3}(x, \ell)$ results in the same formula.
\end{proof}

While the vertices $v^{3 \pm}$ can be rewritten in a fashion similar to Equation~\eqref{v3altformulaeq}, we find that doing so is both somewhat artificial and unnecessary; artificial because $a$ is not a point in $S$ and unnecessary because the conic sections resulting from horizontal defining lines are relatively simple.

\subsection{Auxiliary points on $P^S$ and constructing the sections}

Unlike the case where $\ell$ is non-horizontal, here we do not have enough vertices to construct the resulting conic section using the methods of Theroem~\ref{sectionconstructionthm}, nor can we define the auxiliary points as the intersections of lines defined by vertices.  Nonetheless, auxiliary points exist, and once they are found, they can be used to construct the sections using Theorem~\ref{auxsectionconstructionthm}.

\subsubsection{Auxiliary points}

Here we establish the result for horizontal lines similar to Theorem~\ref{auxpointthm}.

\begin{thm}
Given the conic section $C \cap S$ for cone $C = C(\ell_a, P_A, \kappa)$ where $\ell_a$ is horizontal, there are four auxiliary points on $P^S$, two of which are active at a time.

If $|a_1| \geq |a_2|$, then the active auxiliary points are
\[
w^{I \pm} = \left(\frac{\pm A_2 a_1 - \delta a_1}{A_1 a_1 + A_2 a_2},
			\frac{-(\pm A_1 a_1 + \delta a_2)}{A_1 a_1 + A_2 a_2}, 1 \right).
\]
If $|a_2| \geq |a_1|$, then the active auxiliary points are
\[
w^{I\!I \pm} = \left(\frac{-(\pm A_2 a_2 + \delta a_1)}{A_1 a_1 + A_2 a_2},
			\frac{\pm A_1 a_2 - \delta a_2}{A_1 a_1 + A_2 a_2}, 1 \right).
\]
\end{thm}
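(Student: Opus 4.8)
The plan is to mirror the argument of Theorem~\ref{auxpointthm}, while tracking carefully the constant term that appears in the horizontal line-distance formula, since that constant is exactly what forces the construction to change. I would begin with the case $|a_1| \geq |a_2|$. On $S$ the relevant partial distance is $d_{2,3}(x,\ell) = \left| x_2 - \frac{a_2}{a_1} x_1 \right| + 1$, while $d(x,P) = \frac{1}{M}\left| A_1 x_1 + A_2 x_2 + \delta \right|$. Writing $\alpha = x_2 - \frac{a_2}{a_1} x_1$ and $\gamma = A_1 x_1 + A_2 x_2 + \delta$, the defining equation $d(x,\ell) = \kappa\, d(x,P)$ becomes $|\alpha| + 1 = \frac{\kappa}{M} |\gamma|$, and resolving the two absolute values produces four lines whose pieces make up the conic section. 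As in Theorem~\ref{auxpointthm}, the auxiliary points are the intersections of pairs of these lines that land on $P^S = \{\gamma = 0\}$; note also that $\alpha = 0$ is exactly the reference line $\rho^3$.

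Here is the key departure from the non-horizontal case, and the step I expect to be the main obstacle to state cleanly. In Theorem~\ref{auxpointthm} one fixes the plane-distance resolution and flips the line-distance resolution; the two lines then meet where the line-distance resolution vanishes, which forces the plane-distance resolution to vanish as well, landing the intersection on $P^S$. In the horizontal setting the line-distance resolution is $\pm\alpha + 1$ rather than $\pm\alpha$, so flipping it and equating yields $\alpha = 0$ together with a nonzero common value, placing the intersection on $\rho^3$ at $\gamma = M/\kappa$ rather than on $P^S$. To recover a point of $P^S$ one must instead fix the line-distance resolution and flip the plane-distance resolution: for each choice of sign $s$, the pair $s\alpha + 1 = \frac{\kappa}{M}\gamma$ and $s\alpha + 1 = -\frac{\kappa}{M}\gamma$ meets precisely where $\gamma = 0$, hence on $P^S$, and where $s\alpha + 1 = 0$. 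Thus the auxiliary points solve the two linear systems consisting of $\gamma = 0$ together with $x_2 - \frac{a_2}{a_1} x_1 = \pm 1$.

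Solving each system is then a routine elimination: substituting $x_2 = \frac{a_2}{a_1} x_1 \pm 1$ into $\gamma = 0$ and clearing the factor of $a_1$ gives the first coordinate, and back-substitution gives the second, yielding the points $w^{I\pm}$ after simplification. Non-degeneracy of the cone guarantees $A_1 a_1 + A_2 a_2 = A_1 a_1 + A_2 a_2 + A_3 a_3 \neq 0$, so both denominators are nonzero and the active auxiliary points are always finite, consistent with the relative simplicity of horizontal sections. For the case $|a_2| \geq |a_1|$ I would not repeat the computation: the roles of the two coordinates are interchanged, using $d_{1,3}(x,\ell) = \left| x_1 - \frac{a_1}{a_2} x_2 \right| + 1$, and the same argument, together with the symmetry that swaps $(a_1, A_1)$ with $(a_2, A_2)$ and exchanges the two coordinates, carries $w^{I\pm}$ to $w^{I\!I\pm}$.

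Finally, for the count I would observe that each of the two candidate partial-distance formulas, $d_{2,3}$ and $d_{1,3}$, contributes two intersections on $P^S$, giving four auxiliary points in all; but only the formula consistent with the ordering of $|a_1|$ and $|a_2|$ actually computes $d(x,\ell)$ on the relevant region, so only its two points are active. This is the analogue, for horizontal lines, of the remark following Theorem~\ref{auxpointthm} that such formulas define points whether or not the corresponding structure is active.
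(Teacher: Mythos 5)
Your proposal is correct and follows essentially the same route as the paper: resolve the two absolute values in $d_{2,3}(x,\ell) = \kappa\, d(x,P)$ into four lines, observe that the two intersection points at which the plane-distance resolution vanishes lie on $P^S$, solve the resulting linear systems, and obtain the second case by swapping the roles of the coordinates. Your added remark explaining why one must flip the plane-side resolution rather than the line-side resolution (because of the constant $+1$ in the horizontal partial distance) is a nice articulation of what the paper's parallelogram picture encodes implicitly, but it is the same argument.
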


Note that if $|a_1| = |a_2|$ then $w^{I \pm} = w^{I\!I \mp}$.

\begin{proof}
When $|a_1| \geq |a_2|$, the equation defining the cone is
\[
d_{2,3}(x, \ell) = \kappa \, d(x, P)
\]
which expands to
\[
\left|x_2 - a_2 \frac{x_1}{a_1} \right| + 1 = \kappa\, \frac{|A_1 x_1 + A_2 x_2 + \delta|}{M}.
\]
Each absolute value can be resolved in two ways, leading to four different equations.  Given a particular resolution, the equation determined by making the opposite choice on both absolute values will have the same slope as the original, so the four lines form a parallelogram.  Two of the vertices of this parallelogram are the vertices $v^{3 \pm}$.   These vertices correspond to where the absolute value on the left is equal to zero.  The other two vertices correspond to where the absolute value on the right is equal to zero, but the right hand side is $d(x, P)$ so these points lie on $P^S$.  Moreover, they are the intersections of the lines defined by
\[
\pm \left( x_2 - a_2 \frac{x_1}{a_1} \right) + 1 = 0
\]
and
\[
A_1 x_1 + A_2 x_2 + \delta = 0.
\]
Solving this system, we find the two auxiliary points
\[
w^{I \pm} = \left(\frac{\pm A_2 a_1 - \delta a_1}{A_1 a_1 + A_2 a_2},
			\frac{-(\pm A_1 a_1 + \delta a_2)}{A_1 a_1 + A_2 a_2}, 1 \right).
\]

If $|a_2| \geq |a_1|$, then the same analysis as above applies to the equation
\[
d_{1,3}(x, \ell) = \kappa \, d(x, P)
\]
which results in the auxiliary points
\[
w^{I\!I \pm} = \left(\frac{-(\pm A_2 a_2 + \delta a_1)}{A_1 a_1 + A_2 a_2},
			\frac{\pm A_1 a_2 - \delta a_2}{A_1 a_1 + A_2 a_2}, 1 \right).
\]
\end{proof}

\subsubsection{Constructing the sections:  connecting the dots}

When $\ell$ is horizontal, since there are only two vertices with which to work, we do not have a result analogous to Theorem~\ref{sectionconstructionthm}, but the auxiliary points allow us to construct the resulting conic section using Theorem~\ref{auxsectionconstructionthm}.  See Figure~\ref{hcasefig}.  Note that in this case, the auxiliary points are never at infinity.  We prove the horizontal case here.

\begin{figure}
\begin{picture}(300,500)
\put(90,400){
\includegraphics[scale = .4, clip = true, draft = false]{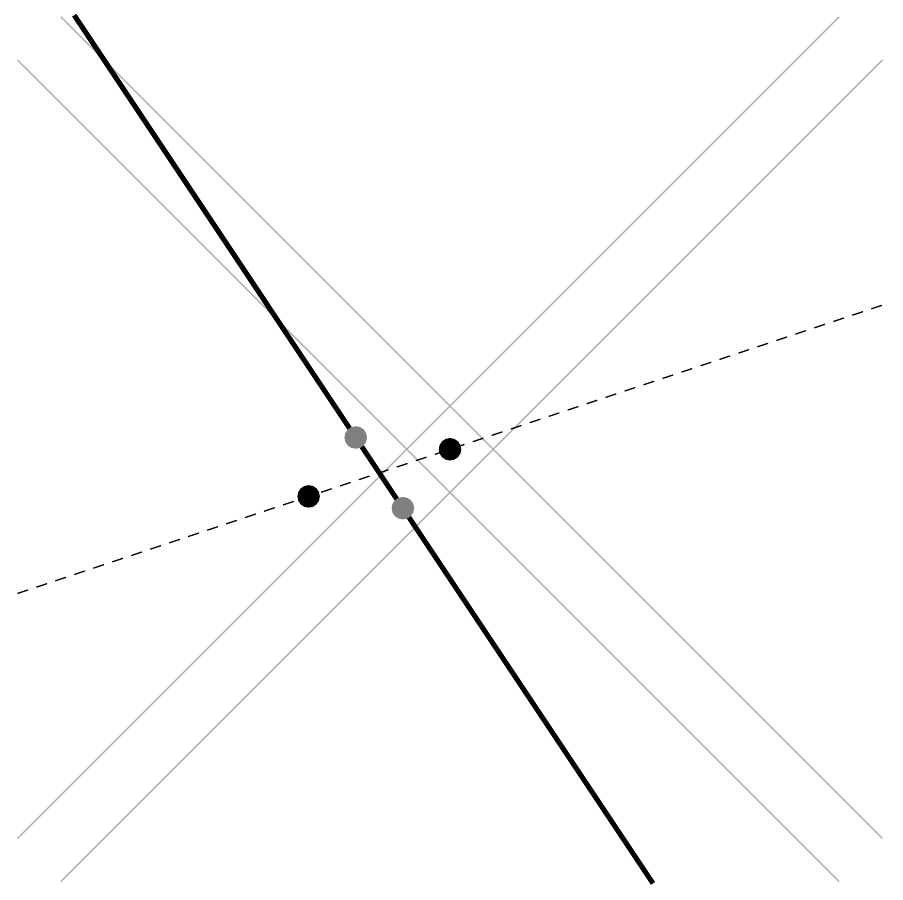}
}
\put(146,390){(a)}
\put(0,270){
\includegraphics[scale = .4, clip = true, draft = false]{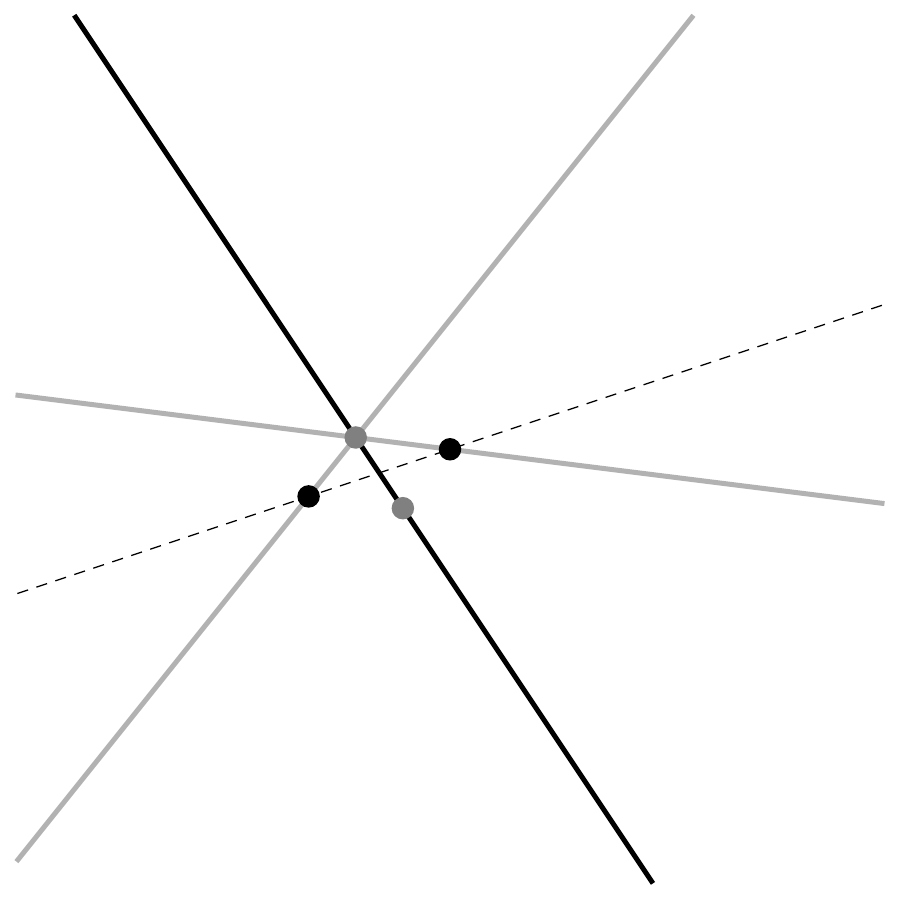}
}
\put(55,265){\vector(0,-1){15}}
\put(190,270){
\includegraphics[scale = .4, clip = true, draft = false]{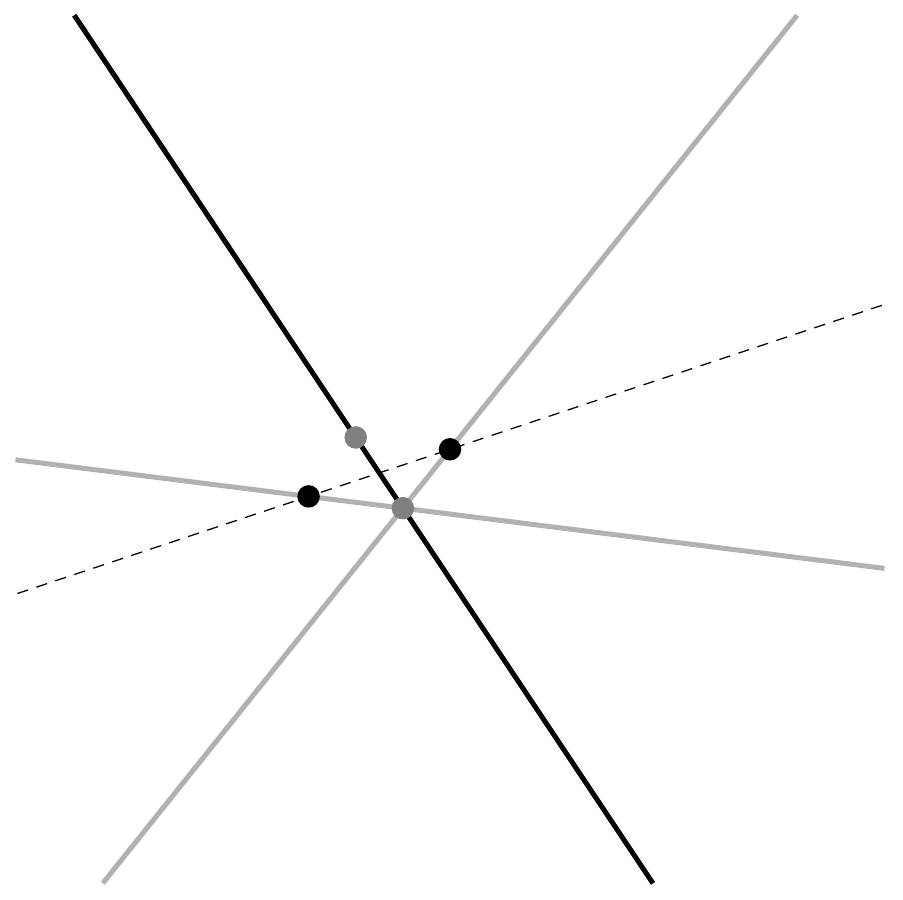}
}
\put(250,265){\vector(0,-1){15}}
\put(0,140){
\includegraphics[scale = .4, clip = true, draft = false]{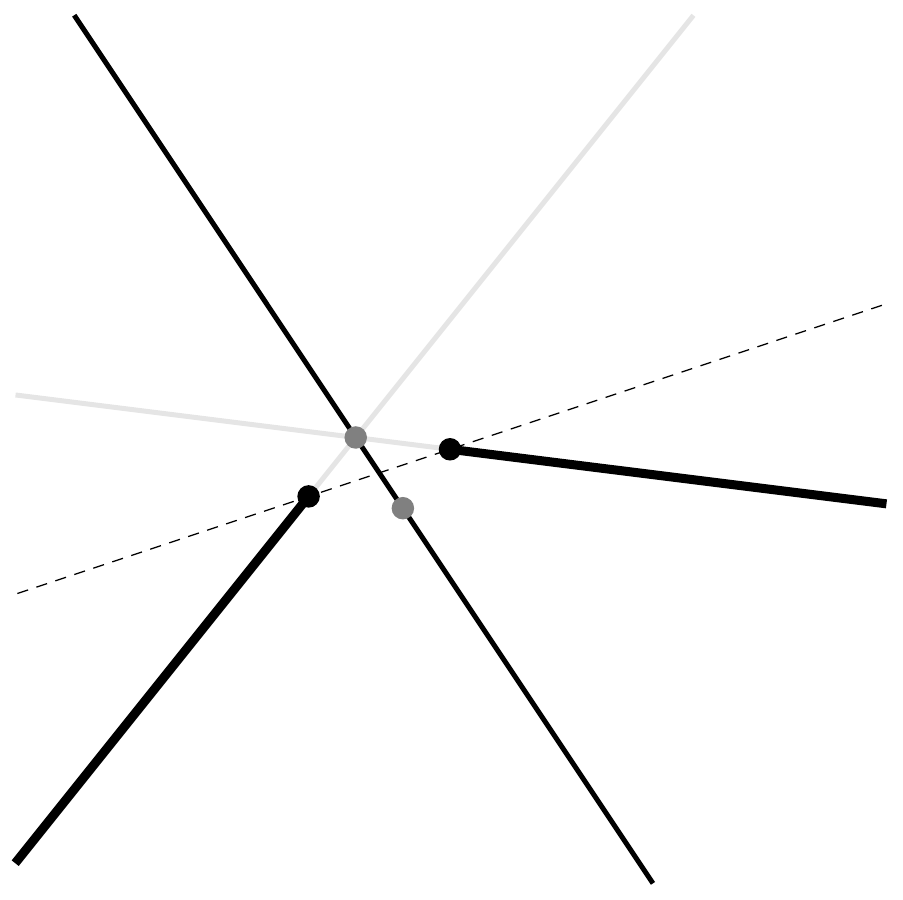}
}
\put(40,130){(b)}
\put(190,140){
\includegraphics[scale = .4, clip = true, draft = false]{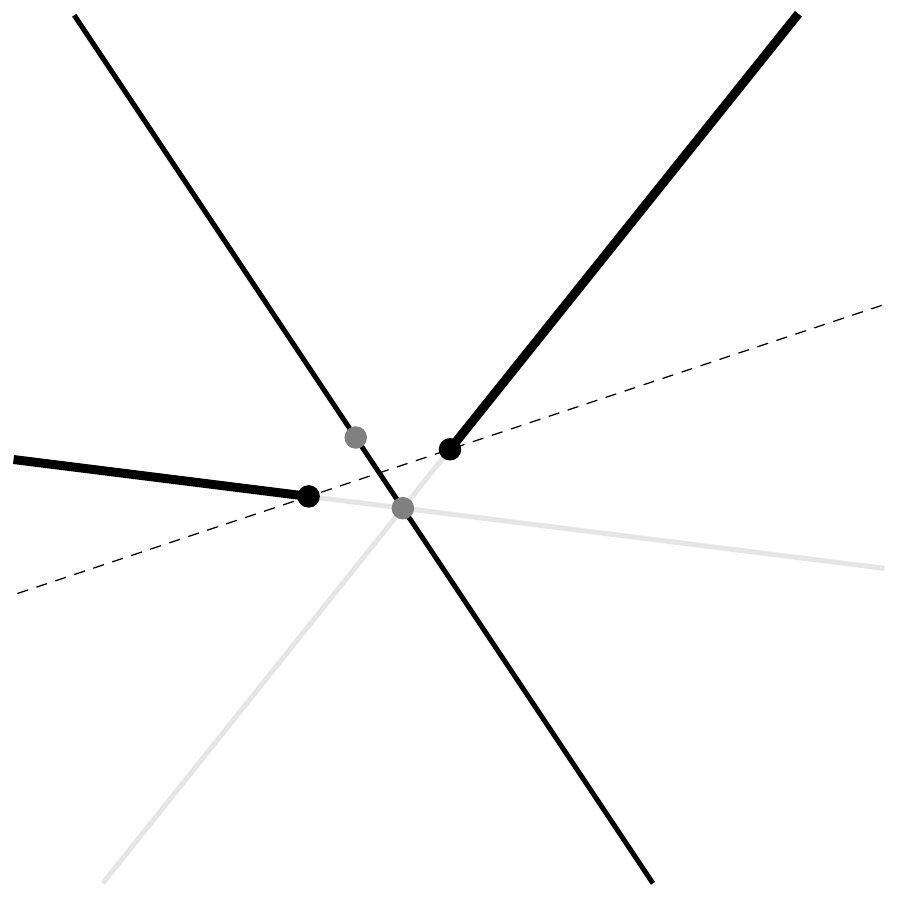}
}
\put(240,130){(c)}
\put(90,10){
\includegraphics[scale = .4, clip = true, draft = false]{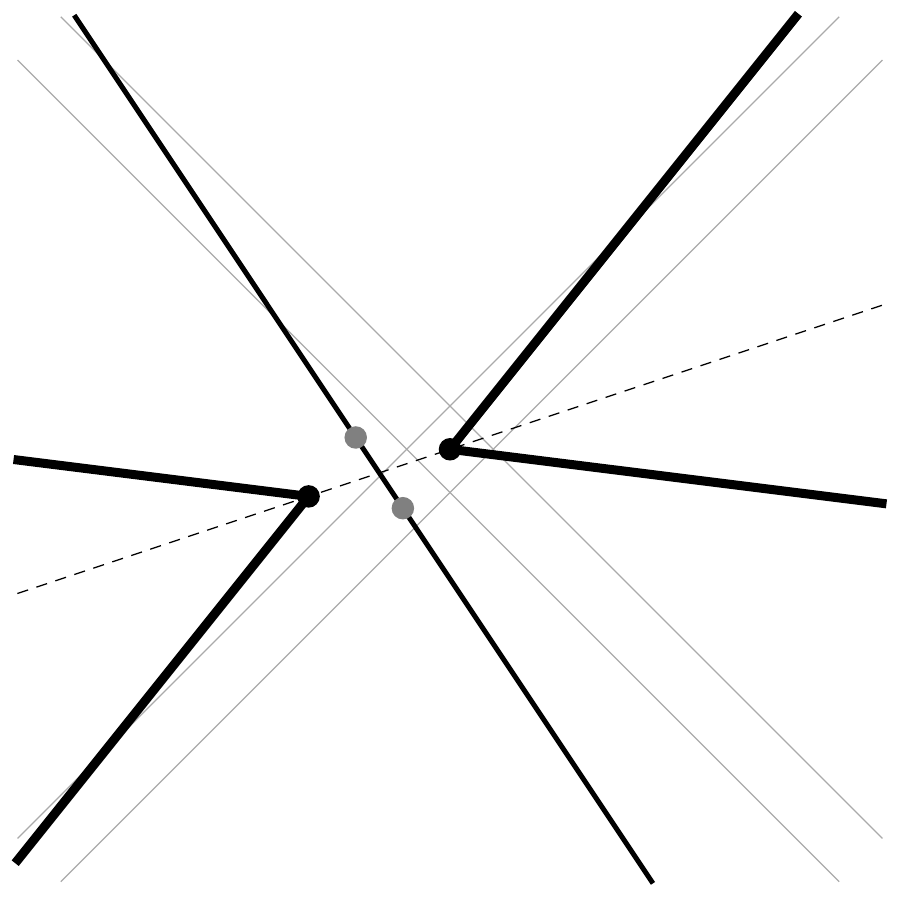}
}
\put(146,0){(d)}
\end{picture}
\caption{Using the rays through auxiliary points to produce a conic section.  In this example, $A = \left(\frac{1}{2}, \frac{1}{3}, 1\right)$, $a = (3, 1, 0)$, and $\kappa = 1$.  Starting with the vertices and active auxiliary points (a), for the rays associated to each auxiliary point, identify the points beyond a single vertex on each ray (b), (c), to produce the complete conic section (d).  Note that the vertices and auxiliary points are the vertices of a parallelogram.} \label{hcasefig}
\end{figure}

\begin{proof}[Proof of Theorem~\ref{auxsectionconstructionthm} when $\ell$ is horizontal]
The lines that define the auxiliary points arise from the edges of the conic section, so it remains only to determine which parts of these lines to include.

On each line, the auxiliary point $\omega$ and the vertex $v$ subdivide the line into the segment and two rays associated to $\omega$ and $v$.  The ray and segment terminating at $\omega$ cannot be part of the conic section because $\omega$ itself is not part of the section.  The remaining ray must therefore be the subset of the line that makes up part of the conic section.
\end{proof}

\subsection{Characteristics of sections} \label{hlinesetioncarsec}

Compared to the case where $\ell$ is non-horizontal, this case is much simpler.

\begin{thm} \label{horizontalconiccharacterizationthm}
If $\ell_a$ is horizontal, the conic section $C \cap S$ for the cone $C = C(\ell_a, P_A, \kappa)$ is always a hyperbola.
\end{thm}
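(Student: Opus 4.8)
The plan is to read the global shape of $C \cap S$ off the two vertices $v^{3\pm}$ from Theorem~\ref{horizontalverticesthm} together with the construction in Theorem~\ref{auxsectionconstructionthm}, and then verify the two properties that define a hyperbola in Section~\ref{sectioncarsec}: unboundedness and having exactly two connected components. First I would record that both vertices are finite and distinct. Since $\ell_a$ is horizontal we have $a_3 = 0$, so non-degeneracy of the cone forces $A_1 a_1 + A_2 a_2 \neq 0$; this is exactly the denominator appearing in $r^{\pm}$, so both $r^{\pm}$ are finite, and $r^+ - r^- = \frac{2M/\kappa}{-(A_1 a_1 + A_2 a_2)} \neq 0$ because $M, \kappa > 0$. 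Thus $v^{3+}$ and $v^{3-}$ are two distinct finite points of $S$, both lying on $\rho^3$.

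Next I would invoke the parallelogram picture established just above in the horizontal auxiliary-point theorem and its proof: resolving the two absolute values in the defining equation yields four lines that bound a parallelogram whose corners are $v^{3+}$, $v^{3-}$ and the two active auxiliary points on $P^S$. By the horizontal case of Theorem~\ref{auxsectionconstructionthm}, on each of these four edge-lines the section is the single ray based at the vertex $v^{3\pm}$ it contains and pointing away from the auxiliary point it contains. Hence $C \cap S$ is a union of four rays, two emanating from $v^{3+}$ and two from $v^{3-}$. Since each piece is a genuine ray, the section is unbounded, which already excludes an ellipse.

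The main work, and the step I expect to be the main obstacle, is showing that the two rays at $v^{3+}$ are disjoint from the two at $v^{3-}$, so that the section has two components rather than one and is therefore not a parabola. I would argue from the parallelogram's combinatorics: its four corners alternate between the section-vertices $v^{3\pm}$ and the two auxiliary points, so $v^{3+}$ and $v^{3-}$ are diagonally opposite and every side joins one $v^{3\pm}$ to one auxiliary point. A ray based at $v^{3+}$ and a ray based at $v^{3-}$ therefore lie either on opposite sides, which are parallel and distinct and so never meet, or on adjacent sides, which meet only at their common auxiliary point; but each ray is taken strictly on the far side of its vertex from that point and so excludes it. In either case the rays miss each other. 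I would also confirm that the parallelogram is non-degenerate, since its edge directions are those of $\rho^3$ and of $P^S$, and these are non-parallel precisely because $A_1 a_1 + A_2 a_2 \neq 0$---the same condition used above. It follows that the two rays at $v^{3+}$ form one connected component and those at $v^{3-}$ another, disjoint from it because $v^{3+} \neq v^{3-}$; so $C \cap S$ is unbounded with exactly two components, hence a hyperbola. The argument does not depend on whether $|a_1| \geq |a_2|$ or $|a_2| \geq |a_1|$, as the two cases use the symmetric auxiliary-point families $w^{I\pm}$ and $w^{I\!I\pm}$ and are otherwise identical.
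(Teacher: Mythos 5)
Your argument is correct in substance but reaches the conclusion by a different route than the paper. The paper's proof is a two-line computation: plugging the vertices into the affine form defining $P^S$ gives $A_1 v^{3 \pm}_1 + A_2 v^{3 \pm}_2 + \delta = \mp \frac{M}{\kappa}$, so $v^{3+}$ and $v^{3-}$ lie on opposite sides of $P^S$, and since each of the four rays of the section emanates from its vertex away from an auxiliary point lying on $P^S$ (hence never crosses $P^S$), the line $P^S$ itself separates the section into two unbounded components. You instead read the two-component structure off the combinatorics of the parallelogram: $v^{3\pm}$ at opposite corners, auxiliary points at the other two, parallel opposite sides distinct, adjacent sides meeting only at an auxiliary point that both rays exclude. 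Both arguments work; the paper's buys brevity and reuses the ``opposite sides of $P^S$'' criterion from Section~\ref{sectioncarsec}, while yours is more self-contained and makes the disjointness of the two branches completely explicit rather than leaving it to the separating-line picture.

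One slip to fix: you assert that the parallelogram's \emph{edge} directions are those of $\rho^3$ and $P^S$. They are not --- the edges are the four lines obtained by resolving the two absolute values, and it is the \emph{diagonals} that lie along $\rho^3$ (joining $v^{3+}$ to $v^{3-}$) and along $P^S$ (joining the two auxiliary points). The conclusion you want still holds: the two diagonals are non-parallel exactly when $A_1 a_1 + A_2 a_2 \neq 0$ (equivalently, one can check directly that the gradients of the two families of resolved lines have cross product proportional to $A_1 a_1 + A_2 a_2$), so non-degeneracy of the cone does give a non-degenerate parallelogram. Restate the justification in terms of the diagonals, or of the two linear forms, and the proof is complete.
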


This is consistent with previous results.  When $\ell$ is horizontal, $a$ can be thought of as lying at infinity, which is always outside $\overline{Q}$, and so $v^{3 \pm}$ always lie on opposite sides of $P^S$.  The computation to justify this can also be performed.

\begin{proof}
Note that
\begin{align*}
A_1 v^{3 \pm}_1 + A_2 v^{3 \pm}_2 + \delta
		&= \frac{\delta \pm \frac{M}{\kappa}}{-A_1 a_1 - A_2 a_2} (A_1 a_1 + A_2 a_2) + \delta \\
		&= -\delta \mp \frac{M}{\kappa} + \delta \\
		&= \mp \frac{M}{\kappa}.
\end{align*}
Since $P^S$ is defined by the equation $A_1 x_1 + A_2 x_2 = 0$, the fact that the vertices $v^{3 \pm}$ produce opposite signs in this computation implies that they must lie on opposite sides of $P^S$.
\end{proof}

\subsubsection{Vertices and auxiliary points as limits}
While the horizontal case seems to require a modification of the analysis for the non-horizontal case, this is a somewhat artificial consequence of the choices we are making for the parameters defining $\ell$.
If $\ell = \ell_a$ is horizontal, the vertices and auxiliary points can be seen as limits of corresponding points for non-horizontal lines.  This comes about when using $a \in \mathscr{L}'$ because the cases where $\ell_a$ is horizontal are represented somewhat differently than when $\ell_a$ is non-horizontal.  If we were to work through the corresponding analysis on $\mathscr{L}$ we would find that this artificial distinction would disappear.

\section{Some special cases} \label{specialcasessection}

We explore here a few special cases that serve to fill out the picture of taxicab conic sections.  This is by no means an exhaustive exploration and we leave it to the reader to discover other interesting cases.

\subsection{Horizontal defining plane}

This case complements the horizontal line scenario which itself is a special case of sorts.  In that setting the conic sections are always hyperbolas.  Here we find that if the defining plane is horizontal, all resulting conic sections are ellipses.

\begin{thm} \label{horizontalplanethm}
Given a cone $C(\ell_a, P_{(0,0,1)}, \kappa)$ where $a = (a_1, a_2, 1)$
and slicing plane $S = \{x \in \mathbb{R}^3:  x_3 = 1\}$, the resulting conic section is an ellipse and
\begin{itemize}
\item if $\rho^1$ is active, then the vertices lying on $\rho^1$ are
\[
v^{1 \pm} = a \pm \kappa(1 , 0, 0);
\]
\item if $\rho^2$ is active, then the vertices lying on $\rho^2$ are
\[
v^{2 \pm} = a \pm \kappa (0, 1, 0);
\]
\item if $\rho^3$ is active, then the vertices lying on $\rho^3$ are
\[
v^{3 \pm} = a \pm \kappa (a_1, a_2, 0).
\]
\end{itemize}
\end{thm}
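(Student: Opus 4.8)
The plan is to recognize this statement as a specialization of the non-horizontal analysis already in place, so that both the vertex formulas and the ellipse classification follow by substitution rather than by fresh computation. Since $a = (a_1, a_2, 1)$ has nonzero third component, $\ell_a$ is non-horizontal, and the defining plane $P_{(0,0,1)}$ has parameter $A = (0,0,1)$, giving $A_1 = A_2 = 0$, $\delta = 1$, and $M = \max\{|A_1|, |A_2|, \delta\} = 1$. First I would record these values at the outset, since every subsequent simplification flows from them.

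Next I would substitute these parameters into the vertex formulas of Theorem~\ref{nonhorizontalverticesthm}. For $v^{1\pm}$ the fraction becomes $\frac{0 + 0 + 1}{\pm \frac{1}{\kappa} - 0} = \pm\kappa$, so that $v^{1\pm} = (a_1 \pm \kappa, a_2, 1) = a \pm \kappa(1,0,0)$, and the computation for $v^{2\pm}$ is identical by the symmetry between the first two coordinates. For $v^{3\pm}$ the scalar $r^\pm = 1 + \frac{1}{\pm \frac{1}{\kappa}} = 1 \pm \kappa$, whence $v^{3\pm} = ((1\pm\kappa)a_1, (1\pm\kappa)a_2, 1) = a \pm \kappa(a_1, a_2, 0)$. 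Each identity holds precisely on its reference line when that line is active, matching the three bulleted claims.

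For the ellipse classification I would appeal to Theorem~\ref{coniccharacterizationthm} via the characterizing strip $Q_{A,\kappa}$. The key observation is that with $A_1 = A_2 = 0$ the linear form $A_1 x_1 + A_2 x_2$ vanishes identically, so the defining condition $|A_1 x_1 + A_2 x_2| < \frac{M}{\kappa}$ reduces to $0 < \frac{1}{\kappa}$, which holds for every $\kappa \in (0,\infty)$. Hence $Q_{A,\kappa}$ is all of $S$; in particular both $\sigma_1(0)$ and $a$ lie in $Q$, and the first bullet of Theorem~\ref{coniccharacterizationthm} yields that $C \cap S$ is an ellipse.

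There is no serious obstacle here: the content of the theorem is that a horizontal defining plane collapses all the parameter-dependent denominators to $\pm\kappa$ and widens the characterizing strip to the whole plane. The only points requiring a moment's care are confirming $M = 1$ so that the ratios $M/\kappa$ simplify cleanly, and recognizing that the vanishing of $A_1$ and $A_2$ is exactly what forces each vertex pair to straddle $a$ symmetrically, which (through Lemma~\ref{vertexpositionlemma} and the observations following it) is what makes every pair produce a segment rather than a ray and hence yields a bounded section.
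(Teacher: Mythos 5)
Your proposal is correct and follows essentially the same route as the paper: the paper likewise obtains the vertex formulas by direct substitution of $A_1 = A_2 = 0$, $\delta = M = 1$ into Equations~\eqref{v1altformulaeq}--\eqref{v3altformulaeq}, and deduces the ellipse classification from the fact that the characterizing strip $Q_{A,\kappa}$ becomes all of $S$. Your write-up simply spells out the arithmetic that the paper leaves implicit.
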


\begin{proof}
The fact that the conic section is always an ellipse follows from the fact that, in this case, the characterizing strip $Q$ is all of $S$.  The formulas for the vertices follow directly from Equations~\eqref{v1altformulaeq}, \eqref{v2altformulaeq},  and \eqref{v3altformulaeq}.
\end{proof}

From the formulas for the vertices, we can see that $\kappa$ just causes a rescaling of the vertices around $a$ and so does not affect the shape.  We can also see that if $\ell$ is steep, the resulting conic section is a taxicab circle, if $\ell$ is shallow or transitional, the resulting conic section is a parallelogram, with rhombi occurring when $a$ lies on a coordinate axis, and when $\ell$ is intermediate, the resulting conic section is a hexagon with parallel opposite sides.  The fact that opposite sides are always parallel can also be seen without knowing the formulas for the vertices by the fact that $P^S$ does not exist and so all auxiliary points lie at infinity.  See Figure~\ref{horizontalplanefig} for some specific examples of the various possibilities.

\begin{figure}
\begin{picture}(250,240)
\put(0,10)
{
\put(-50,130){
\includegraphics[scale = .4, clip = true, draft = false]{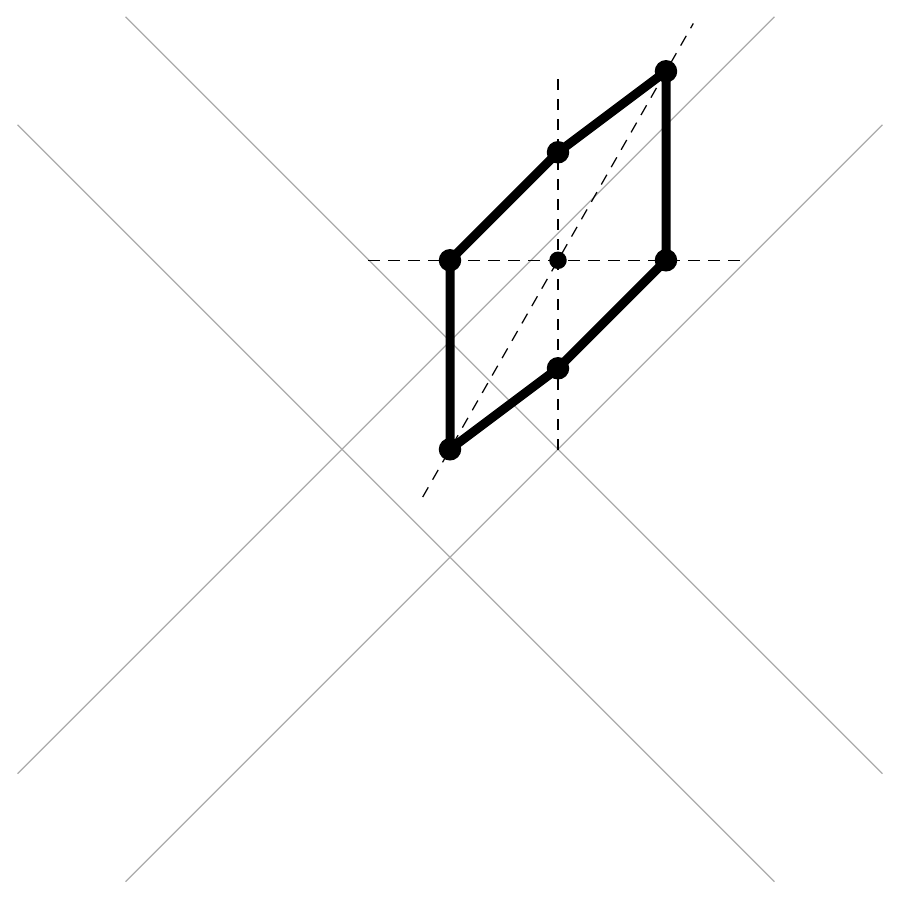}
}
\put(-30,120){$a_1 = 1,\ a_2 = \frac{7}{4}$}
\put(70,130){
\includegraphics[scale = .4, clip = true, draft = false]{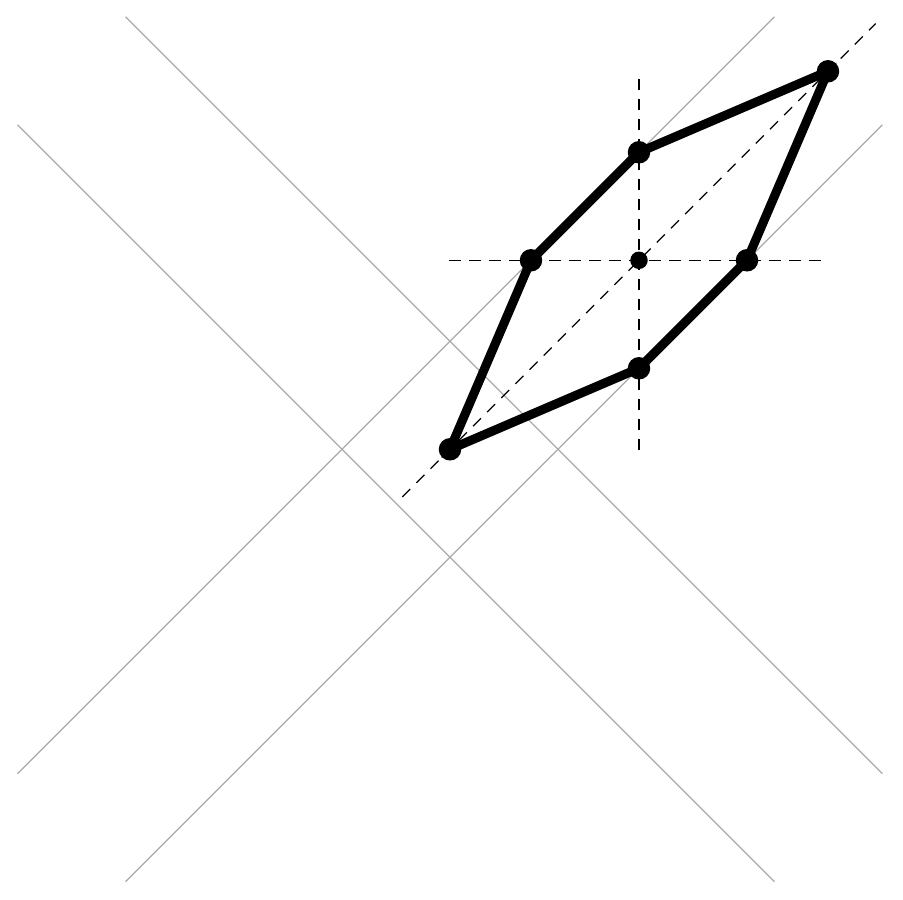}
}
\put(100,120){$a_1 = a_2 = \frac{7}{4}$}
\put(190,130){
\includegraphics[scale = .4, clip = true, draft = false]{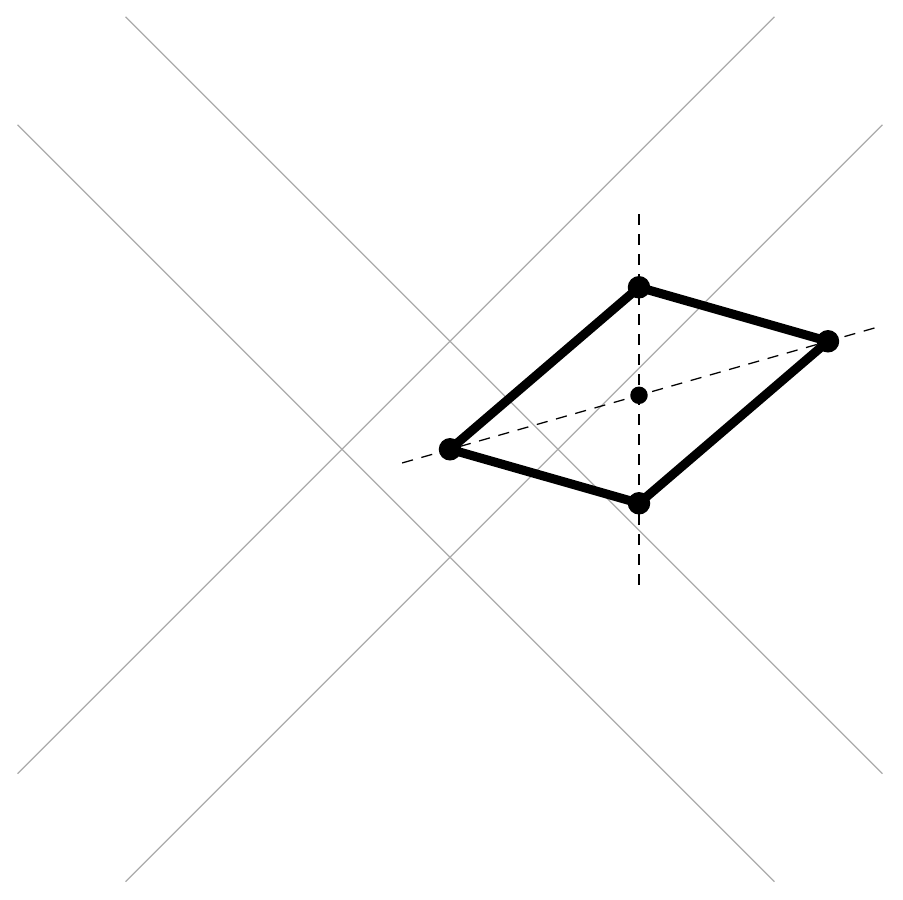}
}
\put(210,120){$a_1 = \frac{7}{4},\ a_2 = \frac{1}{2}$}
\put(10,0){
\includegraphics[scale = .4, clip = true, draft = false]{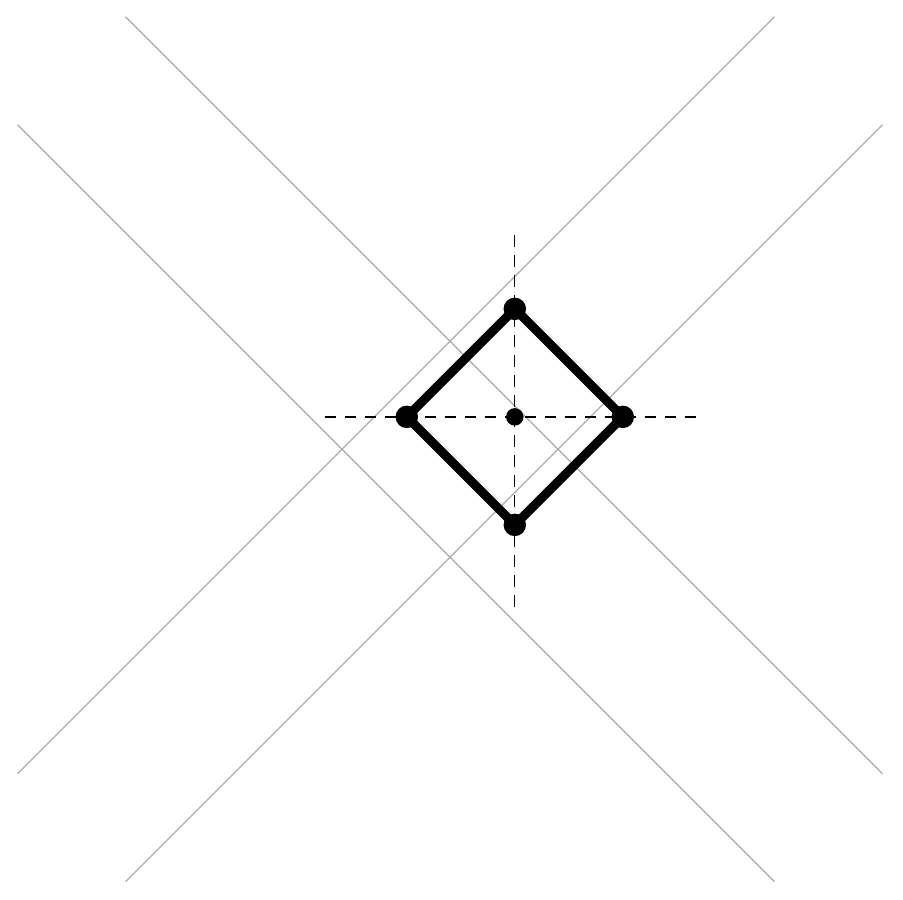}
}
\put(30,-10){$a_1 = \frac{3}{5},\ a_2 = \frac{3}{10}$}
\put(130,0){
\includegraphics[scale = .4, clip = true, draft = false]{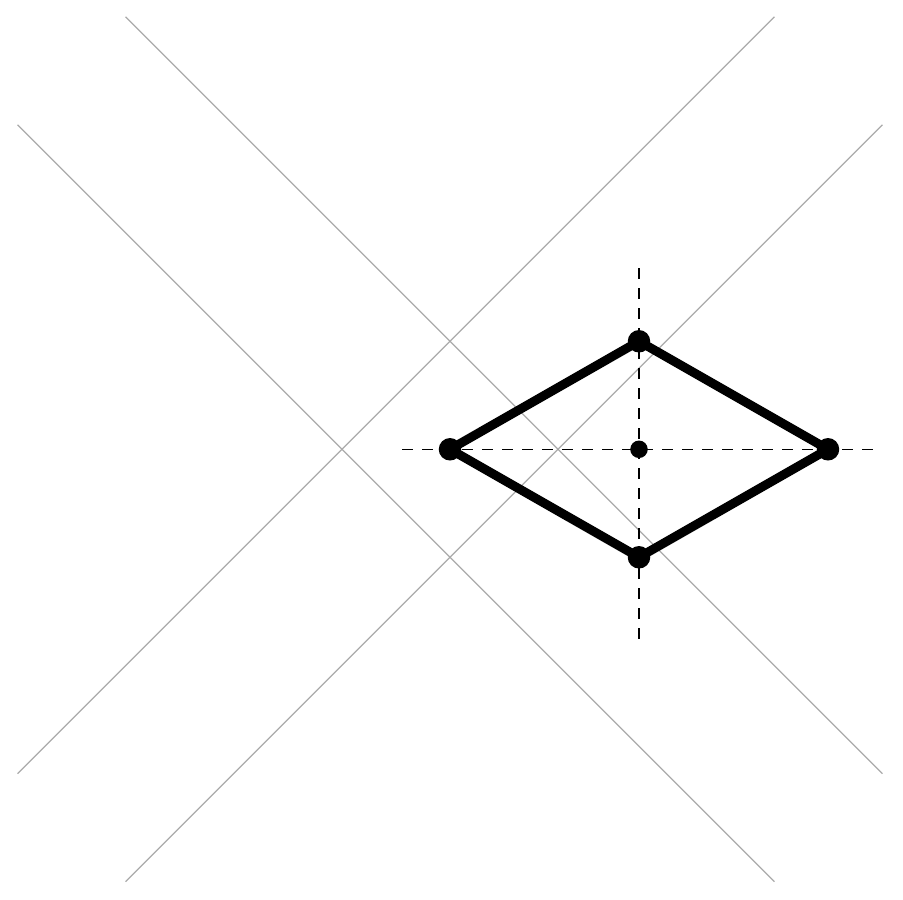}
}
\put(150,-10){$a_1 = \frac{7}{4},\ a_2 = 0$}
}
\end{picture}
\caption{Various conic sections when $P$ is horizontal.  In all cases, $\kappa = 1$.} \label{horizontalplanefig}
\end{figure}

\subsection{``Perpendicular'' line and plane} \label{lineplaneperpsubsec}
As mentioned earlier, there is no definition of angle that naturally arises from the taxicab distance.  Nonetheless, it is worth considering the special case where $a$ is a multiple of $A$, and this is the condition that will be implied when we say a line is perpendicular to a plane.

Note that the definition of angle in \cite{ThompsonDray} is such that an angle is a right angle in the Euclidean setting if and only if it is a right angle in the taxicab setting.  They do not explore the notion of taxicab angles in $\mathbb{R}^3$ but it would be reasonable to expect the idea of ``perpendicular'' to be preserved in any suitably robust definition of angle for $(\mathbb{R}^3, d)$

In this situation, it is not the actual conic sections that are particularly interesting; they share the characteristics discussed already in Sections~\ref{sectioncarsec} and~\ref{hlinesetioncarsec}.  What is interesting is the set of parameters where the resulting conic sections are parabolas.

We restrict parameters to $\mathscr{P}'$ and $\mathscr{L}'$ so that $A = a$.  Also, since the particular cases involving vertical planes, and hence horizontal lines, always results in hyperbolas, moving forward we will work with $\delta = a_3 = 1$.

Let $B^E_r(x)$ be the open Euclidean ball of radius $r$ centered at $x$.  In $\mathbb{R}^2$ define
\[
U_{\kappa} = \begin{cases}
			B^E_{\frac{1}{2 \kappa}}\left(\frac{1}{2 \kappa}, 0 \right)
			\cup B^E_{\frac{1}{2 \kappa}}\left(0, \frac{1}{2 \kappa} \right) \\
			\qquad \mathrm{} \cup B^E_{\frac{1}{2 \kappa}}\left(-\frac{1}{2 \kappa}, 0 \right)
			\cup B^E_{\frac{1}{2 \kappa}}\left(0, -\frac{1}{2 \kappa} \right) \\
			\qquad \mathrm{} \cup B^E_{\frac{1}{\sqrt{\kappa}}} (0, 0) &\ \mathrm{if}\ 0 < \kappa < 1, \\
			\left(-\frac{1}{\kappa}, \frac{1}{\kappa} \right)
			\times \left(-\frac{1}{\kappa}, \frac{1}{\kappa} \right)
			\cap B^E_{\frac{1}{\sqrt{\kappa}}} (0, 0) &\ \mathrm{if}\ \kappa \geq 1.
			\end{cases}
\]
Note that when $0 < \kappa < \frac{1}{2}$, the disk at the origin is a subset of the others and so does not add anything to the union or contribute to the boundary.  In a complementary fashion, when $\kappa > 2$, the disk at the origin contains the square and so does not participate in the intersection or the boundary.

\begin{thm}
Let $A \in \mathscr{P}'$ with $\delta = 1$.  Then, given a cone $C = C(\ell_A, P_A, \kappa)$ the resulting conic section $C \cap S$ is an ellipse when $A$ lies in $U_{\kappa} \times \{(0, 0, 1)\}$, a parabola when $A$ lies in $\partial U_{\kappa} \times \{(0, 0, 1)\}$, and a hyperbola when $A$ lies outside $\overline{U_{\kappa}} \times \{(0, 0, 1)\}$.
\end{thm}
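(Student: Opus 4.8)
The plan is to reduce everything to Theorem~\ref{coniccharacterizationthm} and then carry out a single geometric computation identifying the characterizing-strip conditions with membership in $U_\kappa$. First I would record the consequences of the perpendicularity hypothesis: with $A \in \mathscr{P}'$, $\delta = 1$, and $a = A = (A_1, A_2, 1)$, the defining line is non-horizontal (since $a_3 = 1$) and the cone is non-degenerate (since $A_1 a_1 + A_2 a_2 + 1 = A_1^2 + A_2^2 + 1 > 0$), so Theorem~\ref{coniccharacterizationthm} applies directly.

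Next I would evaluate the linear form $|A_1 x_1 + A_2 x_2|$ appearing in $Q = Q_{A,\kappa}$ at the three relevant locations. At $x = a = A$ it equals $A_1^2 + A_2^2$, at $e^{1\pm} = (\pm 1, 0, 1)$ it equals $|A_1|$, and at $e^{2\pm} = (0, \pm 1, 1)$ it equals $|A_2|$. Because $Q$ is the region between two parallel lines and $\sigma_1(0)$ bounds the taxicab disk whose extreme points are exactly $e^{1\pm}, e^{2\pm}$, the condition ``$\sigma_1(0) \subset Q$'' is equivalent to all four vertices lying in $Q$. Writing $M = \max\{|A_1|, |A_2|, 1\}$ and setting
\[
h(A) = \max\{A_1^2 + A_2^2,\, |A_1|,\, |A_2|\} - \frac{M}{\kappa},
\]
Theorem~\ref{coniccharacterizationthm} then says that $C \cap S$ is an ellipse exactly when $h(A) < 0$, a parabola exactly when $h(A) = 0$, and a hyperbola exactly when $h(A) > 0$. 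The whole theorem thus reduces to showing $\{h < 0\} = U_\kappa$, $\{h = 0\} = \partial U_\kappa$, and $\{h > 0\} = S \setminus \overline{U_\kappa}$.

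The main work is the identity $\{h < 0\} = U_\kappa$, which I would establish by casework on $\max\{|A_1|, |A_2|\}$ (equivalently, on whether $A$ is steep). When $\max\{|A_1|,|A_2|\} \le 1$ we have $M = 1$, and $h < 0$ reads $A_1^2 + A_2^2 < 1/\kappa$ together with $\max\{|A_1|,|A_2|\} < 1/\kappa$; when $\max\{|A_1|,|A_2|\} > 1$ we have $M = \max\{|A_1|,|A_2|\}$, and $h < 0$ reads $\kappa(A_1^2 + A_2^2) < \max\{|A_1|,|A_2|\}$ together with $\kappa < 1$. The crucial algebraic step is completing the square: $\kappa(A_1^2 + A_2^2) < A_1$ is exactly $B^E_{1/(2\kappa)}(1/(2\kappa), 0)$, and taking the four sign/coordinate variants shows that $\{\kappa(A_1^2+A_2^2) < \max\{|A_1|,|A_2|\}\}$ is precisely the union of the four off-center disks, while $\{A_1^2 + A_2^2 < 1/\kappa\} = B^E_{1/\sqrt{\kappa}}(0,0)$ is the central disk. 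I would then check, for $0 < \kappa < 1$, that the two cases assemble into (four disks) $\cup$ (central disk) $= U_\kappa$, verifying both inclusions using that a central-disk point automatically satisfies $\kappa(A_1^2+A_2^2) < 1 \le \max\{|A_1|,|A_2|\}$ once it is steep, and conversely; and, for $\kappa \ge 1$, that the steep case is vacuous (it would force $\kappa < 1$), so $\{h<0\}$ collapses to $\{\max\{|A_1|,|A_2|\} < 1/\kappa\} \cap \{A_1^2+A_2^2 < 1/\kappa\} = U_\kappa$.

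Finally, the parabola and hyperbola cases follow by passing to closures and complements: repeating the casework with non-strict inequalities gives $\{h \le 0\} = \overline{U_\kappa}$, whence $\{h = 0\} = \overline{U_\kappa} \setminus U_\kappa = \partial U_\kappa$ and $\{h > 0\} = S \setminus \overline{U_\kappa}$. I expect the main obstacle to be bookkeeping in the previous paragraph rather than any single hard idea: keeping the two descriptions of $U_\kappa$ (a union for $0 < \kappa < 1$, an intersection for $\kappa \ge 1$) consistent with the case split on $M$, and in particular confirming that the behavior is continuous across the transition $\max\{|A_1|,|A_2|\} = 1$, where $A_1^2 + A_2^2$ is the dominant term and no spurious boundary is introduced. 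Some care is also needed to confirm that the closure of the open set $U_\kappa$ really is the non-strict region, which holds because each constituent disk and the square have their expected closures and the finite union and intersection here introduce no lower-dimensional anomalies.
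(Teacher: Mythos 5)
Your argument is correct, and it takes a genuinely different route from the paper's. The paper proves this result directly from the vertex formulas: it notes that parabolas occur exactly when some vertex first goes to infinity, starts from the ellipse at $A = a = (0,0,1)$, expands outward in the fundamental wedge $0 \le A_2 \le A_1$ until a denominator vanishes, extends by taxicab isometries, and explicitly flags (and leaves to the reader) the bookkeeping about which vertices are active. You instead reduce everything to Theorem~\ref{coniccharacterizationthm}: substituting $a = A$ turns the characterizing-strip conditions into the single inequality $\max\{A_1^2 + A_2^2, |A_1|, |A_2|\} \lessgtr M/\kappa$, and the rest is the case split on $M$ plus completing the square to recognize $\kappa(A_1^2+A_2^2) < \pm A_i$ as the four off-center Euclidean disks. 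The two proofs hinge on the same algebra (vanishing denominators in the vertex formulas are exactly the equations $|A_i| = M/\kappa$ and $A_1^2+A_2^2 = M/\kappa$), but your organization buys two things: the active-vertex bookkeeping is absorbed into the already-proved strip theorem rather than redone by hand, and the ellipse/parabola/hyperbola trichotomy falls out of a single continuous function $h$ rather than a separate boundary-tracing argument. The one point you rightly flag --- that $\{h \le 0\}$ really is $\overline{U_\kappa}$ and hence $\{h=0\} = \partial U_\kappa$ --- does need the short convexity check you sketch, since the union/intersection descriptions of $U_\kappa$ change form at $\kappa = 1$ and at the transition $\max\{|A_1|,|A_2|\} = 1$; your verification that the two case descriptions glue consistently there is the only place where the argument could silently go wrong, and you have handled it.
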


Note that the Cartesian products with $\{(0, 0, 1)\}$ are just to ensure that the resulting sets lie in $S$.  See Figure~\ref{Aisaparabolaparametersfig} for examples of $\partial U_{\kappa}$ for various values of $\kappa$.

\begin{proof}
As discussed before, parabolas occur when at least one vertex lies at infinity, and all the remaining vertices lie on the same side of $P^S$.  As such, to find parabolas, noting that $A = a = (0, 0, 1)$ always results in an ellipse, extend outward until the first non-finite vertex is encountered.  To simplify the work, the computations can be performed in the wedge $0 \leq A_2 \leq A_1$, and then extended to the rest of the plane using taxicab isometries.  Some care must be taken to account for where various vertices are active.  The details are left to the reader.
\end{proof}

At first glance, it is perhaps surprising that Euclidean circles appear in these sets.  Algebraically, they result from the fact that, in the expressions for the vertices $v^{3 \pm}$, $A_i$ are multiplied by $a_i$ in the denominator.  In our setting $A = a$ so the parameters where one of $v^{3 \pm}$ is infinite solve an equation for a circle.  From a more geometric perspective, the condition that $A = a$ means that the defining line and defining plane are perpendicular in the Euclidean sense.  In other words, we are imposing a condition that is fundamentally Euclidean in nature.  Hence, perhaps it is not surprising to see some Euclidean structure sneaking in.

\begin{figure}
\begin{picture}(250,360)
\put(-50,260){
\includegraphics[scale = .4, clip = true, draft = false]{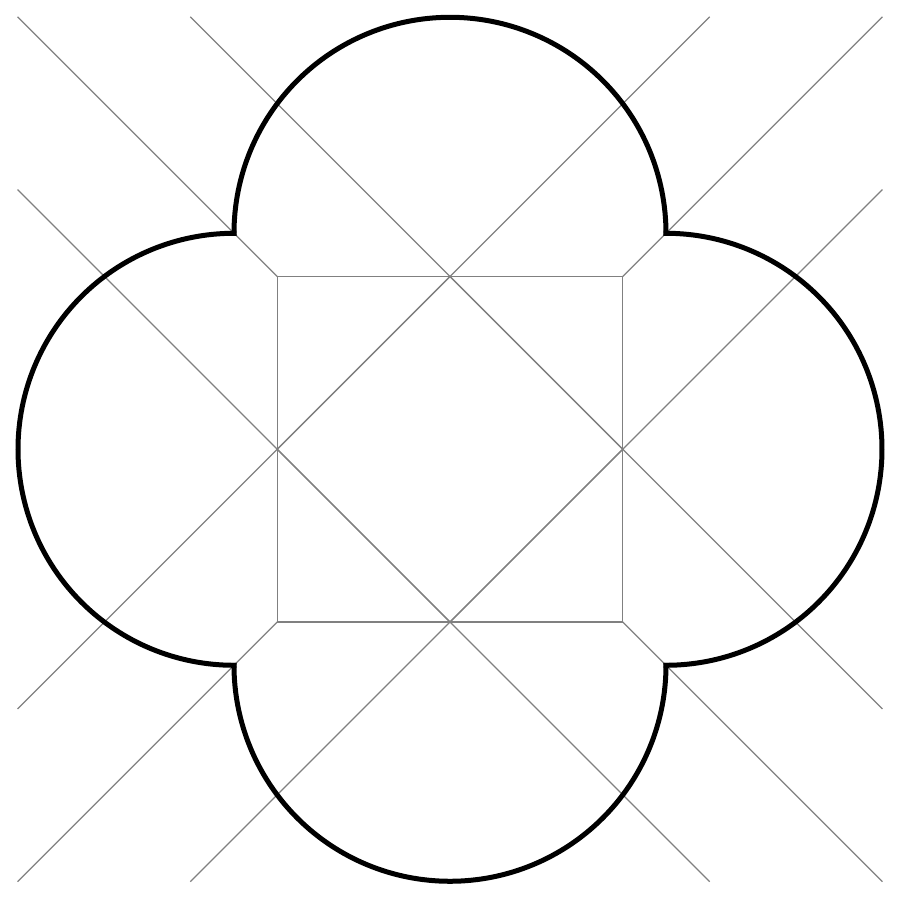}
}
\put(-7,250){$\kappa = \frac{2}{5}$}
\put(70,260){
\includegraphics[scale = .4, clip = true, draft = false]{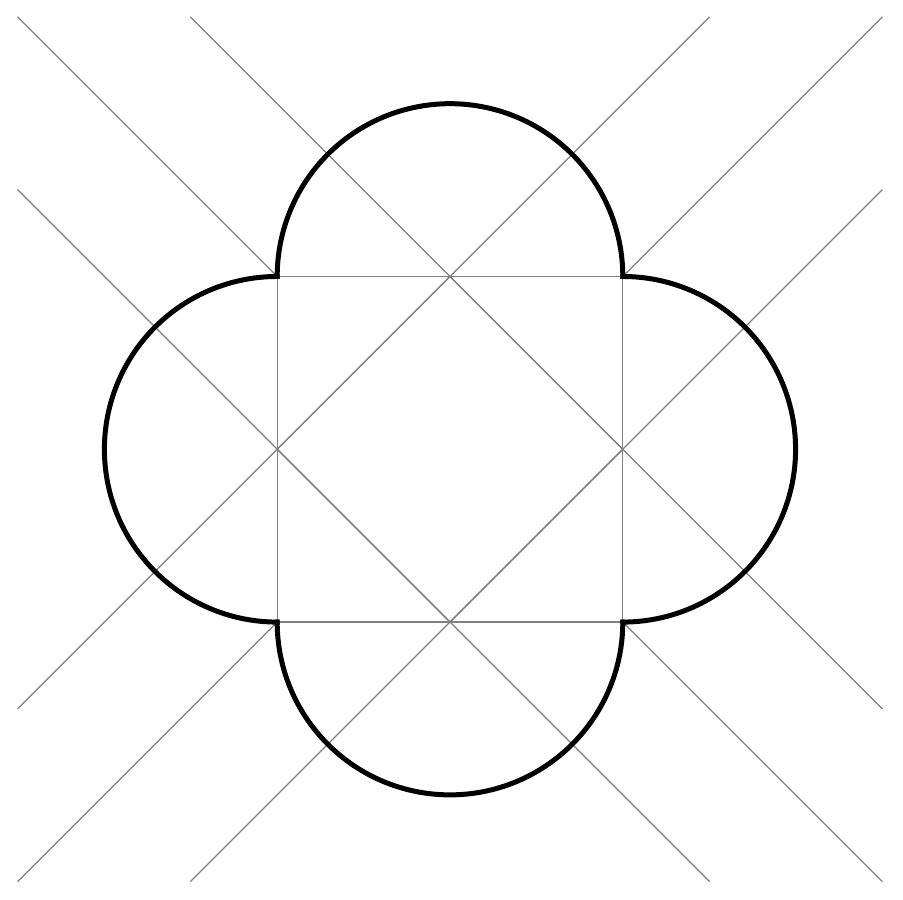}
}
\put(113,250){$\kappa = \frac{1}{2}$}
\put(190,260){
\includegraphics[scale = .4, clip = true, draft = false]{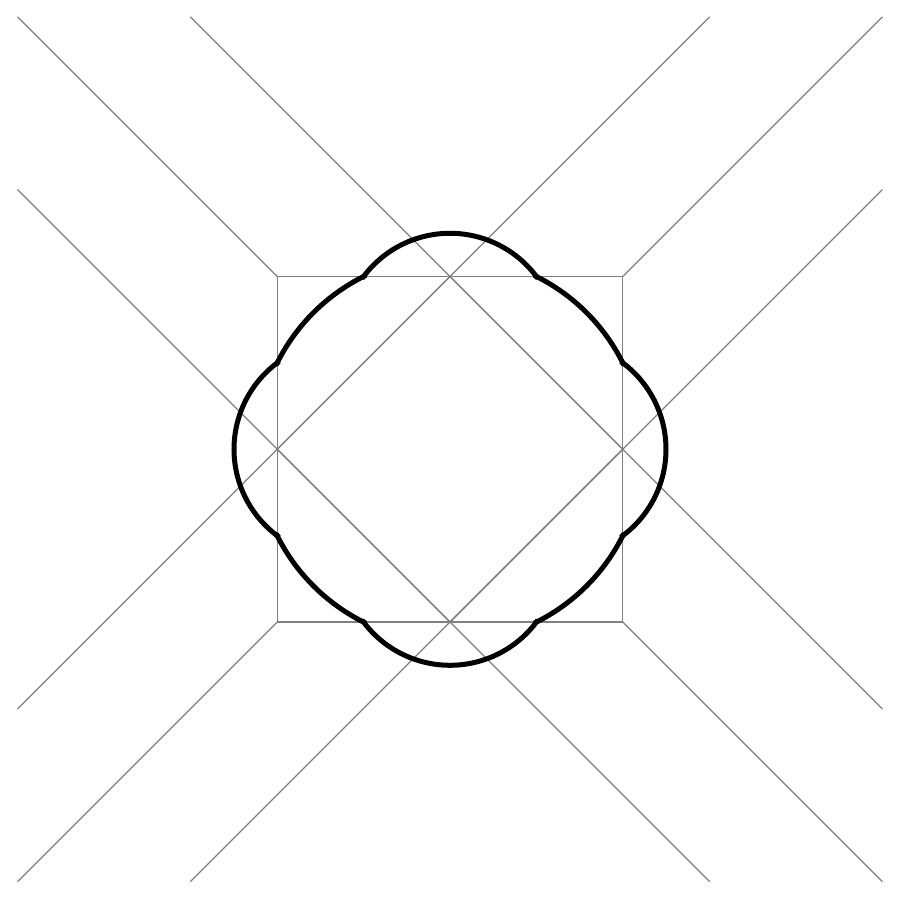}
}
\put(233,250){$\kappa = \frac{4}{5}$}
\put(70,130){
\includegraphics[scale = .4, clip = true, draft = false]{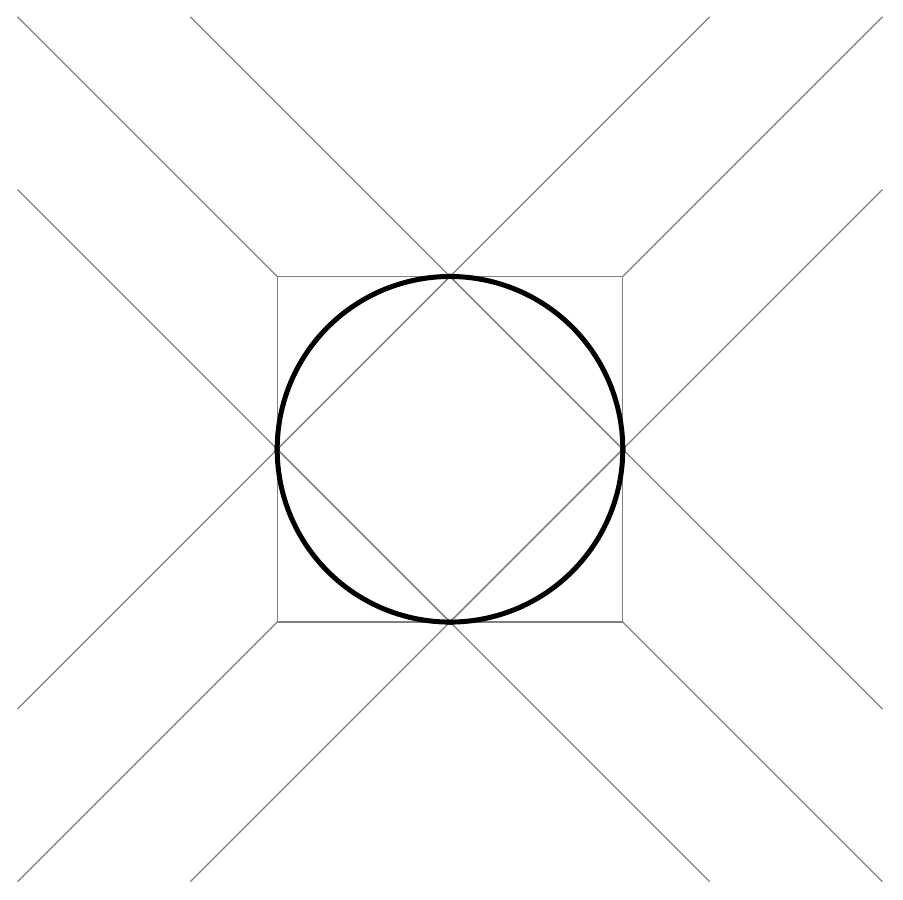}
}
\put(113,120){$\kappa = 1$}
\put(-50,10){
\includegraphics[scale = .4, clip = true, draft = false]{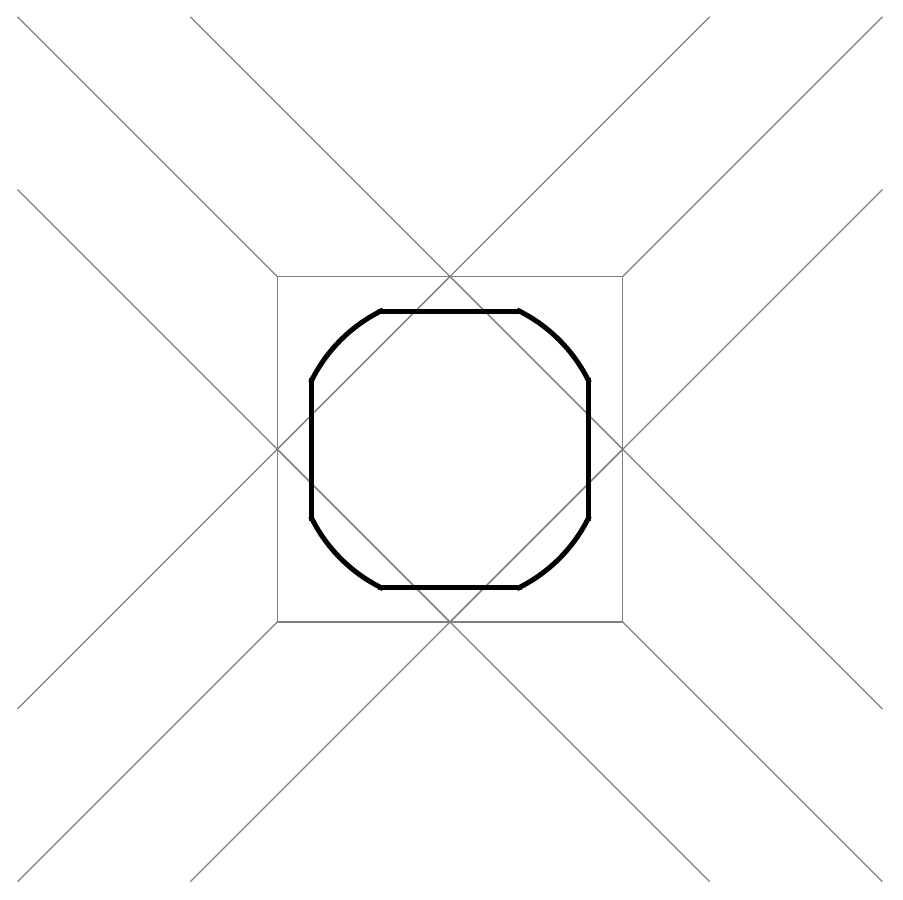}
}
\put(-7,0){$\kappa = \frac{5}{4}$}
\put(70,10){
\includegraphics[scale = .4, clip = true, draft = false]{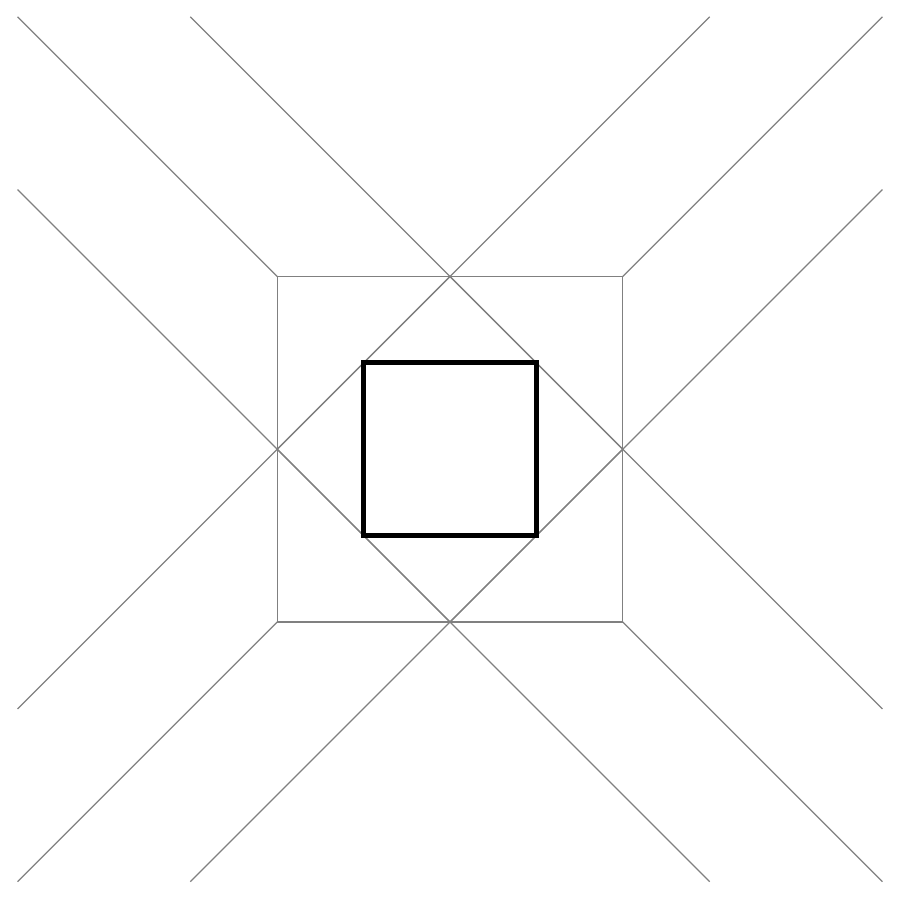}
}
\put(113,0){$\kappa = 2$}
\put(190,10){
\includegraphics[scale = .4, clip = true, draft = false]{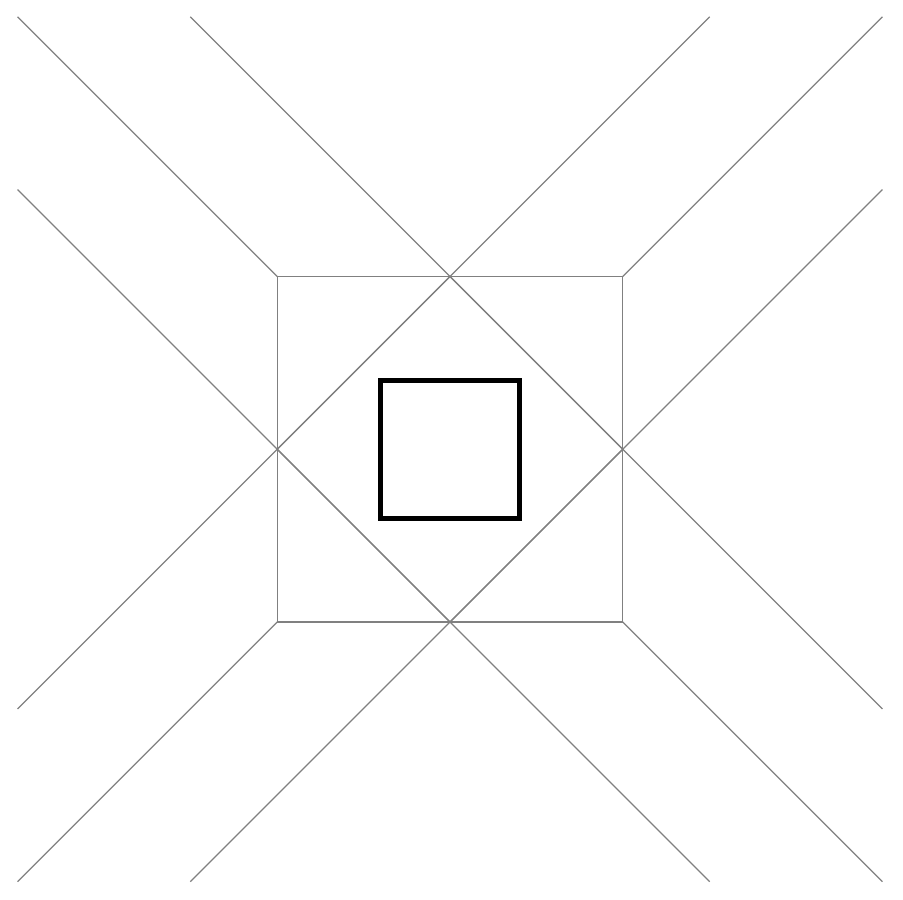}
}
\put(233,0){$\kappa = \frac{5}{2}$}
\end{picture}
\caption{When $A = a$, the parameters that lie in $\partial U_{\kappa}$ and produce parabolas are shown.  Identifying lines for $\mathscr{P}'$ and $\mathscr{L}'$ are included for reference.} \label{Aisaparabolaparametersfig}
\end{figure}

\subsection{Redundancy of conic sections}

While there is undoubtedly a wide variety conic sections that can arise by considering the many different parameters available, at least in some cases, it turns out that there is redundancy in the resulting conic sections.  The redundancies that result from taxicab isometries are certainly part of this, but they are not surprising.  There are additional redundancies that exist through inspired choices of the parameters defining the cones that prove to be more interesting.

\subsubsection{Steep defining lines}

\begin{thm} \label{steeplinethm}
Let $a = (a_1, a_2, 1),\ b = (b_1, b_2, 1) \in \mathscr{L}'$ represent steep lines.  Let $C_a = C(\ell_a, P_A, \kappa)$ and let $C_{b} = C(\ell_{b}, P_A, \kappa)$.  Then, as long as neither cone is degenerate, the resulting conic sections $C_a \cap S$ and $C_{b} \cap S$ are similar.
\end{thm}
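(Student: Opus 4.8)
The plan is to reduce both conic sections to explicit piecewise-linear equations in $S \cong \mathbb{R}^2$ and then exhibit an explicit similarity relating them. Because $\ell_a$ and $\ell_b$ are both steep, the third coordinate is dominant in each parameter ($a_3 = b_3 = 1 > |a_1|+|a_2|, |b_1|+|b_2|$), so Theorem~\ref{disttolinethm}, applied with dominant index $i = 3$, gives the clean formula $d(x, \ell_a) = d_{1,2}(x, \ell_a) = |x_1 - a_1| + |x_2 - a_2|$ for \emph{every} $x \in S$, and likewise $d(x, \ell_b) = |x_1 - b_1| + |x_2 - b_2|$. This is the key simplification: steepness collapses the distance to $\ell$ into the ordinary taxicab distance in $S$ to the single point $(a_1, a_2)$, so no wedge case-analysis is needed. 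Since the defining plane $P_A$ is common to both cones, Theorem~\ref{disttoplanethm} gives the same distance-to-plane function $d(x, P) = |A_1 x_1 + A_2 x_2 + \delta|/M$ on $S$ for both. Hence $C_a \cap S$ and $C_b \cap S$ are the zero sets in $\mathbb{R}^2$ of
\[
|x_1 - a_1| + |x_2 - a_2| = \frac{\kappa}{M}\left|A_1 x_1 + A_2 x_2 + \delta\right|
\]
and the analogous equation with $b$ in place of $a$.

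First I would translate each curve so its center sits at the origin. Writing $(u, v) = (x_1 - a_1, x_2 - a_2)$ and collecting the constant term, the equation for $C_a \cap S$ becomes
\[
|u| + |v| = \frac{\kappa}{M}\left|A_1 u + A_2 v + c_a\right|, \qquad c_a = A_1 a_1 + A_2 a_2 + \delta,
\]
and the translation by $-b$ turns $C_b \cap S$ into the same equation with the constant $c_b = A_1 b_1 + A_2 b_2 + \delta$. The nondegeneracy hypothesis is precisely $a \notin P^S$ and $b \notin P^S$, which is $c_a \neq 0$ and $c_b \neq 0$, so both constants are nonzero.

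The central step is to verify that the uniform scaling by $\mu = c_b / c_a$ carries the translated $a$-curve onto the translated $b$-curve. If $(u, v)$ satisfies the $a$-equation, then for the image $(\mu u, \mu v)$, using $\mu c_a = c_b$,
\begin{align*}
|\mu u| + |\mu v|
    &= |\mu|\left(|u| + |v|\right)
    = |\mu|\cdot \frac{\kappa}{M}\left|A_1 u + A_2 v + c_a\right| \\
    &= \frac{\kappa}{M}\left|\mu A_1 u + \mu A_2 v + \mu c_a\right|
    = \frac{\kappa}{M}\left|A_1(\mu u) + A_2(\mu v) + c_b\right|,
\end{align*}
so $(\mu u, \mu v)$ lies on the $b$-equation; scaling by $c_a/c_b$ gives the reverse inclusion, so the correspondence is a bijection. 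This computation is insensitive to the sign of $\mu$, which is the one subtlety worth flagging: when $c_a$ and $c_b$ have opposite signs the factor is negative, and I would interpret a negative dilation as the positive dilation by $|\mu|$ composed with the point reflection through the origin (rotation by $\pi$), which is still a similarity. Composing the translation by $-a$, the scaling by $\mu$, and the translation by $+b$ then produces a similarity of $\mathbb{R}^2$ mapping $C_a \cap S$ onto $C_b \cap S$, as desired.

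I do not expect a genuine obstacle; the argument is a short change of variables once steepness has been used to linearize $d(x, \ell)$ uniformly over $S$. The only care needed is the bookkeeping noted above—confirming that the $d_{1,2}$ formula holds on all of $S$ without wedge restrictions, and handling a negative scale factor as an orientation-reversing similarity rather than discarding it. It is worth remarking for interpretation that $\mu = \pm\, d(b, P)/d(a, P)$, since $c_a = M\, d(a, P)$ up to sign; thus the similarity ratio is exactly the ratio of the distances from the two line-points to the common defining plane.
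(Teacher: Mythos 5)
Your proof is correct, and it takes a genuinely different route from the paper's. The paper works at the level of vertices: it writes $v^{1\pm}$ and $v^{2\pm}$ as deviations $r_a^{i\pm}$ from $a$ using Equations~\eqref{v1altformulaeq} and \eqref{v2altformulaeq}, computes the ratio $r_a^{is}/r_b^{is} = \frac{A_1a_1+A_2a_2+\delta}{A_1b_1+A_2b_2+\delta}$, observes it is independent of $i$ and the sign, and concludes similarity from there (implicitly leaning on Theorem~\ref{sectionconstructionthm} to pass from ``the vertices are similarly placed'' to ``the sections are similar''). You instead exploit the fact that steepness makes $d(x,\ell_a)=d_{1,2}(x,\ell_a)=|x_1-a_1|+|x_2-a_2|$ hold uniformly on $S$ with no wedge analysis, and then exhibit an explicit similarity --- translate by $-a$, dilate by $\mu=c_b/c_a$ (reading a negative $\mu$ as a dilation composed with the point reflection, which is a taxicab isometry), translate by $+b$ --- that carries the entire zero set of one defining equation onto the other. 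Your version is arguably more self-contained: it does not need to know which reference lines are active, it treats every point of the section at once (including rays and vertices at infinity, which the vertex-ratio argument handles only implicitly), and it yields the pleasant interpretation of the similarity ratio as $d(b,P)/d(a,P)$. What the paper's approach buys in exchange is uniformity of method: the same deviation-ratio computation is reused, with more delicate bookkeeping, in the proof of Theorem~\ref{parallelplanethm}, where the defining planes differ and no single global change of variables is available. Both proofs identify the same essential quantity $c_a/c_b$; yours packages it as a map, the paper's as a ratio of coordinates.
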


Note that this theorem also applies for ``transitionally steep'' lines where $|a_1| + |a_2| = 1$, but unsurprisingly not other transitional lines.

\begin{proof}
For the vertices corresponding to $C_a$, Equations~\eqref{v1altformulaeq} and \eqref{v2altformulaeq} can be written
\begin{align*}
v^{1 \pm} &= a + \left(r_a^{1 \pm}, 0, 0 \right), \\
v^{2 \pm} &= a + \left(0, r_a^{2 \pm}, 0 \right) \\
\end{align*}
where for $i \in \{1, 2\}$
\[
r_a^{i \pm} = \frac{A_1 a_1 + A_2 a_2 + \delta}{\pm \frac{M}{\kappa} - A_i}.
\]
The formulas for $v^{3 \pm}$ are not necessary since these vertices are inactive when $\ell$ is steep.  The equations for the vertices corresponding to $C_b$ are similar.

Since $r_a^{i \pm}$ and $r_b^{i \pm}$ represent the deviations of the vertices from $a$ and $b$ respectively, it will be sufficient to show that the ratios of corresponding deviations are independent of $i$ or the sign choice.  Let $s \in \{+, -\}$.  Then
\begin{align*}
\frac{r_a^{i s}}{r_{b}^{i s}}
	&= \frac{\frac{A_1 a_1 + A_2 a_2 + \delta}{s \frac{M}{\kappa} - A_i}}
		{\frac{A_1 b_1 + A_2 b_2 + \delta}{s \frac{M}{\kappa} - A_i}} \\
	&= \frac{A_1 a_1 + A_2 a_2 + \delta}{A_1 b_1 + A_2 b_2 + \delta}
\end{align*}
where the cancelation can occur since $P_A$ is the same for the two cones.  This shows that the various ratios, independent of $i$ and the sign choice, are the same.
\end{proof}

\subsubsection{Defining planes with parallel intersections with $S$}

\begin{thm} \label{parallelplanethm}
Let $P^S_A$ and $P^S_B$ be parallel.  Let  $C_A = C(\ell_a, P_A, \kappa_A)$.  Then, there exists a value $\kappa_B \in (0, \infty)$ such that for  $C_B = C(\ell_a, P_B, \kappa_B)$, the resulting conic sections $C_A \cap S$ and $C_B \cap S$ are similar as long as neither cone is degenerate.
\end{thm}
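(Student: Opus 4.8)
The plan is to exhibit an explicit similarity of the slicing plane that carries $C_A \cap S$ onto $C_B \cap S$, in the spirit of the proof of Theorem~\ref{steeplinethm}. Since $P^S_A$ and $P^S_B$ are genuine parallel lines, both defining planes are non-vertical, so $\delta_A = \delta_B = 1$, and the parallelism of the normals forces $(B_1, B_2) = \lambda(A_1, A_2)$ for some $\lambda \neq 0$. (If the planes were vertical the two lines would both pass through $(0,0,1)$ and parallelism would make them coincide, forcing $P_A = P_B$ and the trivial choice $\kappa_B = \kappa_A$.) Writing $s = A_1 a_1 + A_2 a_2$ and $\eta = M_A/\kappa_A$, I would choose $\kappa_B$ so that $M_B/\kappa_B = |\lambda|\,\eta$, that is $\kappa_B = M_B \kappa_A / (|\lambda| M_A)$; geometrically this is the unique choice making the characterizing strips $Q_{A,\kappa_A}$ and $Q_{B,\kappa_B}$ coincide, which already guarantees via Theorem~\ref{coniccharacterizationthm} that the two sections are of the same type.

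The heart of the argument is a deviation computation exactly parallel to that in Theorem~\ref{steeplinethm}. By Equations~\eqref{v1altformulaeq}, \eqref{v2altformulaeq}, and~\eqref{v3altformulaeq}, each vertex of $C_A$ deviates from $a$ by a vector whose nonzero entry has numerator $s+1$ and denominator $\pm\eta - c$, where $c$ is $A_1$, $A_2$, or $s$ according to the reference line; the matching vertex of $C_B$ has numerator $\lambda s + 1$ and denominator $\pm|\lambda|\eta - \lambda c$. Since $\pm|\lambda|\eta - \lambda c = \lambda(\pm\eta - c)$ when $\lambda>0$, and $\lambda(\mp\eta - c)$ when $\lambda<0$ (the latter merely swapping the two sign choices), every $B$-deviation equals the single constant $\nu = (\lambda s + 1)/(\lambda(s+1))$ times the corresponding $A$-deviation. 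Non-degeneracy of the two cones says exactly that $s + 1 \neq 0$ and $\lambda s + 1 \neq 0$, so $\nu$ is finite and nonzero.

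This identifies the candidate similarity as the homothety $H(x) = a + \nu(x - a)$ centered at $a$ (a dilation, together with a point reflection through $a$ when $\nu<0$), and the computation above shows $H$ sends the vertex set of $C_A \cap S$ bijectively onto that of $C_B \cap S$. To promote this from vertices to whole sections I would invoke Theorem~\ref{sectionconstructionthm}: the active reference lines are determined by the shared line $\ell_a$, so they agree for the two cones, and each passes through $a$ and is therefore preserved by $H$. The remaining consistency check — and the step I expect to be the crux — is that $H$ reconciles the two \emph{different} dividing lines, namely $H(P^S_A) = P^S_B$. A direct substitution shows that for $x$ with $A_1 x_1 + A_2 x_2 = -1$ the image $y = a + \nu(x-a)$ satisfies $A_1 y_1 + A_2 y_2 = s(1-\nu) - \nu = -1/\lambda$, which is precisely the equation of $P^S_B = \{\lambda A_1 x_1 + \lambda A_2 x_2 + 1 = 0\}$.

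With $H$ carrying the reference lines to themselves and $P^S_A$ to $P^S_B$, it preserves the ``same side of $P^S$'' relation and hence the adjacency and anti-adjacency of vertices; by Theorem~\ref{sectionconstructionthm} the associated segments, complementary rays, and infinite rays of $C_A \cap S$ map onto those of $C_B \cap S$, giving $H(C_A \cap S) = C_B \cap S$ and the claimed similarity. The case in which $\ell_a$ is horizontal is not covered by these vertex formulas: there only $\rho^3$ is active and the section is reconstructed from the vertex--auxiliary-point parallelogram of the previous subsection, so I would instead produce the similarity by matching those four points, or appeal to the limiting description of the horizontal case noted after Theorem~\ref{horizontalconiccharacterizationthm}. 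The scaling of the vertices themselves is forced by the choice of $\kappa_B$; the only genuinely delicate point is reconciling the connect-the-dots data across the two distinct lines $P^S_A$ and $P^S_B$, which is exactly what the identity $H(P^S_A) = P^S_B$ resolves.
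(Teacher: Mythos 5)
Your proposal is correct in substance and rests on the same computational core as the paper's proof --- comparing the deviations $r^{i\pm}$ of corresponding vertices from $a$ and showing that all six ratios equal a single constant $\nu = (\lambda s + 1)/(\lambda(s+1))$ --- but it is organized differently in two ways worth recording. First, your single choice $\kappa_B = M_B\kappa_A/(|\lambda| M_A)$, characterized by $M_B/\kappa_B = |\lambda| M_A/\kappa_A$ (equivalently, by making the characterizing strips coincide), subsumes the paper's case analysis: the paper sets $\kappa_B = \kappa_A$ when both planes are steep or transitional, sets $\kappa_B = |A_1/B_1|\,\kappa_A$ when both are shallow or transitional, and then must invoke transitivity of similarity through an intermediate transitional plane to cover a steep--shallow pair; your formula handles every combination in one computation. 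Second, the paper stops at ``all six ratios are equal, hence the sections are similar,'' whereas you exhibit the similarity explicitly as the homothety $H(x) = a + \nu(x-a)$ and verify $H(P^S_A) = P^S_B$. Since adjacency and anti-adjacency of vertices are defined relative to $P^S$, and the two cones have distinct lines $P^S_A \neq P^S_B$, this verification is precisely what is needed to promote the correspondence of vertices to the full connect-the-dots data of Theorem~\ref{sectionconstructionthm}; you correctly identify this as the crux, and it is a genuine strengthening of the argument as written.

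One inaccuracy to repair: the claim that parallelism of $P^S_A$ and $P^S_B$ forces both planes to be non-vertical is false. A vertical plane ($\delta = 0$) still meets $S$ in the line $\{A_1x_1 + A_2x_2 = 0\}$, so a vertical plane and a non-vertical plane can have parallel traces; your parenthetical disposes only of the case where both are vertical. The paper avoids this by carrying $\delta_A$ and $\delta_B$ as possibly distinct values in $\{0,1\}$ throughout. Your computation survives the same generalization --- the numerators become $s + \delta_A$ and $\lambda s + \delta_B$, whose non-vanishing is exactly non-degeneracy of the two cones, and the ratio $\frac{\lambda s + \delta_B}{\lambda(s+\delta_A)}$ is still independent of the reference line and sign choice --- so this is a one-line fix rather than a structural problem. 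Your treatment of horizontal $\ell$ is only a sketch, but the paper's own proof silently assumes $\ell$ non-horizontal as well, so you are not behind the paper on that point.
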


The proof of this result is similar to the proof of Theorem~\ref{steeplinethm}, but the analysis is more delicate.

\begin{proof}
Implicit in the statement of the theorem is the fact that neither $P_A$ nor $P_B$ are horizontal.  As such, at least one of $A_1$ and $A_2$ is nonzero, and similarly at least one of $B_1$ and $B_2$ is nonzero.  Furthermore, the statement that $P^S_A \| P^S_B$ is equivalent to the statement that
\begin{equation*}
A_1 B_2 = A_2 B_1.
\end{equation*}
Without loss of generality, suppose $A_1 \neq 0$.  Then the above conditions imply that $B_1$ is also nonzero and
\begin{equation} \label{parallelconditioneqn}
\frac{A_1}{B_1} = \frac{A_2}{B_2}
\end{equation}
as long as both are defined.

For $C_A$, Equations~\eqref{v1altformulaeq}, \eqref{v2altformulaeq}, and \eqref{v3altformulaeq} can be written
\begin{align*}
v^{1 \pm} &= a + \left(r_A^{1 \pm}, 0, 0 \right), \\
v^{2 \pm} &= a + \left(0, r_A^{2 \pm}, 0 \right), \\
v^{3 \pm} &= a + r_A^{3 \pm} (a_1, a_2, 0)
\end{align*}
where for $i \in \{1, 2\}$
\[
r_A^{i \pm} = \frac{A_1 a_1 + A_2 a_2 + \delta_A}{\pm \frac{M_A}{\kappa_A} - A_i}
\]
and
\[
r_A^{3 \pm} = \frac{A_1 a_1 + A_2 a_2 + \delta_A}{\pm \frac{M_A}{\kappa_A} - (A_1 a_1 + A_2 a_2)}.
\]
The equations for $C_B$ are similar.

Since $r_A^{i \pm}$ and $r_B^{i \pm}$ represent the deviations of the vertices from $a$, it will be sufficient to show that the ratios of corresponding deviations are independent of $i$ or the sign choice.  Unlike the proof for Theorem~\ref{steeplinethm}, the question of which vertices correspond to one another is more subtle.  The indices of corresponding vertices will match, but the signs may not.

Consider two cases.  First, if both $P_A$ and $P_B$ are steep or transitional then
\begin{align*}
M_B &= \max\{|B_1|, |B_2|, \delta_A\} \\
	&= \max\{|B_1|, |B_2|\} \\
	&= \left|\frac{B_1}{A_1}\right| \max\{|A_1|, |A_2|\} \\
	&= \left|\frac{B_1}{A_1}\right| \max\{|A_1|, |A_2|, \delta_A\} \\
	&= \left|\frac{B_1}{A_1}\right| M_A
\end{align*}
where the third equality follows from Equation~\eqref{parallelconditioneqn}.  Let $f = \frac{\left|\frac{B_1}{A_1}\right|}{\frac{B_1}{A_1}}$ and, noting that $f \in \{1, -1\}$, let $s_A \in \{+, -\}$ and let
\[
s_B =
	\begin{cases}
	s_A &\ \mathrm{if}\ f = 1, \\
	\mathrm{opposite\ of\ }s_A &\ \mathrm{if}\ f = -1.
	\end{cases}
\]

For this case, let $\kappa_B = \kappa_A$.  Then for $i \in \{1, 2\}$
\begin{align*}
\frac{r_A^{i s_{A}}}{r_B^{i s_{B}}}
	&= \frac{\frac{A_1 a_1 + A_2 a_2 + \delta_A}{s_{A} \frac{M_A}{\kappa_A} - A_i}}
		{\frac{B_1 a_1 + B_2 a_2+ \delta_B}{s_{B} \frac{M_B}{\kappa_B} - B_i}} \\
	&= \left( \frac{A_1 a_1 + A_2 a_2 + \delta_A}{B_1 a_1 + B_2 a_2+ \delta_B} \right)
		\left( \frac{s_{B} M_B - B_i \kappa_B}{s_{A} M_A - A_i \kappa_A} \right)
		\frac{\kappa_A}{\kappa_B} \\
	&= \left( \frac{A_1 a_1 + A_2 a_2 + \delta_A}{B_1 a_1 + B_2 a_2+ \delta_B} \right)
		\left( \frac{s_{B} \left|\frac{B_1}{A_1}\right| M_A - \frac{B_1}{A_1} A_i \kappa_A}
			{s_{A} M_A - A_i \kappa_A} \right) \\
	&= \left( \frac{A_1 a_1 + A_2 a_2 + \delta_A}{B_1 a_1 + B_2 a_2+ \delta_B} \right)
		\left( \frac{s_{B} (f M_A)- A_i \kappa_A}
			{s_{A} M_A - A_i \kappa_A} \right) \frac{B_1}{A_1} \\
	&= \left( \frac{A_1 a_1 + A_2 a_2 + \delta_A}{B_1 a_1 + B_2 a_2+ \delta_B} \right)
		\left( \frac{s_{A} M_A - A_i \kappa_A}
			{s_{A} M_A - A_i \kappa_A} \right) \frac{B_1}{A_1} \\
	&= \left( \frac{A_1 a_1 + A_2 a_2 + \delta_A}{B_1 a_1 + B_2 a_2+ \delta_B} \right)
		\frac{B_1}{A_1}	
\end{align*}
which is independent of $i$ and the signs $s_{A}$ and $s_{B}$.

The calculation for $\frac{r_A^{3 s_A}}{r_B^{3 s_B}}$ is similar, resulting in
\[
\frac{r_A^{3 s_{A}}}{r_B^{3 s_{B}}}
	= \left( \frac{A_1 a_1 + A_2 a_2 + \delta_A}{B_1 a_1 + B_2 a_2+ \delta_B} \right)
		\frac{B_1}{A_1}	.
\]

Since all six ratios result in the same quantity, the resulting conic sections must be similar.

For the second case, if both $P_A$ and $P_B$ are shallow or transitional then $M_A = M_B = 1$.  For this case, let
$\kappa_B = \left| \frac{A_1}{B_1} \right| \kappa_A$.  Then for $i \in \{1, 2\}$
\begin{align*}
\frac{r_A^{i s_{A}}}{r_B^{i s_{B}}}
	&= \frac{\frac{A_1 a_1 + A_2 a_2 + \delta_A}{s_{A} \frac{M_A}{\kappa_A} - A_i}}
		{\frac{B_1 a_1 + B_2 a_2+ \delta_B}{s_{B} \frac{M_B}{\kappa_B} - B_i}} \\
	&= \left( \frac{A_1 a_1 + A_2 a_2 + \delta_A}{B_1 a_1 + B_2 a_2+ \delta_B} \right)
		\left( \frac{s_{B} \frac{1}{\kappa_B} - B_i}{s_{A} \frac{1}{\kappa_A} - A_i} \right) \\
	&= \left( \frac{A_1 a_1 + A_2 a_2 + \delta_A}{B_1 a_1 + B_2 a_2+ \delta_B} \right)
		\left( \frac{s_{B}  \left| \frac{B_1}{A_1} \right| \frac{1}{\kappa_A} - \frac{B_1}{A_1} A_i}
			{s_{A} \frac{1}{\kappa_A} - A_i} \right) \\
	&= \left( \frac{A_1 a_1 + A_2 a_2 + \delta_A}{B_1 a_1 + B_2 a_2+ \delta_B} \right)
		\left( \frac{s_{B} f \frac{1}{\kappa_A} - A_i}{s_{A} \frac{1}{\kappa_A} - A_i} \right)
			\frac{B_1}{A_1} \\
	&= \left( \frac{A_1 a_1 + A_2 a_2 + \delta_A}{B_1 a_1 + B_2 a_2+ \delta_B} \right)
		\left( \frac{s_{A} \frac{1}{\kappa_A} - A_i}{s_{A} \frac{1}{\kappa_A} - A_i} \right)
			\frac{B_1}{A_1} \\
	&= \left( \frac{A_1 a_1 + A_2 a_2 + \delta_A}{B_1 a_1 + B_2 a_2+ \delta_B} \right)
			\frac{B_1}{A_1}
\end{align*}		
which is independent of $i$ and the signs $s_{A}$ and $s_{B}$.

The calculation for $\frac{r_A^{3 s_{A}}}{r_B^{3 s_{B}}}$ is similar, resulting in:
\[
\frac{r_A^{3 s_{A}}}{r_B^{3 s_{B}}}
	= \left( \frac{A_1 a_1 + A_2 a_2 + \delta_A}{B_1 a_1 + B_2 a_2+ \delta_B} \right)
				\frac{B_1}{A_1}.
\]

Again, since all six ratios result in the same quantity, the resulting conic sections must be similar.

To complete the proof, note that similarity is transitive and both cases above include transitional planes, at least one of which is not degenerate.
\end{proof}

Note that the quantity in the proof to which all the ratios are equal can be positive or negative.  This corresponds to the fact that one of the similar conic sections may be rotated by $\pi$ relative to the other.

Based on this result, we could significantly reduce the set of parameters we use to identify the defining planes.  One choice could be to use only vertical planes, except for the one degenerate case necessitating an equivalent plane.  Another choice could be to use only transitional planes.  In this case, the degenerate case would be covered by the other equivalent transitional plane.  Regardless of the choice made, we would also want to include the one horizontal plane.  Finally, it is worth noting that avoiding such a restriction and allowing for the wider variety of planes is still useful, especially in light of Section~\ref{lineplaneperpsubsec}.

\subsection{Finding ``traditional'' taxicab conic sections}

Given the wide variety of conic sections found here as slices of cones, it is illuminating to consider how they relate to the more traditional taxicab conic sections found using the two-foci or focus-directrix definitions as discussed for example in \cite{KAGO}.

We find that almost none of the conic sections defined using the two-focus definition, as indicated in the first and last rows of Figure~\ref{distanceconicsfig}, appear among conic sections defined as slices of cones.  The only exception is that if  $P$ is horizontal and $\ell$ is steep, the resulting section is a circle.  Two-foci ellipses other than circles, and two-foci hyperbolas do not appear in the our slice-formulation.  This can most easily be seen by noting that, for the most part, the vertices of two-foci conic sections do not all lie on valid reference lines, or if they do, the resulting segments cannot stem from lines passing through auxiliary points on $P^S$.

On the other hand, all of the conic sections defined using the focus-directrix definition appear as slices of cones, arising specifically when both $P$ and $\ell$ are steep.  In this case, for $x \in S$,
\[
d(x, P) = d(x, P^S) = d_S(x, P^S)
\]
and
\[
d(x, \ell) = d_{1, 2}(x, \ell) = d(x, \ell \cap S) = d_S(x, \ell \cap S)
\]
where $d_S$ is the 2-dimensional taxicab distance on $S$.  As such, the formula for the set of points in $S$ satisfying $d(x, \ell) = \kappa\, d(x, P)$ reduces to the focus-directrix definition.

\subsubsection{Near misses}
When $\ell$ is not steep, some of the resulting conic sections can be qualitatively similar to conic sections arising from the focus-directrix definition, but they tend to be near misses.  For example, if $A = (1, 4, 1)$, $a = (2, 0, 1)$, and $\kappa = 1$, the resulting conic section $C \cap S$ is the parabola shown in Figure~\ref{parabolafig}(a).  The parabola in $S$ resulting from the focus $a$ and directrix $P^S$ using the focus-directix definition is shown in Figure~\ref{parabolafig}(b).

\begin{figure}
\begin{picture}(310,140)
\put(0,10){
\includegraphics[scale = .5, clip = true, draft = false]{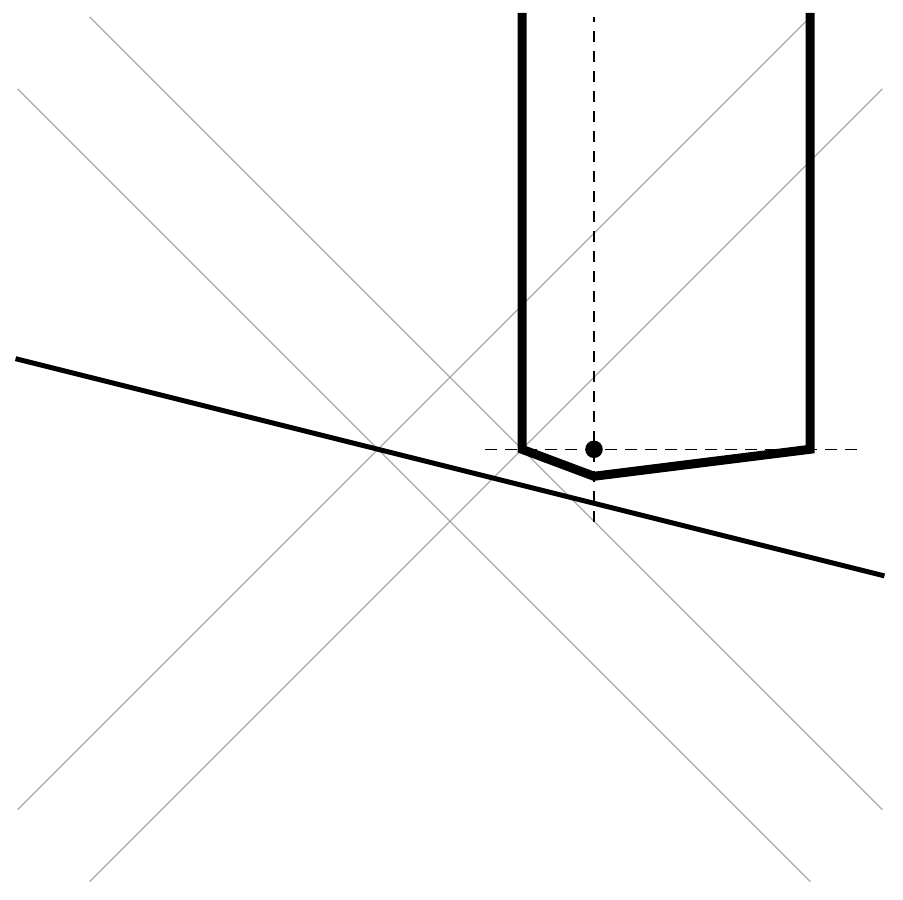}
}
\put(175,10){
\includegraphics[scale = .5, clip = true, draft = false]{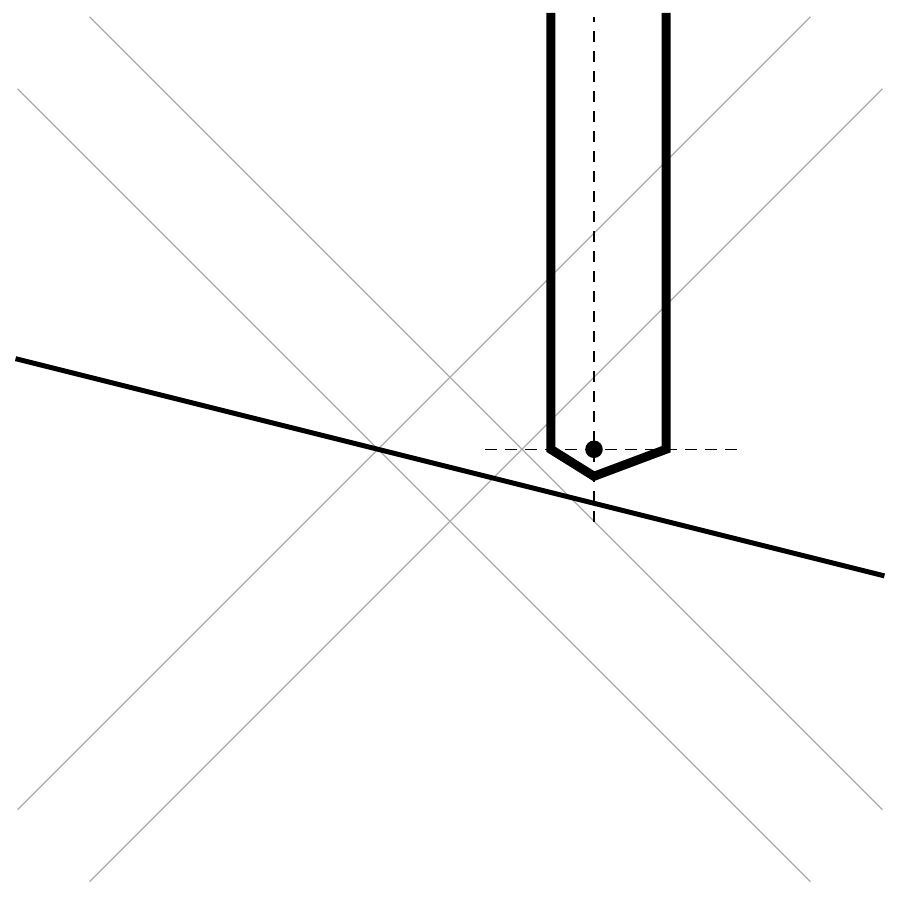}
}
\put(4,75){\scriptsize$P^S$}
\put(92,77){\scriptsize$a$}
\put(179,75){\scriptsize$P^S$}
\put(267,77){\scriptsize$a$}
\put(65,0){(a)}
\put(240,0){(b)}
\end{picture}
\caption{Parabolas using the same data, but different definitions.  In (a), the parabola is the result of slicing a cone.  In (b), the parabola arises from the focus-directrix definition.} \label{parabolafig}
\end{figure}

In fact, the cone-slice parabola in Figure~\ref{parabolafig}(a) cannot be a focus-directrix parabola for any focus or directrix.  We leave it as an exercise for the reader to show that for any parabola defined using the focus-directrix definition, if the parallel edges are in the $x_2$ direction, the slopes of the other two edges differ by 1.  In the example here, and in general if $\ell$ is not steep, the corresponding parabolas have slopes of corresponding edges that do not differ by 1.

\section{Final thoughts and next steps} \label{nextstepssection}

As we have seen, defining conic sections in terms of slicing cones as we have done here has proven surprisingly fruitful.  Not only have we broadened the types of objects that arise and developed an understanding of their characteristics, but we have also developed a deeper understanding of taxicab space itself.  In this process, a number of opportunities for further exploration have presented themselves.  We outline a few such opportunities here, but there are surely others that the authors have not considered.

In this paper, we show what arises when slicing a cone defined by certain parameters.  The inverse problem would be a natural follow-up to this:  given a collection of segments and rays that qualitatively resembles a conic section, when is it actually a conic section and what are the parameters that produce it?  There are a number of necessary conditions, such as the requirement that the vertices must lie on appropriate reference lines and then there are many cases to consider owing to the wide variety of defining lines and planes that are available.  The resulting theorem is not likely to be very pretty, but a complete result would further characterize conic sections beyond what has been established here.

Since conic sections are polygonal in nature, interesting connections to finite geometries may exist.  For example when $\ell$ is not intermediate, the four vertices, two reference lines, four edge lines, two auxiliary points, and $a$ form most of a Fano plane.  The last line would be $P^S$ if we defined $a$ to lie on $P^S$ as well.  In light of the symmetries inherent in the Fano plane, could it be that there are associated families of conic sections resulting from shifting the roles of the various points and lines?

One theme running through much of the analysis in this paper relates to variations on taxicab distance, or equivalently, taxicab distance in alternate coordinate systems.  This issue first appears when considering the induced metric on oblique planes, but also manifests itself when considering the partial distances used to compute the distance between a point and a line.  A more complete exploration of these alternative taxicab distances is warranted.

One application would be another way to characterize our conic sections by proving that they are the boundary of a union of simpler shapes which should themselves have boundaries that are conic sections for alternative taxicab distances.  This way of constructing conic sections would be quite similar to the construction method for Apollonian sets found in \cite{BCFHMNSTV}.

Also, in Theorem~\ref{auxpointthm}, the geometric significance of $\beta = 0$ is established in the proof:  it is the equation for $P^S$.  On the other hand, the geometric significance of $\alpha = 0$ is not discussed.  For $d_{1,2}$ they are the equations for the guidelines through $a$, as defined in \cite{BCFHMNSTV}.  In the other two cases, they could be the guidelines for the 2-D taxicab metric corresponding to the given partial distance.


\bibliographystyle{amsalpha}
\bibliography{taxicab-conics}

\end{document}